\definecolor{mysoftblue}{hsb}{0.55,0.15,0.9}
\definecolor{mysoftpurple}{hsb}{0.9,0.25,0.9}
\definecolor{mysoftorange}{hsb}{0.15,0.3,0.95}
\newtheorem{lemma}{Lemma}[section]
\newtheorem{theorem}{Theorem}[section]
\newtheorem{corollary}{Corollary}[section]
\newtheorem{definition}{Definition}[section]
\numberwithin{equation}{section} \numberwithin{theorem}{section}
\numberwithin{example}{section} \numberwithin{remark}{section}
\numberwithin{figure}{section} \numberwithin{algorithm}{section}
\begin{document}
\title[Phase transitions in two-component Bose-Einstein condensates (II)]{Phase transitions in two-component Bose-Einstein condensates with Rabi frequency (II): The De Giorgi conjecture for the nonlocal problem in $\mathbb{R}^{2}$ or $\mathbb{R}^{3}$}
\author{Leyun Wu}
\address{School of Mathematics, South China University of Technology, Guangzhou, 510640, P. R. China}\email{leyunwu@scut.edu.cn}
\author{Chilin Zhang}
\address{School of Mathematical Sciences, Fudan University, Shanghai 200433, P. R. China}\email{zhangchilin@fudan.edu.cn}
%\date{\today}

\begin{abstract}
    In this series of papers, we investigate coupled systems arising in the study of two-component Bose-Einstein condensates, and we establish classification results for solutions of De Giorgi conjecture type.
    
    In the present (second) paper of the series, we focus on the nonlocal problem of the form
    \begin{equation*}
        \left\{\begin{aligned}
        (-\Delta)^{s}u+u(u^{2}+v^{2}-1)+v(\alpha uv-\omega)=0,\\
        (-\Delta)^{s}v+v(u^{2}+v^{2}-1)+u(\alpha uv-\omega)=0,
    \end{aligned}
    \right.
    \end{equation*}
    which models the stationary states of Rabi-coupled condensates with inter- and intra-species interactions. 
    
We prove that for $\frac{1}{2}\le s<1$, any positive entire solution $(u,v)$ in $\mathbb{R}^3$ satisfying the monotonicity condition $\partial_{x_3}u>0>\partial_{x_3}v$ must be one-dimensional. Moreover, when $0<s<\frac{1}{2}$, the same conclusion holds for monotone solutions in $\mathbb{R}^2$.

Our work generalizes classical De Giorgi-type theorems to a new class of nonlocal coupled systems and, to the best of our knowledge, presents the first Liouville-type classification of monotone solutions for Rabi-coupled fractional Bose-Einstein condensates, with particular emphasis on fractional Gross-Pitaevskii models.

\end{abstract}

\maketitle
\noindent{\bf Keywords.} Bose-Einstein condensates, phase transitions, Ginzburg-Landau theory, De Giorgi conjecture, coupled elliptic systems, Liouville theorem.
\\
2020 {\bf MSC.} 35Q56, 82B26, 35J47, 35B53.
\section{Introduction}

The celebrated \emph{De Giorgi conjecture}, originally proposed in \cite{DeGiorgi1979}, asserts that any global solution to
\begin{equation*}
\Delta u = u^{3} - u \quad \text{in } \mathbb{R}^{n},
\end{equation*}
that satisfies the monotonicity condition $\frac{\partial u}{\partial x_{n}} > 0$, is necessarily one-dimensional, at least in sufficiently low dimensions.
This conjecture was affirmatively resolved in dimensions $n \leq 3$ by \cite{GG98,AC00}, whereas a counterexample in $\mathbb{R}^{9}$ was later constructed in \cite{dPKW11}.
For the intermediate range $4 \leq n \leq 8$, the conjecture remains open; however, several partial results have been established under additional structural or symmetry hypotheses, see for instance, \cite{GG03,S09}.

\subsection{The Allen-Cahn equation and minimal surfaces}

The Allen--Cahn equation,
\begin{equation*}
    (-\Delta)^{s} u = u^{3} - u,
\end{equation*}
plays a central role in the study of phase transitions and serves as a prototype connecting reaction--diffusion equations with the theory of minimal surfaces. 
In our previous work \cite{WZ25}, we showed that in the special case $\alpha = 2$, the coupled Gross--Pitaevskii system
\begin{equation*}
    \left\{\begin{aligned}
        \Delta u=u(u^{2}+v^{2}-1)+v(\alpha uv-\omega),\\
        \Delta v=v(u^{2}+v^{2}-1)+u(\alpha uv-\omega),
    \end{aligned}
    \right.\quad u,v>0.
\end{equation*}
can be decoupled so that $u + v$ is constant, and the difference $u - v$ satisfies the single Allen--Cahn equation after a suitable rescaling. 
This observation strongly suggests that this system inherits structural properties analogous to those of the Allen--Cahn equation.

The Allen--Cahn equation can be interpreted as the Euler--Lagrange equation of the classical Ginzburg--Landau energy
\begin{equation*}
    J(u,\Omega) = \int_{\Omega} \Big\{ \frac{|\nabla u|^{2}}{2} + W(u) \Big\} \, dx,
    \qquad W(u) = \frac{1}{4}(1-u^{2})^{2}\chi_{[-1,1]}.
\end{equation*}
The constant functions $u \equiv \pm 1$ 
represent two stable equilibria, while the most interesting solutions are the phase-transition profiles that connect these states through a narrow interfacial region, commonly referred to as the \emph{transition layer}.
It is well known that energy-minimizing phase transition solutions are intimately connected to minimal surfaces. 
Indeed, considering the rescaled functions $u_{R}(y) = u(Ry)$, one obtains the rescaled Ginzburg-Landau functional
\begin{equation*}
    J_{R}(u_{R},\Omega) = \int_{\Omega} \Big\{ \frac{|\nabla u_{R}|^{2}}{2R} + R W(u_{R}) \Big\} \, dy.
\end{equation*}

Modica and Mortola \cite{M79,MM80} proved that as $R \to \infty$, the sequence of energies $J_{R}$ $\Gamma$-converges to the perimeter functional for Caccioppoli sets. 
Consequently, a subsequence of $u_{R}$ converges (in the weak $BV$ and strong $L^{1}_{\mathrm{loc}}$ sense) to
\begin{equation*}
    u_{\infty} = \chi_{E} - \chi_{E^{c}},
\end{equation*}
where $E$ is a Caccioppoli set of minimal perimeter.

What is equally important is that, as long as we assume $u(0)=0$, the boundary of the limiting Caccioppoli set $E$ necessarily passes through the origin. This follows from a density estimate due to Caffarelli and C\'ordoba~\cite{CC95}, which asserts that the volumes of the positive and negative sets of $u(x)$ are comparable. Combining the $\Gamma$-convergence result with this density estimate, and invoking the Bernstein theorem for minimal surfaces established by Simons~\cite{S68}, we conclude that the zero level set of a global energy minimizer $u(x)$ in $\mathbb{R}^{n}$ must be asymptotically flat, provided that $n \geq 7$.

For such a single equation, a number of important results have been established concerning the classification of global solutions. In \cite{GG98}, Ghoussoub and Gui proved De Giorgi's conjecture in $\mathbb{R}^{2}$, and later Ambrosio and Cabr\'e~\cite{AC00} confirmed it in $\mathbb{R}^{3}$. In dimensions $\mathbb{R}^{4}$ through $\mathbb{R}^{8}$, the validity of De Giorgi's conjecture remains open, except under certain additional assumptions. For instance, Ghoussoub and Gui~\cite{GG03} obtained the flatness of solutions under an additional odd-symmetry condition. Moreover, Savin~\cite{S09} proved De Giorgi's conjecture in $\mathbb{R}^{8}$, assuming that the zero level set is an entire graph. In fact, the asymptotic flatness of the level sets implies the flatness of the global solution itself.

De Giorgi's conjecture exhibits a dimensional limitation. In particular, del Pino, Kowalczyk, and Wei~\cite{dPKW11} constructed a non-trivial global solution in $\mathbb{R}^{9}$. Their construction is based on the Simons cone~\cite{S68}, the most well-known non-flat minimal graph, which occurs only in high dimensions.

\subsection{The fractional De Giorgi conjecture}

In recent years, considerable attention has been devoted to the fractional counterpart of De Giorgi's conjecture, in which the classical Laplacian in the Allen-Cahn equation is replaced by the fractional Laplacian. More precisely, one considers bounded entire solutions of
\begin{equation}\label{eq:fractional-AC}
    (-\Delta)^{s} u = u - u^{3} \quad \text{in } \mathbb{R}^{n}, 
    \qquad 0<s<1,
\end{equation}
satisfying the monotonicity condition
\[
    \frac{\partial u}{\partial x_{n}} > 0 \quad \text{in } \mathbb{R}^{n}.
\]
It is natural to ask a parallel question as in the local case, i.e.: the \emph{fractional De Giorgi conjecture}. In particular, one might conjecture that for small dimensions $n$'s, any such solution in $\mathbb{R}^{n}$ must be one-dimensional.

However, we do not yet have a definite dimensional obstruction when raising the conjecture for the nonlocal De Giorgi conjecture. The main reason lies in the $\Gamma$-convergence issue. In fact, it was shown in Savin-Valdinoci \cite{SV12} that the $\Gamma$-limit differs fundamentally between the case $s\in(0,\frac{1}{2})$ and the case $s\in[\frac{1}{2},1)$. In the latter case, the $\Gamma$-limit is the set characteristic function of a set with minimal perimeter, which is exactly the same limit as in the local case. Not surprisingly, it has been shown by Savin in \cite{S18a,S18b} that the fractional De Giorgi conjecture holds true in $\mathbb{R}^{8}$ in this case for $s\geq\frac{1}{2}$, given that the zero set is an entire graph. On the other hand, in the former case, the $\Gamma$-limit is the characteristic function of a set with fractional perimeter, or simply put, the nonlocal minimal surface that was previously systematically investigated in Caffarelli-Roquejoffre-Savin \cite{CRS10}. It was shown in \cite{CabreCinti2014CVPDE,CS14,DSV20} that the fractional De Giorgi conjecture holds in $\mathbb{R}^{2}$ when $s<\frac{1}{2}$. This result coincides with the classification result in \cite{SV13} for fractional minimal cones in $\mathbb{R}^{2}$.

% This conjecture has been  investigated by  Cabr\'e and Sire~\cite{CS14},  who proved the conjecture in dimension $n=3$ for $s \geq 1/2$. 
% Further advances were obtained by Cabr\'{e} and
% Cinti \cite{CabreCinti2014CVPDE}, who confirmed one-dimensional symmetry in
% dimension $n=3$ for all $0<s<1$.
% More recent works have extended these results to higher dimensions under additional assumptions, such as stability, minimality, or natural energy bounds~\cite{cabresireAIHP2014,SV13}, highlighting the continued progress and challenges in understanding the fractional analogue of De Giorgi's conjecture.

On the other hand, the general case remains open for $4 \leq n \leq 8$ in the absence of additional hypotheses, while counterexamples are expected for $n \geq 9$, in analogy with the local theory. This line of research bridges the study of nonlocal operators with rigidity phenomena in elliptic PDEs, and has spurred deep investigations into fractional Allen-Cahn equations, nonlocal minimal surfaces, and phase transition models.

In this way, the fractional De Giorgi conjecture links the theory of nonlocal operators with classical rigidity results, extending the geometric and analytic insights of De Giorgi's original problem to a broader nonlocal framework.

\subsection{The Gross-Pitaevskii system and our contribution}

In the first paper \cite{WZ25} of this series, we have extended the De Giorgi conjecture to the following local Gross-Pitaevskii equation:
\begin{equation*}
    \left\{\begin{aligned}
        \Delta u=u(u^{2}+v^{2}-1)+v(\alpha uv-\omega),\\
        \Delta v=v(u^{2}+v^{2}-1)+u(\alpha uv-\omega),
    \end{aligned}
    \right.\quad u,v>0.
\end{equation*}
We  proved that monotonic global solutions in $\mathbb{R}^{3}$ to such a system must be one-dimensional. 

The system arises naturally in the physics of two-component Bose-Einstein condensates (BECs) exhibiting partial phase transition. It is derived from the Gross-Pitaevskii energy functional describing two-component condensates with both intra- and interspecies interactions. In contrast to the classical two-component segregated BEC models, the present system couples the two equations both linearly and nonlinearly, due to the spin coupling of the hyperfine states in addition to the intercomponent interaction. Here $u$ and $v$ represent the wave functions of the two components, $\alpha>0$ denotes the interaction parameter, and $\omega$ is the Rabi frequency corresponding to the one-body coupling between the two internal states. Such systems have attracted considerable attention in mathematical physics and PDE theory, since they exhibit rich phenomena such as phase separation, symmetry breaking, and interface dynamics \cite{AmbrosettiColorado2007,BaoCai2013, WeiWeth2010}.

In the one-dimensional case ($n=1$), the system was analyzed in \cite{AftalionSourdis2019}, where the existence and asymptotic behavior of domain wall solutions were established under different regimes of the parameter $a$. This analysis revealed the heteroclinic structure of the problem and its connection to the minimization of the Gross-Pitaevskii energy, providing a natural motivation to study monotonicity and uniqueness of solutions in higher dimensions. From a physical perspective, understanding the qualitative properties of \eqref{eq. main} constitutes a first step toward the mathematical analysis of vortex patterns in Rabi-coupled condensates, particularly the so-called multidimer bound states~\cite{AftalionMason2016,CiprianiNitta2013,KobayashiNitta2014}. These are molecular-type states formed by the binding of vortices from different components, which then interact to generate a variety of complex lattice configurations, such as honeycomb, triangular, or square patterns. The possibility of non-segregated states at infinity with positive limits underlies the formation of such rich structures.

In this paper, we extend our classification results to the following nonlocal system:
\begin{equation}\label{eq. main}
    \left\{
    \begin{aligned}
        (-\Delta)^{s}u + u(u^{2}+v^{2}-1) + v(\alpha uv - \omega) &= 0,\\
        (-\Delta)^{s}v + v(u^{2}+v^{2}-1) + u(\alpha uv - \omega) &= 0,
    \end{aligned}
    \right. \quad u,v>0,
\end{equation}
which can be seen as a fractional analogue of the local Gross-Pitaevskii system studied in \cite{WZ25}. This nonlocal model describes two-component Rabi-coupled Bose-Einstein condensates with long-range interactions, retaining both linear and nonlinear couplings present in the local system. Our aim is to classify monotone entire solutions of \eqref{eq. main} in $\mathbb{R}^{n}$, thereby extending classical and fractional De Giorgi conjecture  results to a coupled, nonlocal setting and providing insight into phase transitions and vortex patterns in multi-component condensates.

\subsection{Notations}

Before stating our main results, we first clarify a few notations and provide some background on the fractional operators involved.

The symbol $(-\Delta)^s$ in \eqref{eq. main} denotes the fractional Laplacian, a nonlocal operator that generalizes the classical Laplacian and accounts for long-range interactions. It can be expressed as
\begin{equation*}
(-\Delta)^s u(x) = C_{n, s}\, PV \int_{\mathbb{R}^n} \frac{u(x)-u(y)}{|x-y|^{n+2s}}\,dy 
= C_{n, s} \lim_{\varepsilon \to 0} \int_{\mathbb{R}^n \setminus B_\varepsilon(x)} \frac{u(x)-u(y)}{|x-y|^{n+2s}}\,dy,
\end{equation*}
where $PV$ denotes the Cauchy principal value, and $C_{n, s}$ is a dimensional normalization constant depending on $n$ and $s$. To ensure the integral is well defined, we require that 
\[
u \in C_{\rm loc}^{1,1}(\mathbb{R}^n) \cap \mathcal{L}_{2s}(\mathbb{R}^n), \quad \text{where} \quad 
\mathcal{L}_{2s}(\mathbb{R}^n) = \Big\{ u \in L_{\rm loc}^1(\mathbb{R}^n) : \int_{\mathbb{R}^n} \frac{|u(x)|}{1+|x|^{n+2s}}\, dx < \infty \Big\},
\]
endowed with the norm
\[
\|u\|_{\mathcal{L}_{2s}(\mathbb{R}^n)} := \int_{\mathbb{R}^n} \frac{|u(x)|}{1+|x|^{n+2s}}\, dx.
\]
This function space ensures both local regularity and sufficient decay at infinity, which are crucial for the well-posedness of nonlocal problems.

A convenient tool to study the fractional Laplacian is the Caffarelli-Silvestre extension~\cite{CS07}, which represents $(-\Delta)^s$ as a Dirichlet-to-Neumann map of a local degenerate elliptic problem in the upper half-space. The extension $U: \mathbb{R}^{n+1}_+ \to \mathbb{R}$ of $u$ is defined via convolution with the Poisson kernel:
\[
U(x,y) = \int_{\mathbb{R}^n} P_s(x-z,y)\, u(z)\, dz, \quad 
P_s(x,y) = c_{n,s} \frac{y^{2s}}{(|x|^2 + y^2)^{\frac{n+2s}{2}}}, \quad (x,y) \in \mathbb{R}^n \times (0,\infty).
\]
For globally bounded and sufficiently smooth functions $u$, the extended function $U$ is smooth in the extended space and satisfies
\[
\begin{cases}
\operatorname{div} \big( y^{1-2s} \nabla U(x,y) \big) = 0, & (x,y) \in \mathbb{R}^{n+1}_+, \\[1mm]
U(x,0) = u(x), & x \in \mathbb{R}^n.
\end{cases}
\]
Moreover, $(-\Delta)^s u$ can be realized as the Dirichlet-to-Neumann operator:
\[
(-\Delta)^s u(x) = -d_s \lim_{y \to 0^+} y^{1-2s} \, \partial_y U (x,y), \qquad x \in \mathbb{R}^n,
\]
where the normalization constant is 
\[
d_s = \frac{2^{2s-1} \Gamma(s)}{\Gamma(1-s)}.
\]

Regarding the nonlocal system \eqref{eq. main}, it is important to note that when the parameters $(\alpha, \omega)$ satisfy $0<\omega<\frac{\alpha}{2}$, there exist exactly three distinct positive constant solutions:
\[
(a,b), \quad (b,a), \quad \text{and} \quad \Big(\sqrt{\tfrac{1+\omega}{2+\alpha}}, \, \sqrt{\tfrac{1+\omega}{2+\alpha}}\Big),
\]
which correspond to homogeneous equilibrium states of the system. These solutions play a key role in understanding the structure of monotone solutions and the formation of interfaces in Rabi-coupled condensates (see Figure~\ref{fig. range} for a schematic representation).

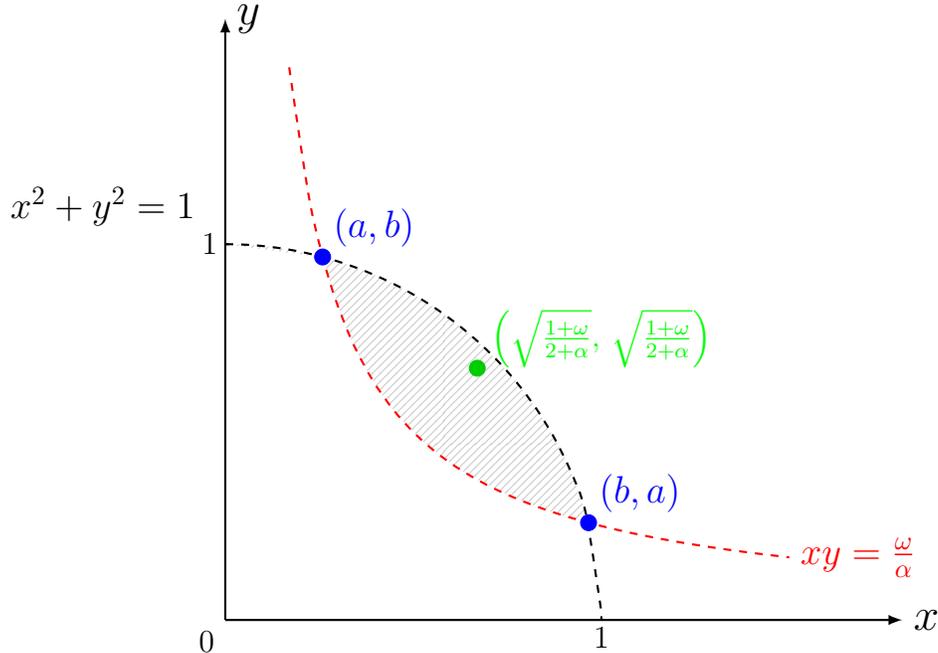
\begin{figure}[htbp]
    \centering
    \begin{tikzpicture}[scale=5,>=latex]
      % 坐标轴
      \draw[->,thick] (0,0) -- (1.8,0) node[font=\Large][right] {$x$};
      \draw[->,thick] (0,0) -- (0,1.6) node[font=\Large][right] {$y$};
       \draw[thick] (1,0) -- (1,0.01) node[below] {$1$};
\draw[thick] (0, 1) -- (0.01,1) node[left] {$1$};
      % 原点
      \node[below left] at (0,0) {$0$};

      % 计算交点
      \pgfmathsetmacro{\aone}{sqrt(0.5 - sqrt(3)/4)}
      \pgfmathsetmacro{\bone}{1/(4*\aone)}
      \pgfmathsetmacro{\atwo}{sqrt(0.5 + sqrt(3)/4)}
      \pgfmathsetmacro{\btwo}{1/(4*\atwo)}

      % 定义圆和曲线路径（只取第一象限交点下方区域）
\draw[white, dashed] plot[domain=0:\atwo, variable=\x, smooth] ({\x},{sqrt(1-\x*\x)});
\draw[white, dashed] plot[domain=\aone:\atwo, variable=\x, smooth] ({\x},{1/(4*\x)});
      % 阴影区域：只填充两条曲线交点下方
      \begin{scope}
        \clip plot[domain=0:\atwo, variable=\x, smooth] ({\x},{sqrt(1-\x*\x)})
             -- plot[domain=\atwo:\aone, variable=\x, smooth] ({\x},{1/(4*\x)}) -- cycle;
        \fill[pattern=north east lines, pattern color=gray!40] (0,0) rectangle (1.6,1.6);
      \end{scope}

      % 虚线圆
      \draw[dashed, thick, domain=0:1, variable=\x, smooth]
        plot ({\x},{sqrt(1-\x*\x)}) node[font=\large][left] at (-0.05, 1.1) {$x^2+y^2=1$};

      % 曲线 y=1/(4x)
      \draw[dashed, red, thick, domain=0.17:1.5, variable=\x, smooth]
        plot ({\x},{1/(4*\x)}) node[font=\large][right] {$xy=\tfrac{\omega}{\alpha}$};

      % 标出交点
      \filldraw[blue] (\aone,\bone) circle(0.6pt) node[font=\large][above right] {$(a,b)$};
      \filldraw[blue] (\atwo,\btwo) circle(0.6pt) node[font=\large][above right] {$(b,a)$};

      % 特殊点
      \filldraw[green!80!black] (0.67,0.67) circle(0.6pt);
      \node[green, font=\fontsize{12}{12}\selectfont] at (1,0.75) 
      {$\Bigl(\sqrt{\tfrac{1+\omega}{2+\alpha}},\,\sqrt{\tfrac{1+\omega}{2+\alpha}}\Bigr)$};

    \end{tikzpicture}
\caption{Steady states $(a, b), (b, a)$ and  $\Bigl(\sqrt{\tfrac{1+\omega}{2+\alpha}},\,\sqrt{\tfrac{1+\omega}{2+\alpha}}\Bigr)$.}\label{fig. range}
\end{figure}

\subsection{Main results}

% In \cite{AFN21}, Aftalion-Farina-Nguyen proved its Gibbons' conjecture for $s=1$ in all dimensions.

% Our method is inspired by Ambrosio-Cabr\'e \cite{AC00}.

Let us now state our main result, which is a Liouville-type theorem of De Giorgi type in $\mathbb{R}^{3}$.

\begin{theorem}\label{thm. main theorem}
Let $(\alpha,\omega)$ be a pair of parameters satisfying $\displaystyle 0<\omega<\frac{\alpha}{2}$. Let $(n,s)$ be a pair such that either $s\in[\frac{1}{2},1)$ with $n\geq3$, or $s\in(0,\frac{1}{2})$ with $n\geq2$. Assume that $(u,v)$ is a pair of positive and globally bounded functions in $C^{1,1}_{\rm loc}(\mathbb{R}^{n})$ satisfying \eqref{eq. main}. Suppose further that
\[
\frac{\partial u}{\partial x_{3}} > 0 > \frac{\partial v}{\partial x_{3}} \quad \text{in } \mathbb{R}^{n}.
\]
Then the pair $(u,v)$ must be one-dimensional. More precisely, there exists a unit vector $\vec{\nu}\in\mathbb{R}^{n}$ and a pair of positive functions $(U(t),V(t))$ satisfying the one-dimensional version of \eqref{eq. main} in $\mathbb{R}$, such that
\[
(u(x),v(x)) \equiv (U(x\cdot \vec{\nu}), V(x\cdot \vec{\nu})), \quad 
U'(t) > 0 > V'(t) \quad \text{for all } t \in \mathbb{R}.
\]
Moreover, let $0<a<b$ be constants such that $a^{2}+b^{2}=1$ and $\displaystyle ab = \frac{\omega}{\alpha}$. Then the asymptotic limits are
\[
\lim_{t\to -\infty} (U(t),V(t)) = (a,b), \quad
\lim_{t\to +\infty} (U(t),V(t)) = (b,a),
\]
describing a transition layer connecting the two homogeneous equilibria.
\end{theorem}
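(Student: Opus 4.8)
The plan is to adapt the Ambrosio--Cabr\'e scheme for De Giorgi's conjecture to the coupled nonlocal system \eqref{eq. main} via the Caffarelli--Silvestre extension. I would let $U,V:\mathbb{R}^{n+1}_{+}\to\mathbb{R}$ denote the $s$-harmonic extensions of $u,v$, so that $\operatorname{div}(y^{1-2s}\nabla U)=\operatorname{div}(y^{1-2s}\nabla V)=0$ in $\mathbb{R}^{n+1}_{+}$ and their traces solve \eqref{eq. main} through the Dirichlet--to--Neumann identity. Writing $W$ for the potential of \eqref{eq. main}, its Hessian has diagonal entries $W_{uu}=3u^{2}+(1+\alpha)v^{2}-1$, $W_{vv}=3v^{2}+(1+\alpha)u^{2}-1$ and off-diagonal entry $W_{uv}=2(1+\alpha)uv-\omega$. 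Since the Poisson kernel is positive and the strong maximum principle holds for $\operatorname{div}(y^{1-2s}\nabla\cdot)$, the monotonicity $\partial_{x_{3}}u>0>\partial_{x_{3}}v$ will propagate to $\partial_{x_{3}}U>0>\partial_{x_{3}}V$ throughout $\overline{\mathbb{R}^{n+1}_{+}}$; differentiating \eqref{eq. main} in $x_{3}$, the pair $\sigma:=(\partial_{x_{3}}U,\partial_{x_{3}}V)$ solves the linearized extended system.

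First I would establish two global inputs. (i) Because the translates $t\mapsto\bigl(u(\cdot+te_{3}),v(\cdot+te_{3})\bigr)$ are monotone for the partial order $(u_{1},v_{1})\preceq(u_{2},v_{2})\iff u_{1}\le u_{2}$ and $v_{1}\ge v_{2}$, they foliate a neighbourhood of the solution, and a vectorial calibration argument --- the coupled, fractional analogue of Alberti--Ambrosio--Cabr\'e --- shows that $(U,V)$ is a local minimizer (in particular stable) of the extended Ginzburg--Landau energy among $\preceq$-ordered competitors. Combined with the fractional density/energy estimates of Savin--Valdinoci for $s\in[\tfrac12,1)$ (resp. Caffarelli--Roquejoffre--Savin and Cabr\'e--Cinti for $s\in(0,\tfrac12)$), this yields the sharp growth bound $\int_{B_{R}^{+}}y^{1-2s}\bigl(|\nabla U|^{2}+|\nabla V|^{2}\bigr)\le C\Lambda(R)$, with $\Lambda(R)=R^{n-1}$ for $s\in[\tfrac12,1)$ and $\Lambda(R)=R^{n-2s}$ for $s\in(0,\tfrac12)$. (ii) A sliding/maximum-principle argument gives a priori bounds on $(u,v)$ from which $2(1+\alpha)uv>\omega$, i.e. $W_{uv}>0$, along the solution; equivalently, after flipping the sign of the second component the linearized system is cooperative with the positive solution $(\partial_{x_{3}}U,-\partial_{x_{3}}V)$.

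The core step will be a vectorial Liouville theorem. Fixing a unit vector $e\perp e_{3}$ and setting $\phi:=(\partial_{e}U,\partial_{e}V)$ (which also solves the linearized extended system), I would put $\psi_{i}:=\phi_{i}/\sigma_{i}$, well defined since $\sigma_{1}>0>\sigma_{2}$ everywhere. A direct computation gives $\operatorname{div}(y^{1-2s}\sigma_{i}^{2}\nabla\psi_{i})=0$ in $\mathbb{R}^{n+1}_{+}$ and, on $\{y=0\}$, $\sigma_{i}^{2}\lim_{y\to0^{+}}y^{1-2s}\partial_{y}\Psi_{i}=\tfrac1{d_{s}}\sigma_{1}\sigma_{2}W_{uv}(\psi_{j}-\psi_{i})$ with $j\ne i$. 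Testing the two equations with $\psi_{i}\zeta^{2}$ and summing, the boundary terms collapse to $\tfrac1{d_{s}}\int_{\{y=0\}}\sigma_{1}\sigma_{2}W_{uv}(\psi_{1}-\psi_{2})^{2}\zeta^{2}$, which is $\le0$ since $\sigma_{1}\sigma_{2}<0$ and $W_{uv}>0$; hence $\int y^{1-2s}\sum_{i}\sigma_{i}^{2}\zeta^{2}|\nabla\psi_{i}|^{2}\le 4\int y^{1-2s}\sum_{i}\phi_{i}^{2}|\nabla\zeta|^{2}$. Inserting the cutoffs $\zeta=\zeta_{R}$ ($\equiv1$ on $B_{R}^{+}$, $|\nabla\zeta_{R}|\le C/R$) and the energy bound of step (i) makes the right-hand side $\le CR^{-2}\Lambda(2R)$: the dimensional hypotheses of the theorem are exactly those for which $R^{-2}\Lambda(2R)$ stays bounded, and in the borderline case one iterates once more, exploiting the now-finite $\int y^{1-2s}\sum_{i}\sigma_{i}^{2}|\nabla\psi_{i}|^{2}$ so that the annular tail vanishes while the remaining factor stays bounded. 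This gives $\nabla\psi_{i}\equiv0$, hence $\psi_{i}\equiv c_{i}$; the boundary relation then forces $c_{1}=c_{2}=:\lambda_{e}$ because $\sigma_{1}\sigma_{2}W_{uv}\ne0$ on $\{y=0\}$. Thus $\partial_{e}(u,v)=\lambda_{e}\,\partial_{x_{3}}(u,v)$ for every $e$, which by linearity in $e$ furnishes a unit vector $\vec\nu$ (with $\vec\nu\cdot e_{3}>0$) and profiles $U,V$ of one variable with $(u(x),v(x))\equiv(U(x\cdot\vec\nu),V(x\cdot\vec\nu))$; the reduced pair solves the one-dimensional version of \eqref{eq. main} since $(-\Delta)^{s}_{\mathbb{R}^{n}}$ acts on functions of $x\cdot\vec\nu$ as the one-dimensional fractional Laplacian, and $U'>0>V'$. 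For the asymptotics, $U,V$ (monotone and bounded) have limits at $\pm\infty$ that are constant solutions of \eqref{eq. main}, hence lie in $\{(a,b),(b,a),(m,m)\}$ with $m=\sqrt{(1+\omega)/(2+\alpha)}$; since $(m,m)$ is a linearly unstable equilibrium of the reduced problem, a local minimizer cannot connect to it, so the limits are $(a,b)$ and $(b,a)$, with $U'>0$ fixing $(a,b)$ at $-\infty$, and feeding $(a,b)$ into \eqref{eq. main} yields $a^{2}+b^{2}=1$ and $ab=\omega/\alpha$.

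The hard part will be step (i): proving that a monotone solution of the coupled nonlocal system is a genuine local minimizer in the vectorial setting, and then extracting the sharp energy growth $\Lambda(R)$ --- this is where the structural dichotomy between $s\ge\tfrac12$ (classical perimeter $\Gamma$-limit) and $s<\tfrac12$ (fractional perimeter $\Gamma$-limit) enters and where the admissible dimensions are generated. Securing the sign condition $W_{uv}>0$ along the solution in step (ii) is the other delicate point that must be in place before the Liouville argument can be run.
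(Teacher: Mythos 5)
Your core Liouville step (forming the ratios $\psi_i=\phi_i/\sigma_i$ of lateral derivatives by the monotone pair, the Caccioppoli inequality with a cutoff, the sign of the boundary term coming from $\sigma_1\sigma_2<0$ together with $W_{uv}>0$, and the conclusion $\partial_e(u,v)=\lambda_e\,\partial_{x_3}(u,v)$ for every horizontal $e$) is exactly the mechanism of Lemma~\ref{lem. quadratic form Q is negative semi-definite}, Lemma~\ref{lem. Liouville-type result with cutoff function} and the final proof, and your step (ii) matches Lemmas~\ref{leuaddv}--\ref{lem. range}, which give $uv\ge\omega/\alpha$ and hence $W_{uv}>0$. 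The genuine gap is your step (i), the energy growth. The paper never proves, nor uses, that the monotone solution is a local minimizer: the bound of Lemma~\ref{lem. energy growth} is obtained by a sliding identity (Lemma~\ref{lem. evolution of E(t)}), which controls $\int|dE/dt|\,dt$ for translated cubes using only the monotonicity and the bounds on $(u,v)$, combined with an analysis of the limiting profiles $(\overline u,\overline v)$, $(\underline u,\underline v)$ as $x_n\to\pm\infty$ (Lemmas~\ref{lem. classification of one limiting profile}--\ref{lem. classification of two limiting profiles}): there one constructs a positive pair $(\varphi,\psi)$ for the limiting linearized operator through an eigenvalue problem (using the nonnegativity of the quadratic form inherited from monotonicity), classifies the limits via the lower-dimensional Liouville result (Corollary~\ref{cor. n=2 is more obvious}), and then bounds their energy by a one-dimensional version of the same sliding argument.

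Your proposed substitute --- a vectorial calibration giving local minimality among order-constrained competitors, plus the scalar fractional density/energy estimates of Savin--Valdinoci, Caffarelli--Roquejoffre--Savin and Cabr\'e--Cinti --- is not available off the shelf for this coupled system and, as stated, is circular. Minimality among competitors trapped between $(\underline u,\underline v)$ and $(\overline u,\overline v)$ only produces an $O(R^{n-1})$-type bound if the constrained class admits a competitor that is essentially constant inside $B_{R-1}$, which requires knowing beforehand that the limiting profiles are the constant equilibria $(a,b)$ and $(b,a)$; under a pure monotonicity hypothesis in $\mathbb{R}^{3}$ the limits are a priori nontrivial functions of $(x_1,x_2)$, and identifying or at least classifying them is exactly the content of the paper's limit-profile lemmas, which your outline omits entirely. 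Moreover, the cited density estimates are scalar results (and are not what yields an energy upper bound in any case), no vectorial analogue is supplied, and for $s=\tfrac12$ the correct growth is $R^{n-1}\ln R$ rather than $R^{n-1}$ (the paper's logarithmic cutoff in Lemma~\ref{lem. Liouville-type result with cutoff function} is what absorbs this). Finally, your asymptotics argument again leans on minimality (``a local minimizer cannot connect to an unstable equilibrium''), whereas the paper excludes the symmetric state $\bigl(\sqrt{(1+\omega)/(2+\alpha)},\sqrt{(1+\omega)/(2+\alpha)}\bigr)$ through the positivity of the limiting quadratic form in Step 3 of Lemma~\ref{lem. classification of one limiting profile}. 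To complete your scheme you would either have to build the vectorial minimality-plus-density-estimate machinery from scratch, or replace step (i) by the sliding-plus-limiting-profile analysis that the paper actually carries out.
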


From a broader perspective, this development establishes a natural connection between the theory of nonlocal operators and the analysis of phase transitions in multi-component Rabi-coupled Bose--Einstein condensates. 
Analogous results for non-cooperative elliptic systems in the local case $s=1$ can be found in \cite{FarinaSciunziSoave2020, Fazly2021JDE,Fazly2013CVPDE, Le2023}, and results for a nonlocal system can be found in \cite{Fazly2017,FS15}.
For the competitive system \eqref{eq. main} arising in two-component Bose-Einstein condensates, the Gibbons-type conjecture was recently proved in all dimensions by Aftalion, Farina, and Nguyen \cite{AFN21}. 
Our approach is inspired by Ambrosio and Cabr\'e~\cite{AC00} for monotone solutions of the Allen-Cahn equation, which combines energy estimates, sliding techniques, and careful asymptotic analysis. 
Extending these methods to the fractional and coupled setting introduces new challenges due to the nonlocality and the interactions between components. 
We overcome these difficulties through suitable adaptations, providing a rigorous characterization of monotone solutions and the structure of the associated transition layers in Rabi-coupled Bose-Einstein condensates with long-range interactions.

\section{Preliminaries}

In this section, we collect several auxiliary results that will be repeatedly used in the sequel. 
They mainly concern integral estimates and structural properties of nonnegative functions in the space 
$\mathcal{L}_{2s}(\mathbb{R}^n)$.
Such estimates play a fundamental role in the analysis of nonlocal operators of fractional order, especially when establishing decay, growth, and comparison-type inequalities.

\subsection{Basic estimates}

We first show that  the following Ricatti-type equation has no non-trivial positive solution.

\begin{lemma}\label{lem. Ricatti}
    Assume that $u\in\mathcal{L}_{2s}(\mathbb{R}^{n})$ is a non-negative, continuous, and bounded function satisfying
    \begin{equation*}
        (-\Delta)^{s}u\leq-\lambda u^{p}
    \end{equation*}
    in the weak or viscosity sense, where $\lambda,p>0$ are two given constants. Then $u\equiv0$ in $\mathbb{R}^n$.
\end{lemma}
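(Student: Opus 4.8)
The plan is to argue by contradiction. Suppose $u\not\equiv0$; since $u\ge0$ is continuous and bounded, $M:=\sup_{\mathbb{R}^{n}}u>0$. The key structural observation is that \emph{both} features of the hypothesis must be used. On the one hand $(-\Delta)^{s}u\le-\lambda u^{p}\le0$ makes $u$ an $s$-subharmonic function; but bounded $s$-subharmonic functions on $\mathbb{R}^{n}$ need not be constant when $n>2s$ (for instance $x\mapsto M(1-|x|^{-\beta})^{+}$ with $0<\beta<n-2s$), so $s$-subharmonicity alone cannot force $u\equiv0$. On the other hand the sink term $-\lambda u^{p}$ alone is also not enough. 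The strategy is therefore to pick a point $x_{0}$ where $u$ almost attains its supremum and to confront, near $x_{0}$, two incompatible facts living at two different scales: first, that $u^{p}$ has a very small integral near $x_{0}$ (almost no ``mass'' has been lost to the sink, since $u$ is still close to $M$), and second, that $u$ nevertheless stays close to $M$ on a region of \emph{definite size} around $x_{0}$ (an $s$-subharmonic function cannot have narrow spikes).

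Concretely, I fix a small $\delta>0$ (to be sent to $0$) and choose $x_{0}$ with $u(x_{0})>M-\delta$; all constants below are translation invariant, so $x_{0}$ stays arbitrary. For the \emph{upper} bound, let $w$ solve $(-\Delta)^{s}w=-\lambda u^{p}$ in $B_{4}(x_{0})$ with $w=u$ outside, and represent it through the Poisson kernel $P_{4}$ and the Green function $G_{4}$ of $B_{4}(x_{0})$ for $(-\Delta)^{s}$. Using $0\le u\le M$ and $\int P_{4}=1$ one obtains $w(x_{0})\le M-\lambda\int_{B_{4}(x_{0})}G_{4}(x_{0},z)u(z)^{p}\,dz$, and, since $u$ is a subsolution in the weak or viscosity sense, the comparison principle for $(-\Delta)^{s}$ gives $u\le w$ in $B_{4}(x_{0})$, hence
\[
\lambda\int_{B_{4}(x_{0})}G_{4}(x_{0},z)\,u(z)^{p}\,dz\ \le\ M-u(x_{0})\ <\ \delta .
\]
Because $G_{4}(x_{0},\cdot)$ is bounded below by a positive constant $c_{1}=c_{1}(n,s)$ on the compact set $\overline{B_{2}(x_{0})}\subset B_{4}(x_{0})$, this yields $\int_{B_{2}(x_{0})}u^{p}\le\delta/(\lambda c_{1})$.

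For the \emph{lower} bound I would use only $(-\Delta)^{s}u\le0$: comparing $u$ with its $s$-harmonic replacement on $B_{1}(x_{0})$ gives the sub-mean-value inequality $u(x_{0})\le\int_{\mathbb{R}^{n}\setminus B_{1}(x_{0})}P_{1}(x_{0}-z)u(z)\,dz$, which together with $u(x_{0})>M-\delta$, $0\le u\le M$, $\int P_{1}=1$ forces $\int_{\mathbb{R}^{n}\setminus B_{1}(x_{0})}P_{1}(x_{0}-z)(M-u(z))\,dz<\delta$. On the annulus $A:=\{\,3/2<|z-x_{0}|<2\,\}$, where $P_{1}(x_{0}-\cdot)\ge c_{2}(n,s)>0$, this gives $\int_{A}(M-u)<\delta/c_{2}$, and a Chebyshev estimate then shows that, for $\delta$ small, $u\ge M-\sqrt{\delta}$ on a subset of $A$ of measure at least $|A|/2$, whence $\int_{A}u^{p}\ge (M/2)^{p}|A|/2$. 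Since $A\subset B_{2}(x_{0})$, combining with the upper bound gives $(M/2)^{p}|A|/2\le\delta/(\lambda c_{1})$, i.e. $\delta\ge c(n,s,\lambda)\,M^{p}>0$; letting $\delta\downarrow0$ contradicts $M>0$. Hence $M\le0$, so $u\equiv0$.

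I expect the routine ingredients --- existence and Poisson/Green representation of $w$ and of the $s$-harmonic replacement, and the comparison principle for $(-\Delta)^{s}$ in the weak or viscosity sense --- to be standard, as is the uniformity of $c_{1},c_{2}$, which follows from translation invariance together with the strict positivity (and lower semicontinuity) on compact subsets of the Green function and Poisson kernel of a fixed ball; the degenerate ranges $n\le 2s$ need nothing beyond positivity on compacta. The genuinely delicate step, and the one I would be most careful about, is the two-scale matching: the sub-mean-value inequality must be applied at the small radius $1$ so that the region $A$ on which $u$ is forced to stay near $M$ is bounded and close to $x_{0}$, while the Green-function comparison must be applied at the larger radius $4$ so that its ball contains $A$. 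It is essential that $A\subset B_{1}(x_{0})^{c}\cap B_{4}(x_{0})$ --- this is exactly what makes the two estimates bear on the same set and collide.
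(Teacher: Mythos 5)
Your argument is correct, and it reaches the conclusion by a route that differs from the paper's in its key technical ingredient. The paper also works at a near-maximum point $p$ (with $u(p)\ge(1-\epsilon)M$), but it obtains the measure-theoretic information by applying the weak Harnack inequality of Di Castro--Kuusi--Palatucci to $v=M-u$, yielding the density estimate $|B_r(p)\cap\{u\ge M/2\}|\ge\tfrac12|B_r(p)|$ \emph{at every scale} $r$; it then compares $u$ with the Dirichlet solution driven by $-\Lambda\chi_{\Omega_r}$ in $B_r(p)$ and uses the scaling $G_r\sim r^{2s-n}g(|p-y|/r)$ together with a bathtub-type rearrangement to get $u(p)\le M-cr^{2s}$, so the contradiction comes from letting $r\to\infty$ against $u\ge0$. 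You instead stay at a fixed unit scale: the Poisson-kernel sub-mean-value inequality at $x_0$ replaces the weak Harnack step and forces $u\ge M-\sqrt\delta$ on half of a fixed annulus $A\subset B_2(x_0)\setminus B_1(x_0)$, and the Green representation in $B_4(x_0)$ then shows the sink term alone pushes $u(x_0)$ at least $c\lambda M^p$ below $M$, contradicting $u(x_0)>M-\delta$ as $\delta\downarrow0$ (rather than contradicting nonnegativity at large scales). Your version is more elementary in that it needs only the Poisson/Green representation and the comparison principle, with no external Harnack-type theorem and no multi-scale analysis; the paper's version, at the price of citing weak Harnack, produces the quantitative all-scale deficit $u(p)\le M-cr^{2s}$, which is stronger information though not needed for the lemma. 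The routine points you defer (viscosity/weak comparison with the Green-function-defined $w$, positivity of $G_4(x_0,\cdot)$ and of $P_1(x_0,\cdot)$ on the relevant compact sets, uniformity in $x_0$ by translation invariance, and the degenerate range $n\le 2s$) are indeed standard and are treated no more carefully in the paper's own proof, so I see no gap.
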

\begin{proof}
    Let us denote $\displaystyle M:=\sup_{x\in\mathbb{R}^{n}}u$, which is a finite and non-negative number. Our goal is to show $M=0$. Suppose on the contrary that $M>0$, we first apply the weak Harnack principle (see \cite[Theorem 1.2]{dCKP14}) to $v=M-u$, which satisfies $(-\Delta)^{s}v\geq0$ and $v\geq0$ in $\mathbb{R}^{n}$. It then follows that for every $r>0$ and $p\in\mathbb{R}^{n}$,
    \begin{equation*}
        \Big(\frac{1}{|B_{r}(p)|}\int_{B_{r}(p)}v^{t}dx\Big)^{1/t}\leq C\inf_{B_{2r}(p)}v,\quad\mbox{where }t<\frac{n}{n-2s}.
    \end{equation*}

    Let $p$ be chosen such that $u(p)\geq(1-\epsilon)M$ for a sufficiently small $\epsilon$, which means $\displaystyle\inf_{B_{2r}(p)}v\leq\epsilon M$ for any $r>0$. It then follows that
    \begin{equation*}
        C\cdot\epsilon M\geq\Big(\frac{1}{|B_{r}(p)|}\int_{B_{r}(p)}v^{t}dx\Big)^{1/t}\geq\frac{M}{2}\cdot\Big|B_{r}(p)\cap\{x:u(x)\leq\frac{M}{2}\}\Big|^{1/t}\cdot|B_{r}(p)|^{-1/t}
    \end{equation*}
    Therefore, by choosing a suitable $p$ such that the corresponding $\epsilon$ is small, we have that for all $r>0$,
    \begin{equation*}
        \Big|\Omega_{r}\Big|\geq\frac{1}{2}\cdot|B_{r}(p)|,\quad\Omega_{r}:=B_{r}(p)\cap\{x:u(x)\geq\frac{M}{2}\}.
    \end{equation*}
    Write
    \begin{equation*}
        \Lambda:=\lambda\cdot(\frac{M}{2})^{p}>0,
    \end{equation*}
    then we have $0\leq u\leq M$ everywhere in $\mathbb{R}^{n}$ and it satisfies:
    \begin{equation*}
        (-\Delta)^{s}u\leq-\Lambda\cdot\chi_{\Omega_{r}},\quad\mbox{for every }r>0.
    \end{equation*}

    By the maximal principle, we see that for every $r>0$, $u(x)\leq M+w_{r}(x)$, where $w_{r}(x)$ is the solution to the following problem:
    \begin{equation*}
        \left\{\begin{aligned}
            &(-\Delta)^{s}w_{r}(x)=-\Lambda\cdot\chi_{\Omega_{r}},&\mbox{when }&x\in B_{r}(p),\\
            &w_{r}(x)=0,&\mbox{when }&x\notin B_{r}(p),
        \end{aligned}\right.
    \end{equation*}
    Let $G_{r}(x,y)$ be the Green function in $B_{r}(p)$ of the operator $(-\Delta)^{s}$, then we have that for $x=p$:
    \begin{align*}
        G_{r}(p,y)=&\kappa_{n,s}|p-y|^{2s-n}\int_{0}^{\frac{r^2-|p-y|^2}{|p-y|^2}}\frac{t^{s-1}}{(1+t)^{n/2}}dt=r^{2s-n}g(\frac{|p-y|}{r})
    \end{align*}
    where $g(\rho)$ is a decreasing function in $\rho$ with $g(1)=0$. Then, we have
    \begin{equation*}
        w_{r}(p)\leq-\Lambda\int_{B_{r}(p)}G_{r}(p,y)\cdot\chi_{\Omega_{r}}dy\leq-\Lambda\int_{B_{r}(p)\setminus B_{2^{-\frac{1}{n}}r}(p)}G_{r}(p,y)dy=-c\cdot r^{2s}.
    \end{equation*}
    In other words, we have $u(p)\leq M-c\cdot r^{2s}$ for every $r>0$, which contradicts the assumption that $u\geq0$.
\end{proof}

In order to control the boundary behavior of the extension functions and to connect the nonlocal energy with its local counterpart, 
we need the following  trace inequality.

\begin{lemma}[Trace inequality]\label{lem. trace}
    Assume that $f(x,y)\geq0$ is a compactly supported function in $\mathbb{R}^{n+1}_{+}$. Then 
    \begin{equation*}
        \|f(x,0)\|_{H^{s}(\mathbb{R}^{n})}^{2}\leq C\int_{\mathbb{R}^{n+1}_{+}}y^{1-2s}|\nabla f|^{2}dxdy.
    \end{equation*}
\end{lemma}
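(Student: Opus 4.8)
The statement to prove is the trace inequality
\[
\|f(x,0)\|_{H^{s}(\mathbb{R}^{n})}^{2}\leq C\int_{\mathbb{R}^{n+1}_{+}}y^{1-2s}|\nabla f|^{2}\,dxdy
\]
for compactly supported nonnegative $f$ on $\mathbb{R}^{n+1}_{+}$.

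\textbf{Plan of proof.} The plan is to exploit the variational characterization of the fractional Laplacian via the Caffarelli--Silvestre extension, which has already been recalled in the introduction. First I would recall that for any function $g\in H^{s}(\mathbb{R}^{n})$, its harmonic (in the weighted sense) extension $G$, defined by convolution with the Poisson kernel $P_s(x-z,y)$, is the unique minimizer of the degenerate Dirichlet energy $\int_{\mathbb{R}^{n+1}_{+}}y^{1-2s}|\nabla G|^{2}\,dxdy$ among all functions in the weighted Sobolev space $H^{1}(\mathbb{R}^{n+1}_{+};y^{1-2s}dxdy)$ with trace $g$ on $\{y=0\}$. Moreover, this minimal energy equals $d_s^{-1}\|g\|_{\dot H^{s}(\mathbb{R}^{n})}^{2}$ up to the normalization constant, i.e.
\[
\inf_{\substack{G|_{y=0}=g}}\int_{\mathbb{R}^{n+1}_{+}}y^{1-2s}|\nabla G|^{2}\,dxdy = d_s^{-1}[g]_{\dot H^{s}(\mathbb{R}^{n})}^{2}.
\]
Applying this with $g=f(\cdot,0)$ and using that $f$ itself is an admissible competitor for the infimum on the left, one immediately gets $[f(\cdot,0)]_{\dot H^{s}}^{2}\le d_s\int_{\mathbb{R}^{n+1}_{+}}y^{1-2s}|\nabla f|^{2}$, controlling the homogeneous Gagliardo seminorm.

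\textbf{Handling the full norm.} Since the statement uses the full $H^{s}$ norm, $\|g\|_{H^{s}}^{2}=\|g\|_{L^{2}}^{2}+[g]_{\dot H^{s}}^{2}$ (or the equivalent $\int (1+|\xi|^{2s})|\hat g|^2$), I also need to bound $\|f(\cdot,0)\|_{L^{2}(\mathbb{R}^{n})}^{2}$. This is where compact support of $f$ is essential: if $f$ vanishes outside a ball $B_R^{n+1}$, then for each $x$ the function $y\mapsto f(x,y)$ vanishes for $y\ge R$, so by the fundamental theorem of calculus and Cauchy--Schwarz,
\[
|f(x,0)|^{2}=\Big|\int_{0}^{R}\partial_y f(x,y)\,dy\Big|^{2}\le \Big(\int_0^R y^{2s-1}\,dy\Big)\Big(\int_0^R y^{1-2s}|\partial_y f(x,y)|^2\,dy\Big)=\frac{R^{2s}}{2s}\int_0^R y^{1-2s}|\partial_y f|^2\,dy.
\]
Integrating in $x$ over the (bounded) projection of the support gives $\|f(\cdot,0)\|_{L^{2}}^{2}\le C(R,s)\int_{\mathbb{R}^{n+1}_+}y^{1-2s}|\partial_y f|^2\,dxdy$, which is dominated by the right-hand side. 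Combining the two estimates yields the claim with $C=C(n,s,R)$; alternatively, if one wants a constant independent of the support size, one interpolates or simply notes that in all intended applications the support scale is fixed.

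\textbf{Main obstacle.} The genuinely delicate point is the rigorous justification of the extension energy identity for functions in the weighted Sobolev class — i.e., that the Poisson extension minimizes the weighted Dirichlet energy and that this minimum is a constant multiple of the $\dot H^s$ seminorm. This is the content of Caffarelli--Silvestre \cite{CS07}, so I would simply cite it rather than reprove it; the only care needed is to check that a compactly supported $f\in H^1(\mathbb{R}^{n+1}_+;y^{1-2s}dxdy)$ indeed has a well-defined trace in $H^s(\mathbb{R}^n)$ and is an admissible competitor, which follows from a standard density argument approximating $f$ by smooth compactly supported functions and passing to the limit using the already-established inequality for smooth functions. Everything else is elementary; the nonnegativity hypothesis on $f$ is in fact not needed for this particular inequality and is stated only for consistency with later uses.
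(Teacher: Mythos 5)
Your proposal is correct, and it rests on the same core fact as the paper's proof -- the Caffarelli--Silvestre (Poisson) extension minimizes the weighted Dirichlet energy $\int y^{1-2s}|\nabla\cdot|^{2}$ among all extensions of a given trace, and the minimal energy is a constant multiple of the $\dot H^{s}$ seminorm squared -- but the execution differs in where the work is placed. You simply cite this variational characterization and use $f$ as a competitor, whereas the paper devotes essentially its entire proof to establishing precisely that comparison by hand: since the Poisson extension $g$ of $f(\cdot,0)$ is not compactly supported, the authors build the barrier $h_{R}$, modify $f$ to $f_{R}=f+h_{R}$ on the truncated region $\Omega_{R}$ so that it dominates $g$ on $\partial\Omega_{R}$, invoke minimality only on the bounded domain, and show the added energy is $o(1)$ as $R\to\infty$. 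So your citation-based route is shorter but less self-contained; if the referee or the authors want the lemma independent of a precise literature statement of minimality among all finite-energy competitors, the paper's truncation argument (or your ``density plus the smooth case'' remark, fleshed out) is what fills that in. On the other hand, your treatment of the inhomogeneous part is more explicit than the paper's: the weighted fundamental-theorem-of-calculus/Cauchy--Schwarz bound $|f(x,0)|^{2}\le \frac{R^{2s}}{2s}\int_{0}^{R}y^{1-2s}|\partial_{y}f|^{2}\,dy$ cleanly controls $\|f(\cdot,0)\|_{L^{2}}$, a point the paper buries in its normalization of the support to $[-1,1]^{n}\times[0,1]$ (so that, there too, the constant tacitly depends on the support scale -- your observation that the full $H^{s}$ bound cannot have a scale-free constant, and that this is harmless in the intended application at fixed $R$, is accurate). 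Your remark that nonnegativity of $f$ is not needed is also correct.
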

\begin{proof}
    Without loss of generality, we  assume that $\|f(x,0)\|_{H^{s}(\mathbb{R}^{n})}=1$ and that $f(x,y)$ is supported in $[-1,1]^{n}\times[0,1]$. Let $g(x,y)\geq0$ denote the Caffarelli-Silvestre extension of $f(x,0)$. Then
    \begin{equation*}
        \|f(x,0)\|_{H^{s}(\mathbb{R}^{n})}^{2}\leq C\int_{\mathbb{R}^{n+1}_{+}}y^{1-2s}|\nabla g|^{2}dxdy.
    \end{equation*}
   It suffices to show that for any extension $f(x,y)$,
    \begin{equation}\label{eq. arbitrary extension has larger energy}
        \int_{\mathbb{R}^{n+1}_{+}}y^{1-2s}|\nabla f|^{2}dxdy\geq\int_{\mathbb{R}^{n+1}_{+}}y^{1-2s}|\nabla g|^{2}dxdy.
    \end{equation}
    
  Since $f(x,0)$ is supported in $[-1,1]^n$, we have 
    \begin{equation*}
        |g(x,y)|\leq\frac{C_{1}}{(|x|^{2}+y^{2})^{n/2}},\quad\mbox{as long as }\max\{|x_{1}|,\cdots,|x_{n}|,y\}\geq2.
    \end{equation*}
    We define the following function $h_{R}$ in $[-R-1,R+1]^{n}\times[0,R+1]$:
    \begin{equation*}
        h_{R}(x,y)=\frac{C_{2}}{R^{n}}\Big(\max\{|x_{1}|,\cdots,|x_{n}|,y\}-R\Big)_{+}.
    \end{equation*}
   By choosing $C_{2}\gg C_{1}$, we have  $h_{R}(x,y)\geq g(x,y)$ on  the boundary $\partial([-R-1,R+1]^{n}\times[0,R+1])$. Define the region $\Omega_{R}$ by
    \begin{equation*}
        \Omega_{R}=\{(x,y)\in[-R-1,R+1]^{n}\times[0,R+1]:\ h_{R}(x,y)\leq g(x,y)\}.
    \end{equation*}
    In $\Omega_{R}$, set
    \begin{equation*}
        f_{R}(x,y)=f(x,y)+h_{R}(x,y).
    \end{equation*}
    At least for $R\geq3$, we have $f_{R}(x,y)\equiv f(x,y)$ in $[-2,2]^{n}\times[0,2]$.

    Since $g(x,y)$ is the Caffarelli-Silvestre extension of $f(x,0)$ and agrees with $f_R$ on $\partial \Omega_R$, the minimality of the extension yields
    \begin{equation*}
        \int_{\Omega_{R}}y^{1-2s}|\nabla f_{R}|^{2}dxdy\geq\int_{\Omega_{R}}y^{1-2s}|\nabla g|^{2}dxdy.
    \end{equation*}
    On the other hand, as $\displaystyle|\nabla h_{R}|\leq\frac{C}{R^{n}}$ outside $[-R,R]^{n}\times[0,R]$, for large $R$ we have 
    \begin{equation*}
        \int_{\Omega_{R}\setminus([-2,2]^{n}\times[0,2])}y^{1-2s}|\nabla f_{R}|^{2}dxdy\leq\frac{C}{R^{2n}}\Big(R^{1-2s}R^{n}+R^{n-1}\int_{0}^{R}y^{1-2s}dy\Big)\leq\frac{C}{R^{n+2s-1}}=o(1).
    \end{equation*}
   Taking $R \to \infty$, we obtain
    \begin{align*}
        \int_{\mathbb{R}^{n+1}_{+}}y^{1-2s}|\nabla f|^{2}dxdy=&\int_{\Omega_{R}}y^{1-2s}|\nabla f_{R}|^{2}dxdy-\int_{\Omega_{R}\setminus([-2,2]^{n}\times[0,2])}y^{1-2s}|\nabla f_{R}|^{2}dxdy\\
        \geq&\int_{\Omega_{R}}y^{1-2s}|\nabla g|^{2}dxdy-o(1).
    \end{align*}
    Letting $R \to \infty$, we deduce  \eqref{eq. arbitrary extension has larger energy} and complete the proof of Lemma~\ref{lem. trace}.
\end{proof}

\subsection{Range of the solution}
In the next step, we analyze the pointwise range of {\color{red}bounded}  positive solutions to system~\eqref{eq. main}, a key ingredient in establishing the subsequent compactness and regularity results.

\begin{lemma}\label{leuaddv}
Let $u,v>0$ be two globally bounded functions solving the system \eqref{eq. main}. Then there exists some $q>0$, such that
\[
q^{-1}\leq u(x),v(x)\leq q\quad \text{for all } x \in \mathbb{R}^n.
\]
\end{lemma}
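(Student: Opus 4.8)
The plan is to reduce everything to the scalar function $w:=u+v$. Adding the two equations of \eqref{eq. main} gives
\begin{equation*}
    (-\Delta)^{s}w = w\bigl(1+\omega-(u^{2}+v^{2}+\alpha uv)\bigr)\qquad\text{in }\mathbb{R}^{n},
\end{equation*}
while from $u^{2}+v^{2}+\alpha uv=w^{2}+(\alpha-2)uv$ and $0\le uv\le\tfrac14 w^{2}$ one reads off the elementary two-sided bound $\tfrac12 w^{2}\le u^{2}+v^{2}+\alpha uv\le C_{\alpha}w^{2}$ with $C_{\alpha}:=\max\{1,\tfrac{\alpha+2}{4}\}$. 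Since $u,v>0$ we always have $0<u,v\le w$, so it is enough to bound $w$ from above and $u,v$ from below. Throughout I use that, $(u,v)$ being bounded and solving \eqref{eq. main} with polynomial right-hand side, $u$ and $v$ are smooth with interior estimates depending only on their $L^{\infty}$ norms; hence any sequence of translates $\bigl(u(\cdot+p_{k}),v(\cdot+p_{k})\bigr)$ is precompact in $C^{1}_{\mathrm{loc}}$ and subsequential limits again solve \eqref{eq. main}. For the upper bound, let $M:=\sup w<\infty$, pick $p_{k}$ with $w(p_{k})\to M$, and pass to a translated limit $w_{\infty}$; then $w_{\infty}(0)=M=\max w_{\infty}$ gives $(-\Delta)^{s}w_{\infty}(0)\ge0$, while the equation together with $u_{\infty}^{2}+v_{\infty}^{2}+\alpha u_{\infty}v_{\infty}\ge\tfrac12 w_{\infty}^{2}$ gives $(-\Delta)^{s}w_{\infty}(0)\le M\bigl(1+\omega-\tfrac12 M^{2}\bigr)$, whence $M\le\sqrt{2(1+\omega)}=:\bar q$. (One may instead invoke Lemma~\ref{lem. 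Ricatti}, since $(w-\bar q)_{+}$ satisfies $(-\Delta)^{s}(w-\bar q)_{+}\le-(1+\omega)(w-\bar q)_{+}$.)

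The heart of the argument is the lower bound for $w$. Set $\delta:=\sqrt{\tfrac{1+\omega}{2C_{\alpha}}}>0$; where $w<\delta$ we have $u^{2}+v^{2}+\alpha uv<\tfrac{1+\omega}{2}$, hence $(-\Delta)^{s}w\ge\tfrac{1+\omega}{2}w$ on $\{w<\delta\}$. I claim $m:=\inf_{\mathbb{R}^{n}}w\ge\delta$. If $0<m<\delta$, taking $p_{k}$ with $w(p_{k})\to m$ and a translated limit $w_{\infty}$ with $w_{\infty}(0)=m=\min w_{\infty}$ yields $(-\Delta)^{s}w_{\infty}(0)\le0$, whereas the equation gives $(-\Delta)^{s}w_{\infty}(0)=m\bigl(1+\omega-(u_{\infty}^{2}+v_{\infty}^{2}+\alpha u_{\infty}v_{\infty})(0)\bigr)\ge m(1+\omega-C_{\alpha}m^{2})>\tfrac{1+\omega}{2}m>0$, a contradiction. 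The delicate case is $m=0$. By the interior Harnack inequality for nonnegative solutions of $(-\Delta)^{s}w=c(x)w$ with $c$ bounded (see \cite[Theorem~1.2]{dCKP14}), for every $r>0$ there is $C(r)$ with $\sup_{B_{r}(p)}w\le C(r)\,w(p)$ for all $p$. Fix $R$ so large that the first Dirichlet eigenvalue $\lambda_{1}^{s}(B_{R})=R^{-2s}\lambda_{1}^{s}(B_{1})<\tfrac{1+\omega}{2}$, and pick $p_{*}$ with $w(p_{*})<\delta/C(R)$, so that $w<\delta$ throughout $B_{R}(p_{*})$. Let $\varphi>0$ be the first Dirichlet eigenfunction of $(-\Delta)^{s}$ on $B_{R}(p_{*})$, extended by $0$, and let $t^{*}:=\sup\{t\ge0:\ t\varphi\le w\text{ in }\mathbb{R}^{n}\}\in(0,\infty)$. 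Since $w>0$ and $\varphi$ vanishes off $B_{R}(p_{*})$, the graphs of $t^{*}\varphi$ and $w$ touch at an interior point $\bar x\in B_{R}(p_{*})$; then $\zeta:=w-t^{*}\varphi\ge0$ with $\zeta(\bar x)=0$ forces $(-\Delta)^{s}\zeta(\bar x)\le0$, while $(-\Delta)^{s}\zeta(\bar x)=(-\Delta)^{s}w(\bar x)-t^{*}\lambda_{1}^{s}(B_{R})\varphi(\bar x)\ge\bigl(\tfrac{1+\omega}{2}-\lambda_{1}^{s}(B_{R})\bigr)w(\bar x)>0$. This contradiction gives $m\ge\delta$.

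Finally, I bound $u$ and $v$ below; by the $u\leftrightarrow v$ symmetry of \eqref{eq. main} it suffices to exclude $\inf u=0$. From the first equation, $(-\Delta)^{s}u=\bigl(1-u^{2}-(1+\alpha)v^{2}\bigr)u+\omega v$. If $u(p_{k})\to0$, then $v(p_{k})=w(p_{k})-u(p_{k})\ge\delta-u(p_{k})\ge\delta/2$ for $k$ large, so a translated limit $(u_{\infty},v_{\infty})$ has $u_{\infty}\ge0$, $u_{\infty}(0)=0=\min u_{\infty}$, and $v_{\infty}(0)\ge\delta/2$; hence $(-\Delta)^{s}u_{\infty}(0)\le0$, while the equation gives $(-\Delta)^{s}u_{\infty}(0)=\omega v_{\infty}(0)\ge\omega\delta/2>0$, a contradiction. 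Thus $\inf u,\inf v>0$ and $q:=\max\{\bar q,1/\inf u,1/\inf v\}$ works; running the last two steps with a sequence of solutions in place of a single one shows $q$ can be chosen depending only on $(\alpha,\omega)$.

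The one step I expect to be the real obstacle is the case $\inf w=0$: the soft "translate to an extremum, then read off the sign of $(-\Delta)^{s}$" argument handles the upper bound and the case $0<\inf w<\delta$ immediately, but degenerates at $\inf w=0$ because both sides of the $w$-equation vanish in the limit. What rescues it is that the linearization of \eqref{eq. main} at $(0,0)$ is $(-\Delta)^{s}w=(1+\omega)w$, which admits no positive solution in $\mathbb{R}^{n}$; the comparison with the first Dirichlet eigenfunction on a large ball is the concrete form of this fact. The remaining work is routine: the uniform interior and Harnack estimates for $(-\Delta)^{s}$ with bounded zeroth-order coefficient that make all the translated and rescaled limits legitimate, and the check that the touching points lie in the open balls.
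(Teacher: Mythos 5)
Your argument is correct, but it follows a partly different route from the paper's, so a comparison is worthwhile. The shared engine is the same: a touching/sliding comparison with the first Dirichlet eigenfunction of $(-\Delta)^{s}$ on a large ball, applied on a region where the relevant quantity is small enough that the zeroth-order coefficient beats the eigenvalue. The paper runs this argument \emph{directly} on $u$ (and symmetrically on $v$): it observes that on $\{u\le q_{2}\}$ one has $(-\Delta)^{s}u\ge\tfrac34 u$ (the term $\omega v-(1+\alpha)uv^{2}$ is nonnegative there), and its normalization $\varphi(\overline{x})=\|\varphi\|_{\infty}=1$ forces the touching point to lie automatically in $\{u\le q_{2}\}$, so no Harnack inequality and no compactness/translation limits are needed; the whole proof is pointwise. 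You instead first pass to the sum $w=u+v$, which satisfies the clean scalar equation $(-\Delta)^{s}w=w\bigl(1+\omega-(u^{2}+v^{2}+\alpha uv)\bigr)$, prove $\inf w\ge\delta$ by the eigenfunction comparison (using a Harnack inequality for $(-\Delta)^{s}$ with bounded zeroth-order coefficient to spread smallness of $w$ over a ball of radius $R$), and then recover the individual lower bounds on $u$ and $v$ by a translation-compactness argument exploiting the $\omega v$ term at a near-minimum of $u$. This buys a transparent scalar structure for the "linearization at $(0,0)$'' heuristic, at the price of two extra ingredients the paper avoids: the Harnack inequality (your citation of \cite[Theorem 1.2]{dCKP14} is for supersolutions of the pure fractional Laplacian; for the equation with a bounded potential a reference such as \cite{TX11} is more apt, though the fact is standard — and in fact you could avoid Harnack altogether by centering the ball at $p_{*}$, since then $t^{*}\le w(p_{*})<\delta$ and the touching point automatically satisfies $w(\bar x)\le t^{*}<\delta$, exactly the paper's trick) and the compactness of translates, which requires the routine uniform interior estimates you flag. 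Two small remarks: the upper bound for $u,v$ is already part of the hypothesis, so that step is redundant; and the closing claim that $q$ can be made to depend only on $(\alpha,\omega)$ is not needed for the lemma as stated (the paper's own constant depends on the $L^{\infty}$ bound of the given solution).
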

\begin{proof}
    Since $u$ and $v$ are both bounded, we have $u,v\leq q_{1}$ for some sufficiently large $q_{1}$. Next, we argue that
    \begin{equation*}
        u,v\geq\min\Big\{\frac{1}{2},\frac{\omega}{(1+\alpha)q_{1}}\Big\}=:q_{2},
    \end{equation*}
    which confirms the existence of the desired $q>0$.

    Suppose on the contrary that $\Omega:=\{x:\ u(x)\leq q_{2}\}$ is nonempty (and we will argue similarly for $v$), then for $x\in\Omega$, we have
    \begin{align}\label{eq. value of laplace if u too small}
        (-\Delta)^{s}u=&(1-u^{2})u+\omega v-(1+\alpha)u\cdot v^{2}\\
        \geq&(1-q_{2}^{2})u+\min\Big\{\omega\cdot0-(1+\alpha)u\cdot0^{2},\omega\cdot q_{1}-(1+\alpha)u\cdot q_{1}^{2}\Big\}\\
        \geq&\frac{3}{4}u+\min\Big\{0,\omega\cdot q_{1}-(1+\alpha)\cdot\frac{\omega}{(1+\alpha)q_{1}}\cdot q_{1}^{2}\Big\}=\frac{3}{4}u.
    \end{align}

    Let $\overline{x}\in\Omega$. Then, we let $R>0$ be chosen such that there exists an eigenfunction $\varphi(y)\geq0$ with $\varphi(\overline{x})=\|\varphi\|_{L^{\infty}(\mathbb{R}^{n})}=1$ solving
    \begin{equation*}
        \left\{\begin{aligned}
            &(-\Delta)^{s}\varphi=\frac{1}{2}\varphi&\mbox{in }&B_{R}(\overline{x}),\\
            &\varphi=0&\mbox{in }&B_{R}^{c}(x^{*}).
        \end{aligned}\right.
    \end{equation*}
    We increase $\lambda$ from $0$ until the graph of $\lambda\varphi$ first touches the graph of $u$. Denote this value by $\lambda^{*}$. Then, as $\lambda^{*}\varphi(\overline{x})\leq u(\overline{x})\leq q_{2}$, we have $0<\lambda^{*}\leq q_{2}$. Let $y^{*}$ be the contact point of $\lambda^{*}\varphi$ and $u$, then
    \begin{equation*}
        u(y^{*})=\lambda^{*}\varphi(y^{*})\leq\lambda^{*}\leq q_{2}.
    \end{equation*}
    In other words, we also have $y^{*}\in\Omega$. Using the estimate \eqref{eq. value of laplace if u too small}, we have
    \begin{equation*}
        (-\Delta)^{s}u(y^{*})\geq\frac{3}{4}u(y^{*}).
    \end{equation*}
    On the other hand, we have from the construction of $\varphi$ that
    \begin{equation*}
        (-\Delta)^{s}(\lambda^{*}\cdot\varphi)(y^{*})=\frac{1}{2}(\lambda^{*}\cdot\varphi)(y^{*}).
    \end{equation*}
    We then see that
    \begin{equation*}
        (-\Delta)^{s}(u-\lambda^{*}\cdot\varphi)(y^{*})\geq\frac{3}{4}u(y^{*})-\frac{1}{2}(\lambda^{*}\cdot\varphi)(y^{*})=\frac{1}{4}(\lambda^{*}\cdot\varphi)(y^{*})>0,
    \end{equation*}$u-$
    Hence, $(\sigma-\lambda^{*}\cdot\varphi)(y)$ attains its minimum at $y=y^{*}$, and 
    \begin{equation*}
        (-\Delta)^{s}(\sigma-\lambda^{*}\cdot\varphi)(y^{*})\geq\frac{1}{4}(\lambda^{*}\cdot\varphi)(y^{*})>0,
    \end{equation*}
    which contradicts the fact that $(\sigma-\lambda^{*}\cdot\varphi)$ attains its minimal value at $y^{*}$. This completes the proof of Lemma \ref{leuaddv}.
\end{proof}
Based on Lemma \ref{leuaddv}, $(-\Delta)^s \ln u$ and $(-\Delta)^s \ln v$ are well-defined. This allows us to provide a more precise characterization of the admissible range of $(u,v)$, which will be crucial for the subsequent estimates.

\begin{lemma}\label{lem. range}
 Let $(u,v)$ be a positive solution to \eqref{eq. main}. Then
 \begin{equation*}
     u^{2}+v^{2}\leq1\mbox{ and }uv\geq\frac{\omega}{\alpha}\mbox{ in }\mathbb{R}^n.
 \end{equation*}
\end{lemma}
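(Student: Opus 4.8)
The plan is to use the maximum-principle / sliding technique that was already exploited in Lemma~\ref{leuaddv}, applied to the two scalar quantities $w_{1}:=u^{2}+v^{2}$ and $w_{2}:=\ln(uv)$ (or equivalently $uv$), for which the nonlocal operator can be controlled. First I would compute, using \eqref{eq. main}, the equations satisfied by $u^{2}+v^{2}$ and by $\ln u+\ln v$. For the first, adding $u$ times the first equation to $v$ times the second gives
\[
\tfrac12(-\Delta)^{s}(u^{2}+v^{2})\;\le\;u(-\Delta)^{s}u+v(-\Delta)^{s}v
=-(u^{2}+v^{2})(u^{2}+v^{2}-1)-2\alpha u v\cdot uv+2\omega uv,
\]
where the inequality uses the pointwise convexity bound $(-\Delta)^{s}(\varphi^{2})\le 2\varphi(-\Delta)^{s}\varphi$ valid since $t\mapsto t^{2}$ is convex (this is the standard ``Kato-type'' inequality for the fractional Laplacian, which holds for bounded $C^{1,1}_{\rm loc}$ functions). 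Writing $P:=u^{2}+v^{2}$ and using $uv\le P/2$ together with $\omega<\alpha/2$, one sees that wherever $P>1$ the right-hand side is strictly negative, so $(-\Delta)^{s}P<0$ there. By Lemma~\ref{leuaddv} the function $P$ is bounded and bounded away from $0$; if $\sup P>1$ one reaches a contradiction by a sliding/touching argument against a fractional eigenfunction exactly as in the proof of Lemma~\ref{leuaddv} (or, more cheaply, by an application of Lemma~\ref{lem. Ricatti} to $(P-1)_{+}$ after checking it is a subsolution of $(-\Delta)^{s}w\le -\lambda w$ near the supremum). This yields $u^{2}+v^{2}\le1$.

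For the second inequality I would argue with $L:=\ln u+\ln v=\ln(uv)$, which is well-defined and bounded by the remark following Lemma~\ref{leuaddv}. Dividing the first equation of \eqref{eq. main} by $u$, the second by $v$, and adding, and using the logarithmic Kato-type inequality $(-\Delta)^{s}(\ln\varphi)\le \frac{1}{\varphi}(-\Delta)^{s}\varphi$ (again valid since $\log$ is concave, so $-\log$ is convex), we get
\[
(-\Delta)^{s}L\;\le\;\frac{(-\Delta)^{s}u}{u}+\frac{(-\Delta)^{s}v}{v}
=-2(u^{2}+v^{2}-1)-2\alpha uv+\omega\Big(\frac{u}{v}+\frac{v}{u}\Big).
\]
Now invoke the first inequality $u^{2}+v^{2}\le1$, already established, to drop the term $-2(u^{2}+v^{2}-1)\le0$, obtaining $(-\Delta)^{s}L\le -2\alpha uv+\omega(\tfrac{u}{v}+\tfrac{v}{u})$. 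Wherever $uv$ is \emph{large} this right side is negative; but we want a lower bound on $uv$, so instead I would run the argument at a point where $uv$ is near its infimum: set $m:=\inf uv>0$ (positive by Lemma~\ref{leuaddv}) and suppose $m<\omega/\alpha$. Near a near-minimum point, $\frac uv+\frac vu\ge 2$ and $uv\approx m$, so the source is $\ge -2\alpha m+2\omega=2(\omega-\alpha m)>0$, i.e. $(-\Delta)^{s}L$ is bounded below by a positive constant near the infimum of $L$. Comparing with a fractional eigenfunction on a large ball (the same device as in Lemma~\ref{leuaddv}, now pushing a small negative multiple of $\varphi$ up against $L$ from below, or equivalently applying Lemma~\ref{lem. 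Ricatti} to the bounded nonnegative function $(c_{0}-L+\inf L)$-type barrier) forces a contradiction unless the region where $uv$ is close to $m$ is empty. Hence $uv\ge\omega/\alpha$.

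The step I expect to be the main obstacle is making the two Kato-type inequalities for $(-\Delta)^{s}$ rigorous in the viscosity/weak sense used throughout the paper — i.e. justifying $(-\Delta)^{s}(\varphi^{2})\le 2\varphi(-\Delta)^{s}\varphi$ and $(-\Delta)^{s}(\ln\varphi)\le \varphi^{-1}(-\Delta)^{s}\varphi$ pointwise for the (only $C^{1,1}_{\rm loc}$) solutions, and then combining them with the touching-point analysis so that the ``wherever $P>1$'' and ``wherever $uv$ near $m$'' reasoning actually applies at the relevant contact point with the eigenfunction. Concretely, one must ensure that the near-extremum point where the eigenfunction touches $L$ (resp. $P$) really lies in the good region (this is where boundedness away from $0$ and the strict inequality $\omega<\alpha/2$ enter), and that the integral tails in the definition of $(-\Delta)^{s}$ do not spoil the sign; this is handled exactly as in Lemma~\ref{leuaddv} by an appropriate choice of the touching point and a Harnack-type estimate. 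Everything else is the same sliding machinery already in place.
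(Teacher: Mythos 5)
Your overall toolkit (the two Kato-type inequalities for $(-\Delta)^{s}$ applied to $u^{2}+v^{2}$ and $\ln u+\ln v$, followed by Lemma~\ref{lem. Ricatti}/touching arguments) is the same as the paper's, but the logical structure has a genuine gap: you try to prove $u^{2}+v^{2}\le1$ \emph{first and on its own}, and the key claim there is false. With $P:=u^{2}+v^{2}$, the source term is $-P(P-1)+2uv(\omega-\alpha uv)$, and the second summand is \emph{positive} whenever $uv<\omega/\alpha$, reaching $\omega^{2}/(2\alpha)$ at $uv=\omega/(2\alpha)$. The constraints $uv\le P/2$ and $\omega<\alpha/2$ do not exclude this: for $P$ slightly above $1$ and $uv$ near $\omega/(2\alpha)$ the right-hand side is strictly positive (e.g.\ $\alpha=2$, $\omega=0.9$, $P=1.01$), so ``wherever $P>1$ the right-hand side is strictly negative'' fails, and $(P-1)_{+}$ need not be a Riccati subsolution. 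Since your proof of $uv\ge\omega/\alpha$ in turn uses $P\le1$ to discard the term $-2(P-1)$, the two halves need each other and the sequential scheme is circular. This is precisely why the paper (following \cite{AFN21}) proves the \emph{coupled, non-sharp} estimates \eqref{eq. range estimate statement 1}--\eqref{eq. range estimate statement 2}, where the bound on $M=\sup(u^2+v^2)$ depends on $N_{*}$ (a truncated lower bound for $\ln(uv)$) and the bound on $N=\inf\ln(uv)$ depends on $M_{*}$, and then iterates these two estimates (Steps 3--4 of \cite[Proposition 1.2]{AFN21}) until they converge to the sharp constants $1$ and $\omega/\alpha$. Some version of this bootstrap is unavoidable; a one-pass argument in either order does not close.

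Two further slips in the second half: the logarithmic Kato inequality goes the other way — concavity of $\ln$ gives $(-\Delta)^{s}(\ln\varphi)\ \ge\ \varphi^{-1}(-\Delta)^{s}\varphi$ (this is also the direction the argument actually needs, since to prevent $L=\ln(uv)$ from dipping below $\ln(\omega/\alpha)$ you need a \emph{lower} bound on $(-\Delta)^{s}L$ in the bad region; with your stated $\le$ direction, ``the source is positive, hence $(-\Delta)^{s}L$ is bounded below'' is a non sequitur). Also the algebra is off: $\frac{v}{u}(\alpha uv-\omega)+\frac{u}{v}(\alpha uv-\omega)=\alpha(u^{2}+v^{2})-\omega\frac{u^{2}+v^{2}}{uv}$, so the correct identity is $\frac{(-\Delta)^{s}u}{u}+\frac{(-\Delta)^{s}v}{v}=2-(2+\alpha)(u^{2}+v^{2})+\omega\frac{u^{2}+v^{2}}{uv}$, not $-2(u^{2}+v^{2}-1)-2\alpha uv+\omega(\frac{u}{v}+\frac{v}{u})$. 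With these corrections your second step essentially reproduces the paper's proof of \eqref{eq. range estimate statement 2}, but it still only yields the non-sharp bound $uv\ge\frac{\omega}{\alpha+2-2/M_{*}}$ unless the iteration with the first estimate is carried out.
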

\begin{proof}
    % In the case $s=1$, we refer to \cite[Proposition 1.2]{AFN21}. 
    % In the case $s<1$, the method is similar. 
    Similar to \cite[Proposition 1.2]{AFN21}, we define
    \begin{equation*}
        \xi=u^{2}+v^{2},\quad\eta=\ln{u}+\ln{v}.
    \end{equation*}
    It suffices to show $\xi\leq1$ and $\eta\geq\ln{\frac{\omega}{\alpha}}$. To this end, we denote
    \begin{equation*}
        M=\sup_{\mathbb{R}^{n}}\xi,\quad M_{*}=\max\{M,1\},\quad N=\inf_{\mathbb{R}^{n}}\eta,\quad N_{*}=\min\{N,\ln{\frac{\omega}{\alpha}}\}.
    \end{equation*}
    
    We first prove the following two inequalities:
    \begin{equation}\label{eq. product rule}
        (-\Delta)^{s}\xi\leq2u(-\Delta)^{s}u+2v(-\Delta)^{s}v,\quad(-\Delta)^{s}\eta\geq\frac{(-\Delta)^{s}u}{u}+\frac{(-\Delta)^{s}v}{v}.
    \end{equation}
    Indeed, for a fixed $x \in \mathbb{R}^n$, we use the inequality $a^2 - b^2 \le 2a^2 - 2ab$ with $a = u(x)$ and $b = u(y)$ to obtain
    \begin{align*}
        &\Big((-\Delta)^{s}u^{2}\Big)(x)=C_{n,s}PV\int_{\mathbb{R}^{n}}\frac{u^{2}(x)-u^{2}(y)}{|x-y|^{n+2s}}\,dy\\
        \leq&C_{n,s} PV\int_{\mathbb{R}^{n}}\frac{2u^{2}(x)-2u(x)u(y)}{|x-y|^{n+2s}}\,dy=u(x)(-\Delta)^{s}u(x),
    \end{align*}
    which gives the first inequality in \eqref{eq. product rule}.  Similarly, using $\ln{a}\geq1-\frac{1}{a}$ for $a>0$, we  set $a=\frac{u(x)}{u(y)}$, which is well-defined and positive, then
    \begin{align*}
        &\Big((-\Delta)^{s}\ln{u}\Big)(x)=C_{n,s}PV\int_{\mathbb{R}^{n}}\frac{\ln{u}(x)-\ln{u}(y)}{|x-y|^{n+2s}}dy\\
        \geq&C_{n,s}PV\int_{\mathbb{R}^{n}}\frac{1-\frac{u(x)}{u(y)}}{|x-y|^{n+2s}}dy=\frac{(-\Delta)^{s}u(x)}{u(x)}.
    \end{align*}
    This implies the second inequality in \eqref{eq. product rule}.

    It suffices to show
    \begin{equation}\label{eq. range estimate statement 1}
        M\leq\frac{1}{2}(1+\sqrt{1-8s_{*}}),\quad\mbox{where }s_{*}=\min_{t\geq e^{N_{*}}}(\alpha t^{2}-\omega t),
    \end{equation}
    and
    \begin{equation}\label{eq. range estimate statement 2}
        N\geq\ln{\frac{\omega}{\alpha+2-\frac{2}{M_{*}}}}.
    \end{equation}
    Then, following steps 3 and 4 in \cite[Proposition 1.2]{AFN21}, the desired range estimates
\[
u^2 + v^2 \le 1, \quad uv \ge \frac{\omega}{\alpha}
\]
follow.

    \textbf{Proof of \eqref{eq. range estimate statement 1}.} Employing the first inequality of \eqref{eq. product rule}, we have
    \begin{align*}        
        (-\Delta)^{s}\xi\leq&-2(u^{2}+v^{2})(u^{2}+v^{2}-1)-4uv(\alpha uv-\omega)\\
        =&-2\xi^{2}-2\xi-4\alpha e^{2\eta}+4\omega e^{\eta}\leq-2\xi^{2}-2\xi-4s_{*},
    \end{align*}
    where we have used $\eta\geq N\geq N_{*}$ in the last step. If $\xi$ is somewhere larger than $\frac{1}{2}(1+\sqrt{1-8s_{*}})$, then in the region where $\xi>\frac{1}{2}(1+\sqrt{1-8s_{*}})$, we have
    \begin{align*}
        (-\Delta)^{s}\Big(\xi-\frac{1}{2}(1+\sqrt{1-8s_{*}})\Big)_{+}\leq&(-\Delta)^{s}\Big(\xi-\frac{1}{2}(1+\sqrt{1-8s_{*}})\Big)\\
        \leq&-2\Big(\xi-\frac{1}{2}(1+\sqrt{1-8s_{*}})\Big)\Big(\xi-\frac{1}{2}(1-\sqrt{1-8s_{*}})\Big)\\
        \leq&-2\Big(\xi-\frac{1}{2}(1+\sqrt{1-8s_{*}})\Big)^{2}.
    \end{align*}
    This contradicts Lemma~\ref{lem. Ricatti}.

    \textbf{Proof of \eqref{eq. range estimate statement 2}.} By the second inequality of \eqref{eq. product rule}, we have
    \begin{equation*}
        (-\Delta)^{s}\eta\geq2-(2+\alpha)(u^{2}+v^{2})+\omega\frac{u^{2}+v^{2}}{uv}=2-(2+\alpha)\xi+\omega\xi e^{-\eta}.
    \end{equation*}
    If $\eta$ is somewhere smaller than $\ln{\frac{\omega}{\alpha+2-\frac{2}{M_{*}}}}$, then in the region, setting $\eta\leq\ln{\frac{\omega}{\alpha+2-\frac{2}{M_{*}}}}$, we let $\displaystyle\widetilde{\eta}=\Big(\ln{\frac{\omega}{\alpha+2-\frac{2}{M_{*}}}}-\eta\Big)_{+}$ and have that
    \begin{align*}
        (-\Delta)^{s}\widetilde{\eta}\leq&(2+\alpha)\xi-2-(\alpha+2-\frac{2}{M_{*}})\xi e^{\widetilde{\eta}}=\frac{2\xi}{M_{*}}-2+(\alpha+2-\frac{2}{M_{*}})\xi\cdot(1-e^{\widetilde{\eta}})\\
        \leq&-(\alpha+2-\frac{2}{M_{*}})\xi\cdot\frac{\widetilde{\eta}^{2}}{2}\leq-\alpha\cdot\inf_{\mathbb{R}^{n}}\xi\cdot\frac{\widetilde{\eta}^{2}}{2}.
    \end{align*}
    Here, we have used the fact $M_{*}\geq1$. As we have shown that $u$ and $v$ are bounded from below in Lemma~\ref{leuaddv}, we see that $\alpha\cdot\inf_{\mathbb{R}^{n}}\xi>0$. Applying Lemma~\ref{lem. Ricatti} again yields a contradiction. This means tha $\eta\geq\ln{\frac{\omega}{\alpha+2-\frac{2}{M_{*}}}}$ everywhere. 
\end{proof}

Let us define an infinitesimal potential energy as follows:
\begin{equation}\label{eq. double well infinitesimal potential energy}
    W(u,v)=\frac{1}{4}(1-u^{2}-v^{2})^{2}+\frac{1}{2\alpha}(\omega-\alpha uv)^{2}.
\end{equation}
It follows that $-P(u,v)=\frac{\partial W}{\partial u}(u,v)$ and $-Q(u,v)=\frac{\partial W}{\partial v}(u,v)$. In other words, \eqref{eq. main} can be rewritten as
\begin{equation}\label{eq. simplified B-E}
    (-\Delta)^{s}u+W_{u}(u,v)=0,\quad(-\Delta)^{s}v+W_{v}(u,v)=0.
\end{equation}
Then, it is very natural to define a coupled Ginzburg-Landau energy functional. Let $\Omega$ be an arbitrary bounded open domain in $\mathbb{R}^{n+1}_{+}$. When $s<1$, we define:
\begin{equation}\label{eq. Ginzburg-Landau energy, s<1}
    J(u,v,\Omega)=\int_{\Omega}y^{1-2s}\frac{|\nabla u|^{2}+|\nabla v|^{2}}{2}dxdy+\int_{\Omega\cap\mathbb{R}^{n}}W(u,v)dx.
\end{equation}
It is easily seen that \eqref{eq. simplified B-E} is exactly the Euler-Lagrange equation of the Ginzburg-Landau energy \eqref{eq. Ginzburg-Landau energy, s<1}.

\section{{Linearization and the Schr\"odinger Operator}}

Before analyzing the stability and fine properties of solutions to the nonlinear system \eqref{eq. main}, it is convenient to study the associated linearized problem. 
By considering the Caffarelli--Silvestre extension of $(u,v)$, the nonlocal system can be transformed into a local, weighted elliptic system in the upper half-space. 
This approach allows us to rigorously define directional derivatives, formulate a Schr\"odinger-type operator, and introduce an associated quadratic form. 
Studying these linearized objects is a crucial step toward establishing Liouville-type results, monotonicity formulas, and rigidity properties for the original nonlinear system. 
In the sequel, we develop the extension method, define the linearized operator, and establish key inequalities that will be instrumental in our analysis.

\subsection{The extension method}

Let $(U,V)$ denote the Caffarelli-Silvestre extensions of $(u,v)$, respectively. Then equation~\eqref{eq. main} can be equivalently formulated as the following local problem in the upper half-space:
\begin{equation}\label{eq. simplified extension equation}
    \left\{
    \begin{aligned}
        &\mathrm{div}(y^{1-2s}\nabla U)=0, 
        &&\lim_{y\to0}y^{1-2s}U_{y}=W_{u}(U,V),\\
        &\mathrm{div}(y^{1-2s}\nabla V)=0, 
        &&\lim_{y\to0}y^{1-2s}V_{y}=W_{v}(U,V),
    \end{aligned}
    \right.
\end{equation}
where the potential $W(u,v)$ is given by
\begin{equation*}
    W(u,v)=\frac{1}{4d_{s}}(1-u^{2}-v^{2})^{2}
    +\frac{1}{2\alpha d_{s}}(\alpha uv-\omega)^{2}.
\end{equation*}

This formulation allows one to study the nonlocal system \eqref{eq. main} through the local weighted elliptic problem \eqref{eq. simplified extension equation}. In particular, we can now analyze its linearization by differentiating in a fixed spatial direction.

\subsection{The Schr\"odinger equation}

Let $\vec{e}\in\mathbb{R}^{n}\cap\partial B_{1}$ be a unit vector. Differentiating \eqref{eq. simplified extension equation} with respect to the $\vec{e}$-direction yields the linearized (or \emph{Schr\"odinger-type}) equation satisfied by the directional derivatives.  
Before making this precise, we introduce a suitable functional class.

\begin{definition}\label{def. the class C}
    We define the class $\mathcal{C}$ to consist of all function pairs $(f,g)$ satisfying:
    \begin{itemize}
        \item $f,g\in C^{\infty}_{\mathrm{loc}}(\mathbb{R}^{n+1}_{+})\cap L^{\infty}(\overline{\mathbb{R}^{n+1}_{+}})$;
        \item Their traces on $\mathbb{R}^{n}$ satisfy $f|_{\mathbb{R}^{n}},g|_{\mathbb{R}^{n}}\in C^{\infty}_{\mathrm{loc}}(\mathbb{R}^{n})$;
        \item They solve the weighted Laplace equations 
        \[
        \mathrm{div}(y^{1-2s}\nabla f)=\mathrm{div}(y^{1-2s}\nabla g)=0
        \quad \text{in } \mathbb{R}^{n+1}_{+}.
        \]
    \end{itemize}
\end{definition}

The following lemma ensures that differentiation in the tangential direction commutes with the nonlocal Neumann operator in the extension setting.

\begin{lemma}
    Assume that $u\in C^{2,1}_{\mathrm{loc}}(\mathbb{R}^{n})$ satisfies a global $C^{1,\alpha}$ bound, i.e., $\|u\|_{C^{1,\alpha}(\mathbb{R}^{n})}<\infty$. 
    Then for any $\vec{e}\in\mathbb{R}^{n}\cap\partial B_{1}$, the $\vec{e}$-directional derivative commutes with the operator $\displaystyle\lim_{y\to0}y^{1-2s}\partial_{y}$ applied to the Caffarelli-Silvestre extension $U$ of $u$.
\end{lemma}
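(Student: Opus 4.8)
The plan is to establish the commutation identity
\[
\lim_{y\to0}y^{1-2s}\partial_y(\partial_{\vec e}U)(x) = \partial_{\vec e}\Big(\lim_{y\to0}y^{1-2s}\partial_y U\Big)(x) \quad\text{for all }x\in\mathbb{R}^n,
\]
which amounts to showing that $(-\Delta)^s(\partial_{\vec e}u) = \partial_{\vec e}\big((-\Delta)^s u\big)$ under the stated regularity, and that this identity is compatible with the extension. First I would recall that the Caffarelli--Silvestre extension is given by convolution with the Poisson kernel, $U(x,y)=\int_{\mathbb{R}^n}P_s(x-z,y)u(z)\,dz$. Since $P_s(\cdot,y)\in L^1(\mathbb{R}^n)$ for each fixed $y>0$ and the global $C^{1,\alpha}$ bound on $u$ gives $\partial_{\vec e}u\in C^{0,\alpha}(\mathbb{R}^n)\cap L^\infty(\mathbb{R}^n)$, one checks that the $\vec e$-difference quotients of $u$ converge uniformly on compact sets and are dominated, so one may differentiate under the integral sign: $\partial_{\vec e}U(x,y)=\int_{\mathbb{R}^n}P_s(x-z,y)\,(\partial_{\vec e}u)(z)\,dz$. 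In other words, $\partial_{\vec e}U$ is precisely the Caffarelli--Silvestre extension of $\partial_{\vec e}u$; this uses only that convolution in the tangential variables commutes with tangential differentiation and that the translation $x\mapsto x+t\vec e$ acts the same way on $u$ and on $U$ (the Poisson kernel is translation-invariant in $x$).

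\textbf{Passing to the Neumann trace.} Once $\partial_{\vec e}U$ is identified as the extension of $\partial_{\vec e}u$, the identity follows from the general fact that the extension of any function $w\in C^{1,\alpha}_{\rm loc}(\mathbb{R}^n)\cap\mathcal{L}_{2s}(\mathbb{R}^n)$ realizes $(-\Delta)^s w$ as $-d_s\lim_{y\to0}y^{1-2s}\partial_y(\text{ext of }w)$, applied with $w=\partial_{\vec e}u$ (which inherits the needed regularity from the $C^{2,1}_{\rm loc}$ and global $C^{1,\alpha}$ hypotheses on $u$). Concretely, I would write the difference quotient $\frac{U(x+t\vec e,y)-U(x,y)}{t}$, which equals the extension of $\frac{u(\cdot+t\vec e)-u(\cdot)}{t}$ by translation-invariance, take $y^{1-2s}\partial_y$ of it, let $y\to0^+$ (this limit exists and is continuous in $x$ by the Caffarelli--Silvestre trace formula applied to each difference quotient, with uniform-in-$t$ control from the $C^{1,\alpha}$ bound), and finally let $t\to0$. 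The two limits can be interchanged because the relevant quantities are equicontinuous: the uniform $C^{1,\alpha}$ bound on $u$ gives a uniform modulus of continuity for the family $\{y^{1-2s}\partial_y U_t\}_{|t|\le1}$ as $y\to 0$, where $U_t$ is the extension of the $t$-difference quotient.

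\textbf{Main obstacle.} The delicate point is justifying the exchange of the two limits $y\to0^+$ and $t\to0$, equivalently controlling the Neumann data of the difference quotients uniformly in $t$. The source of difficulty is the degenerate/singular weight $y^{1-2s}$, which makes the $y\to0$ limit a subtle one; a naive dominated-convergence argument in the Poisson-kernel representation of $y^{1-2s}\partial_y U$ runs into the non-integrable tail of $\partial_y P_s$ near $y=0$. I would handle this by instead using the principal-value representation $(-\Delta)^s w(x)=C_{n,s}\,\mathrm{PV}\!\int \frac{w(x)-w(x')}{|x-x'|^{n+2s}}\,dx'$, which is valid for $w\in C^{1,1}_{\rm loc}\cap\mathcal{L}_{2s}$; split the integral into $B_1(x)$ and its complement. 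On $B_1(x)$ the global $C^{1,\alpha}$ bound (hence a local $C^{1,1}$-type control from $C^{2,1}_{\rm loc}$) gives an integrable majorant uniform in $t$ and lets one differentiate in $x$; on the complement the kernel $|x-x'|^{-n-2s}$ is integrable and $w\in L^\infty$ plus the $\mathcal{L}_{2s}$ tail bound again give uniformity. Combining the two pieces yields $(-\Delta)^s(\partial_{\vec e}u)=\partial_{\vec e}((-\Delta)^s u)$ pointwise and continuously, and then the Caffarelli--Silvestre trace formula converts this back into the claimed commutation of $\partial_{\vec e}$ with $\lim_{y\to0}y^{1-2s}\partial_y$, completing the proof.
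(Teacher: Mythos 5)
Your proposal is correct, but note that the paper itself states this lemma without any proof (it is used immediately afterwards in Lemma 3.3, where the identity $\partial_{\vec e}(-\Delta)^{s}u=(-\Delta)^{s}\partial_{\vec e}u$ is simply invoked), so there is no argument of the authors to compare against; your write-up supplies exactly the justification the paper leaves implicit. Your route is the natural one: the translation invariance of the Poisson kernel plus dominated convergence (with majorant $\|\nabla u\|_{L^{\infty}}P_{s}(x-z,y)$) identifies $\partial_{\vec e}U$ as the Caffarelli--Silvestre extension of $\partial_{\vec e}u$; since $\partial_{\vec e}u\in C^{1,1}_{\mathrm{loc}}\cap L^{\infty}$, the trace formula applies to it, and the whole statement reduces to $\partial_{\vec e}\big((-\Delta)^{s}u\big)=(-\Delta)^{s}\partial_{\vec e}u$, which you obtain from the pointwise representation by writing the difference quotient as $(-\Delta)^{s}w_{t}$ with $w_{t}(z)=\int_{0}^{1}\partial_{\vec e}u(z+\theta t\vec e)\,d\theta$, using the uniform-in-$t$ local second-difference bound $C|h|^{2}$ (this is where $C^{2,1}_{\mathrm{loc}}$, i.e.\ $\partial_{\vec e}u\in C^{1,1}_{\mathrm{loc}}$, enters) on $B_{1}$ and the global Lipschitz bound with the integrable tail $|h|^{-n-2s}$ outside. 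Two small remarks: the equicontinuity claim in your middle paragraph, resting on the global $C^{1,\alpha}$ bound alone, would not suffice when $\alpha\le 2s-1$, but your final PV argument replaces it with the correct uniform $C^{1,1}$-type majorant, so nothing is lost; and the parenthetical attributing that local $C^{1,1}$ control to the global $C^{1,\alpha}$ bound is a slip --- it comes from the $C^{2,1}_{\mathrm{loc}}$ hypothesis, as you in fact state.
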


We are now in a position to derive the linearized equation satisfied by the directional derivatives of $(U,V)$.

\begin{lemma}\label{lem. derivative satisfies the Schrodinger equation}
    Let $(u,v)$ be a solution to \eqref{eq. simplified extension equation}, and let $\vec{e}\in\mathbb{R}^{n}\cap\partial B_{1}$ be a unit vector. 
    Then $(\partial_{\vec{e}}U,\partial_{\vec{e}}V)$ satisfies the following Schr\"odinger-type system:
    \begin{equation}\label{eq. Schrodinger of derivative}
        \left\{
        \begin{aligned}
            &\mathrm{div}(y^{1-2s}\nabla\partial_{\vec{e}}U)=0,
            &&\lim_{y\to0}y^{1-2s}\partial_{\vec{e}}U_{y}
            =W_{uu}(U,V)\,\partial_{\vec{e}}U+W_{uv}(U,V)\,\partial_{\vec{e}}V,\\
            &\mathrm{div}(y^{1-2s}\nabla\partial_{\vec{e}}V)=0,
            &&\lim_{y\to0}y^{1-2s}\partial_{\vec{e}}V_{y}
            =W_{vu}(U,V)\,\partial_{\vec{e}}U+W_{vv}(U,V)\,\partial_{\vec{e}}V.
        \end{aligned}
        \right.
    \end{equation}
\end{lemma}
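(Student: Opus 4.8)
The plan is to differentiate the extension system \eqref{eq. simplified extension equation} termwise in the direction $\vec e$, treating the interior equation and the boundary (Neumann) condition separately. In the interior, the operator $\mathrm{div}(y^{1-2s}\nabla\cdot)$ has coefficients depending only on $y$, so it commutes with the tangential derivative $\partial_{\vec e}$; hence $\mathrm{div}(y^{1-2s}\nabla\partial_{\vec e}U)=0$ and likewise for $V$. This requires that $\partial_{\vec e}U$ be a legitimate weak solution, which follows from the interior smoothness of the extensions of globally bounded solutions (as recalled in the excerpt) together with the $C^{1,\alpha}$-type a priori bounds supplied by Lemma~\ref{leuaddv} and Lemma~\ref{lem. range}; one may cite the preceding commutation lemma for the rigorous justification that $\partial_{\vec e}$ passes inside the weak formulation.

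Next I would handle the boundary condition. By the commutation lemma immediately preceding this one, the tangential derivative $\partial_{\vec e}$ commutes with the conormal operator $\lim_{y\to0}y^{1-2s}\partial_y$ applied to $U$ and to $V$. Therefore, differentiating $\lim_{y\to0}y^{1-2s}U_y=W_u(U,V)$ in the $\vec e$-direction gives $\lim_{y\to0}y^{1-2s}\partial_{\vec e}U_y=\partial_{\vec e}\bigl(W_u(U,V)\bigr)$, and similarly for the second equation. It then remains only to expand the right-hand side by the chain rule: since $W_u$ is smooth in $(u,v)$ on the compact range where $(U,V)$ takes values (again by Lemma~\ref{leuaddv} and Lemma~\ref{lem. range}), we get
\begin{equation*}
\partial_{\vec e}\bigl(W_u(U,V)\bigr)=W_{uu}(U,V)\,\partial_{\vec e}U+W_{uv}(U,V)\,\partial_{\vec e}V,
\end{equation*}
and the analogous identity for $W_v$ with $W_{vu},W_{vv}$. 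Combining the interior identity with this boundary identity yields exactly system \eqref{eq. Schrodinger of derivative}.

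The main obstacle is not the formal differentiation but the justification that it is permissible: one must know that $\partial_{\vec e}U$ and $\partial_{\vec e}V$ are themselves admissible functions (bounded, locally smooth in the open half-space, with traces whose conormal derivative exists and is computed by the claimed formula). This is precisely what the preceding commutation lemma is designed to give, applied to $u$ and $v$; its hypothesis $\|u\|_{C^{1,\alpha}(\mathbb{R}^n)}<\infty$ is met because bounded solutions of \eqref{eq. main} enjoy uniform interior Schauder/De Giorgi--Nash--Moser estimates, the nonlinearity being smooth and bounded on the range region of Lemma~\ref{lem. range}. Once this regularity input is in place, the proof is a direct chain-rule computation with no further subtlety, so I would state the regularity citation carefully and then carry out the two-line differentiation for each equation.
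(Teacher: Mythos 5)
Your proposal is correct and follows essentially the same route as the paper: differentiate the interior weighted equation (where $\partial_{\vec e}$ commutes with $\mathrm{div}(y^{1-2s}\nabla\cdot)$ since the weight depends only on $y$), invoke the preceding commutation lemma to pass $\partial_{\vec e}$ through the conormal limit, and expand the boundary nonlinearity by the chain rule. Your additional remarks on the regularity hypotheses merely make explicit what the paper leaves implicit; no gap.
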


\begin{proof}
Differentiating \eqref{eq. simplified extension equation} in the direction of $\vec{e}$ shows that $\partial_{\vec{e}}U$ still satisfies the weighted Laplace equation 
\[
\mathrm{div}(y^{1-2s}\nabla \partial_{\vec{e}}U) = 0 \quad \text{in } \mathbb{R}^{n+1}_+.
\] 
Moreover,  at the bottom space $\mathbb{R}^{n}$, we have
\[
(-\Delta)^s \partial_{\vec{e}} u = \partial_{\vec{e}} (-\Delta)^s u = -\partial_{\vec{e}} W_u(u,v).
\] 
Applying the chain rule then yields the first line of \eqref{eq. Schrodinger of derivative}. The second line  of \eqref{eq. Schrodinger of derivative} follows analogously.
\end{proof}

From Lemma~\ref{lem. derivative satisfies the Schrodinger equation}, it is natural to introduce the associated Schr\"odinger operator.

\begin{definition}
    Let $(U,V)$ be a solution to \eqref{eq. simplified extension equation}. We define the Schr\"odinger operator corresponding to $(U,V)$ as $L$ in the following way:
    \begin{align}\label{eq. Schrodinger}
        L\begin{bmatrix}\xi\\\eta\end{bmatrix}:=&\Big\{-\lim_{y\to0}y^{1-2s}\partial_{y}+D^{2}W\Big\}\begin{bmatrix}\xi\\\eta\end{bmatrix}\\
        =&\begin{bmatrix}-\mathop{\lim}\limits_{y\to0}y^{1-2s}\xi_{y}+W_{uu}(U,V)\xi+W_{uv}(U,V)\eta\\-\mathop{\lim}\limits_{y\to0}y^{1-2s}\eta_{y}+W_{vu}(U,V)\xi+W_{vv}(U,V)\eta\end{bmatrix}.
    \end{align}
    Here, $(\xi,\eta)$ is a pair of functions in the class $\mathcal{C}$ (see Definition~\ref{def. the class C}).
\end{definition}

Next, we define a quadratic operator associated with ratios of functions:

\begin{definition}
Let $(\varphi,\psi)$ be a pair of functions in the class $\mathcal{C}$ satisfying $\varphi>0>\psi$ everywhere in $\overline{\mathbb{R}^{n+1}_+}$, and let $(\sigma,\tau)\in \mathcal{C}$ be defined by
\[
(\sigma,\tau) := \Big(\frac{\xi}{\varphi}, \frac{\eta}{\psi}\Big) \quad \text{in } \mathbb{R}^{n+1}_+.
\]
Then, the quadratic form $Q_{(\varphi,\psi)}$ is given by
\begin{equation}\label{eq. ratio quadratic operator}
Q_{(\varphi,\psi)}\begin{bmatrix}\sigma\\\tau\end{bmatrix} 
:= \sigma^2 \varphi \lim_{y\to0} (y^{1-2s} \varphi_y) - \xi \lim_{y\to0} (y^{1-2s} \xi_y) 
+ \tau^2 \psi \lim_{y\to0} (y^{1-2s} \psi_y) - \eta \lim_{y\to0} (y^{1-2s} \eta_y).
\end{equation}
\end{definition}

In our applications, the quadratic form \eqref{eq. ratio quadratic operator} is always negative semi-definite.

\begin{lemma}\label{lem. quadratic form Q is negative semi-definite}
Assume that $(\varphi,\psi)$ is a smooth pair on $\overline{\mathbb{R}^{n+1}_+}$ with $\varphi>0>\psi$ everywhere, satisfying
\[
\mathrm{div}(y^{1-2s} \nabla \varphi) = \mathrm{div}(y^{1-2s} \nabla \psi) = 0,
\]
and
\[
L \begin{bmatrix}\varphi\\\psi\end{bmatrix} \in \overline{\mathbb{R}_+} \times \overline{\mathbb{R}_-} \quad \text{(the fourth quadrant).}
\]
Let $(\xi,\eta) \in \mathcal{C}$ lie in the kernel of \eqref{eq. Schrodinger}, with $(\sigma,\tau) = (\xi/\varphi, \eta/\psi)$. Let $Q_{(\varphi,\psi)}$ be defined as in \eqref{eq. ratio quadratic operator}. Then, there exists a strictly positive function $\mathcal{P}(x)$ such that
\begin{equation}\label{eq. Q is negative semi-definite}
Q_{(\varphi,\psi)}\begin{bmatrix}\sigma\\\tau\end{bmatrix} \le - (\sigma - \tau)^2 \mathcal{P}(x) \quad \text{everywhere in } \mathbb{R}^n.
\end{equation}
\end{lemma}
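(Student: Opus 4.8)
The plan is to exploit the algebraic identity that expresses $Q_{(\varphi,\psi)}$ in terms of the Schrödinger operator $L$ acting on the components $(\varphi,\psi)$ and on the kernel element $(\xi,\eta)$. Write $\xi = \sigma\varphi$ and $\eta = \tau\psi$. First I would compute, using the Caffarelli–Silvestre boundary terms, the "Picone-type" identity
\[
\sigma^{2}\varphi\lim_{y\to0}(y^{1-2s}\varphi_{y}) - \xi\lim_{y\to0}(y^{1-2s}\xi_{y})
= -\sigma^{2}\varphi\big(L\begin{bmatrix}\varphi\\\psi\end{bmatrix}\big)_{1}
+ \xi\big(L\begin{bmatrix}\xi\\\eta\end{bmatrix}\big)_{1}
+ \text{(cross terms from }D^{2}W\text{)} + \text{(gradient remainder)},
\]
and analogously for the $\psi$-component. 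Since $(\xi,\eta)$ lies in the kernel of $L$, the term $\xi(L[\xi,\eta]^{T})_{1}+\eta(L[\xi,\eta]^{T})_{2}$ vanishes. Since $L[\varphi,\psi]^{T}$ lies in the fourth quadrant and $\varphi>0>\psi$, the terms $-\sigma^{2}\varphi(L[\varphi,\psi]^{T})_{1}$ and $-\tau^{2}\psi(L[\varphi,\psi]^{T})_{2}$ are both $\le 0$. What remains after these cancellations is a purely algebraic quadratic form in $(\sigma,\tau)$ with coefficients built from $D^{2}W(U,V)$, $\varphi$, $\psi$; I expect it to collapse to exactly $-(\sigma-\tau)^{2}\mathcal{P}(x)$ with
\[
\mathcal{P}(x) = -\varphi\psi\, W_{uv}(U(x,0),V(x,0)) \quad\text{on }\mathbb{R}^{n},
\]
after one checks the sign of $W_{uv}$ on the solution. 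The positivity of $\mathcal{P}$ should follow from the range estimate of Lemma 2.4 (so that $(u,v)$ stays in a region where the mixed second derivative of the double-well potential has the favorable sign) together with $\varphi>0>\psi$, which makes $-\varphi\psi>0$.

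More concretely, the key steps in order are: (i) expand $\xi\lim(y^{1-2s}\xi_{y})$ using $\xi=\sigma\varphi$ and the weighted-Laplace equation $\mathrm{div}(y^{1-2s}\nabla\xi)=0$ to produce, after the Dirichlet-to-Neumann limit, the term $\sigma^{2}\varphi\lim(y^{1-2s}\varphi_{y})$ plus a nonnegative gradient contribution $\lim_{y\to0}\int y^{1-2s}|\nabla\sigma|^{2}\varphi^{2}$ that—crucially—cancels against an identical term in the interior energy and hence does not survive into the boundary quadratic form; (ii) substitute the Neumann data from $L[\varphi,\psi]^{T}$ and $L[\xi,\eta]^{T}=0$, i.e. $\lim(y^{1-2s}\varphi_{y}) = -W_{uu}\varphi - W_{uv}\psi + (L[\varphi,\psi]^{T})_{1}$ and $\lim(y^{1-2s}\xi_{y}) = -W_{uu}\xi - W_{uv}\eta$; (iii) collect the four boundary contributions, observe that all terms quadratic in $\sigma$ alone and in $\tau$ alone cancel because $W_{uu}$ and $W_{vv}$ appear with matching coefficients $\sigma^{2}\varphi^{2}=\xi^{2}$ etc.; and (iv) the surviving cross terms assemble into $2\sigma\tau\,\varphi\psi\,W_{uv} - \sigma^{2}\varphi\psi\,W_{uv}\cdot(\psi/\varphi)\cdot\ldots$ — here one must be careful with how the off-diagonal entries pair up — which I expect to simplify to $W_{uv}\varphi\psi(\sigma-\tau)^{2}$, i.e. $-(\sigma-\tau)^{2}\mathcal{P}(x)$ with $\mathcal{P} = -W_{uv}\varphi\psi \ge 0$, plus the already-established nonpositive contributions from the fourth-quadrant hypothesis.

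The main obstacle I anticipate is bookkeeping the off-diagonal structure in step (iv): because $\sigma=\xi/\varphi$ but $\tau=\eta/\psi$, and the cross terms in $D^{2}W$ couple $\xi$ with $\eta$, one must verify that the mismatched normalizations still produce a perfect square $(\sigma-\tau)^{2}$ rather than a general indefinite form $a\sigma^{2}+2b\sigma\tau+c\tau^{2}$. I expect this to work precisely because of the special structure $W_{uv} = W_{vu}$ together with the identity relating the Neumann traces of $\varphi,\psi$ and the values $W_{uu},W_{uv},W_{vv}$ evaluated on the same solution $(U,V)$ whose derivative is $(\xi,\eta)$; morally, $\varphi$ and $\psi$ can be taken to be $\partial_{x_{3}}U>0$ and $\partial_{x_{3}}V<0$, which by Lemma 3.2 satisfy $L[\varphi,\psi]^{T}=0$, so the fourth-quadrant hypothesis is met with equality and the computation is exactly the Picone identity for the linearized operator. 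The secondary technical point is justifying the vanishing of the interior gradient remainder in the $y\to 0$ limit, which requires the boundedness and local smoothness built into the class $\mathcal{C}$ together with the trace inequality of Lemma 2.2 to control $\int y^{1-2s}|\nabla\sigma|^{2}\varphi^{2}$ near $y=0$.
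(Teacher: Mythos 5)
Your steps (ii)--(iv) are essentially the paper's proof, and your identification of $\mathcal{P}=-W_{uv}\varphi\psi$, strictly positive because Lemma~\ref{lem. range} gives $uv\ge\omega/\alpha$, hence $W_{uv}=(2+2\alpha)uv-\omega>0$ as in \eqref{eq. W_uv>0}, together with $\varphi>0>\psi$, is exactly right. Three corrections, however. First, step (i) is unnecessary and rests on a misreading: $Q_{(\varphi,\psi)}$ as defined in \eqref{eq. ratio quadratic operator} is a \emph{pointwise} expression on $\{y=0\}$; no integration is performed in this lemma, so no interior gradient term $\int y^{1-2s}\varphi^{2}|\nabla\sigma|^{2}$ ever appears, and neither the weighted Laplace equations nor the trace inequality of Lemma~\ref{lem. trace} are needed here (those ingredients belong to the Caccioppoli/Liouville argument of Lemma~\ref{lem. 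Liouville-type result with cutoff function}, where such interior terms do occur). Second, your Neumann substitutions have flipped signs: from the definition of $L$ one has $\lim_{y\to0}(y^{1-2s}\varphi_{y})=W_{uu}\varphi+W_{uv}\psi-\big(L[\varphi,\psi]^{T}\big)_{1}\le W_{uu}\varphi+W_{uv}\psi$, and the kernel condition gives $\lim_{y\to0}(y^{1-2s}\xi_{y})=W_{uu}\xi+W_{uv}\eta$ exactly, not the negatives you wrote. Third, the perfect-square ``bookkeeping'' you flag as the main obstacle is immediate once the signs are in place and requires no further identity relating the Neumann traces of $\varphi,\psi$ to $(U,V)$, only the symmetry $W_{uv}=W_{vu}$: multiplying $\lim_{y\to0}(y^{1-2s}\varphi_{y})\le W_{uu}\varphi+W_{uv}\psi$ by $\sigma^{2}\varphi\ge0$, multiplying $\lim_{y\to0}(y^{1-2s}\psi_{y})\ge W_{vu}\varphi+W_{vv}\psi$ by $\tau^{2}\psi\le0$ (which reverses it), and inserting the exact kernel identities for $\xi,\eta$, one finds
\begin{align*}
Q_{(\varphi,\psi)}\begin{bmatrix}\sigma\\\tau\end{bmatrix}
&\le \sigma^{2}\big(W_{uu}\varphi^{2}+W_{uv}\varphi\psi\big)-\big(W_{uu}\xi^{2}+W_{uv}\xi\eta\big)
+\tau^{2}\big(W_{vu}\varphi\psi+W_{vv}\psi^{2}\big)-\big(W_{vu}\xi\eta+W_{vv}\eta^{2}\big)\\
&= W_{uv}\varphi\psi\big(\sigma^{2}-2\sigma\tau+\tau^{2}\big)=-(\sigma-\tau)^{2}\mathcal{P}(x),
\end{align*}
since $\xi^{2}=\sigma^{2}\varphi^{2}$, $\eta^{2}=\tau^{2}\psi^{2}$, and $\xi\eta=\sigma\tau\varphi\psi$, so the $W_{uu}$ and $W_{vv}$ terms cancel identically. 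Finally, resist the temptation to reduce to the case $L[\varphi,\psi]^{T}=0$ ``with equality'': the lemma is later applied in Lemma~\ref{lem. classification of one limiting profile} with $(\varphi,\psi)$ an eigenpair with eigenvalue $\lambda_{\infty}\ge0$, where the fourth-quadrant hypothesis holds only as an inequality, which is precisely why the statement is formulated this way.
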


\begin{proof}
Using the positivity of $(\varphi,\psi)$, we have
\[
\varphi \lim_{y\to0} (y^{1-2s} \varphi_y) \le W_{uu} \varphi^2 + W_{uv} \varphi \psi, \quad
\psi \lim_{y\to0} (y^{1-2s} \psi_y) \ge W_{vu} \varphi \psi + W_{vv} \psi^2.
\]
Therefore,
\begin{align*}
Q_{(\varphi,\psi)}\begin{bmatrix}\sigma\\\tau\end{bmatrix} 
&\le \sigma^2 (W_{uu}\varphi^2 + W_{uv}\varphi\psi) - (W_{uu}\xi^2 + W_{uv}\xi\eta) \\
&\quad + \tau^2 (W_{vu}\varphi\psi + W_{vv}\psi^2) - (W_{vu}\xi\eta + W_{vv}\eta^2) \\
&= W_{uv} \varphi \psi (\sigma - \tau)^2 = - (\sigma - \tau)^2 \mathcal{P}(x),
\end{align*}
with $\mathcal{P}(x) := - W_{uv} \varphi \psi$.  

To see that $\mathcal{P}(x)$ is strictly positive, it suffices to check $W_{uv} > 0$. Direct computation gives
\begin{equation}\label{eq. W_uv>0}
W_{uv}(u,v) = (2 + 2 \alpha) uv - \omega \ge \Big(\frac{2}{\alpha} + 1\Big) \omega > 0,
\end{equation}
where we have used Lemma~\ref{lem. range} which asserts $uv \ge \omega / \alpha$.
\end{proof}

\subsection{The Caccioppoli inequality}

Building on the preceding constructions, we are now in a position to establish a Liouville-type result for solutions of the Schr\"odinger system. 
Such results are crucial for deriving rigidity properties, as they allow us to conclude that certain normalized ratios of solutions must be constant under appropriate energy growth conditions.

\begin{lemma}[Liouville-type result with cutoff function]\label{lem. Liouville-type result with cutoff function}
Assume that $\xi, \eta, \sigma, \tau, \varphi, \psi \in C^\infty(\mathbb{R}^{n+1}_{+})$, and they satisfy all assumptions in Lemma~\ref{lem. quadratic form Q is negative semi-definite}. Let
\begin{equation*}
    B_{r}^{+} := \{ (x,y) \in \mathbb{R}^n \times\mathbb{R}_{+}:\sqrt{|x|^{2}+y^{2}}< r \}.
\end{equation*}
Assume that the following growth condition holds:
\begin{equation}\label{eq. quadratic energy growth condition}
    \int_{B_{r}^{+}}y^{1-2s}(\xi^2+\eta^{2})\, dx dy \le C r^{2} \ln r,\quad\mbox{for all }r\geq2.
\end{equation}
Then, we have
\begin{equation*}
    \sigma(x,y)\equiv\tau(x,y)\equiv\mbox{constant},\quad\mbox{for }(x,y)\in\mathbb{R}^{n}\times\mathbb{R}_{+}.
\end{equation*}
\end{lemma}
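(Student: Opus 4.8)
The plan is to run the classical Sternberg--Zumbrun / Ambrosio--Cabr\'e argument adapted to the weighted half-space and to the coupled pair $(\sigma,\tau)$, using the negative semidefiniteness of $Q_{(\varphi,\psi)}$ as the single source of sign. First I would record the pointwise ``ground-state substitution'' identity: for a test function $\Phi$, writing $\xi=\sigma\varphi$ (resp.\ $\eta=\tau\psi$) and using $\mathrm{div}(y^{1-2s}\nabla\varphi)=0$, one gets
\[
\int_{\mathbb{R}^{n+1}_+} y^{1-2s}\,\varphi^{2}|\nabla\sigma|^{2}\Phi^{2}\,dxdy
= \int_{\mathbb{R}^{n+1}_+} y^{1-2s}\bigl(|\nabla\xi|^{2}-\tfrac12\nabla(\sigma^{2})\cdot\nabla(\varphi^{2})\bigr)\Phi^{2}\,dxdy,
\]
and after integrating by parts the cross term and using the weighted equation for $\varphi$, the boundary contribution at $y=0$ produces exactly $-\int_{\mathbb{R}^n}\xi\lim_{y\to0}(y^{1-2s}\xi_y)\Phi^2 + \sigma^2\varphi\lim_{y\to0}(y^{1-2s}\varphi_y)\Phi^2$, i.e.\ the first two terms of $Q_{(\varphi,\psi)}$. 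Doing the same for $\eta,\tau,\psi$ and adding, the bottom boundary terms assemble precisely into $\int_{\mathbb{R}^n}Q_{(\varphi,\psi)}[\sigma,\tau]\,\Phi^2\,dx$ (here I use that $(\xi,\eta)$ lies in the kernel of $L$, so the interior Neumann data are consumed by the Schr\"odinger system). Combined with Lemma~\ref{lem. quadratic form Q is negative semi-definite}, this gives the master inequality
\[
\int_{\mathbb{R}^{n+1}_+} y^{1-2s}\bigl(\varphi^{2}|\nabla\sigma|^{2}+\psi^{2}|\nabla\tau|^{2}\bigr)\Phi^{2}\,dxdy
\;\le\; -\int_{\mathbb{R}^n}(\sigma-\tau)^{2}\mathcal{P}(x)\,\Phi^{2}\,dx
\;+\; (\text{gradient-of-}\Phi\text{ terms}),
\]
where the error terms are bounded, via Cauchy--Schwarz, by $\int y^{1-2s}(\xi^2+\eta^2)|\nabla\Phi|^2$.

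Second I would choose the standard logarithmic cutoff: $\Phi=1$ on $B_{\sqrt r}^+$, $\Phi=0$ outside $B_r^+$, with $|\nabla\Phi|\lesssim \frac{1}{|(x,y)|\ln r}$ on the annulus $B_r^+\setminus B_{\sqrt r}^+$; then by the growth hypothesis \eqref{eq. quadratic energy growth condition},
\[
\int_{\mathbb{R}^{n+1}_+} y^{1-2s}(\xi^2+\eta^2)|\nabla\Phi|^2\,dxdy
\;\lesssim\; \frac{1}{(\ln r)^2}\sum_{k}\frac{1}{4^{k}}\int_{B_{2^{k+1}\!,\,2^k}^+}\!\! y^{1-2s}(\xi^2+\eta^2)
\;\lesssim\; \frac{1}{(\ln r)^{2}}\cdot \ln r \cdot(\#\text{dyadic scales in }[\sqrt r,r]),
\]
which is $O(1/\ln r)$ — dyadic decomposition of the annulus is what converts the $r^2\ln r$ bound into a vanishing quantity. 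Letting $r\to\infty$ forces both $\int y^{1-2s}\varphi^2|\nabla\sigma|^2=\int y^{1-2s}\psi^2|\nabla\tau|^2=0$ and $\int_{\mathbb{R}^n}(\sigma-\tau)^2\mathcal{P}=0$; since $\varphi>0>\psi$ are smooth and $\mathcal{P}>0$ (Lemma~\ref{lem. quadratic form Q is negative semi-definite}), we conclude $\nabla\sigma\equiv0$ in $\mathbb{R}^{n+1}_+$, hence $\sigma$ is a constant $c$, and then $(\sigma-\tau)^2\mathcal{P}=0$ a.e.\ on $\mathbb{R}^n$ gives $\tau=c$ on the trace; finally $\nabla\tau\equiv0$ propagates this to all of $\mathbb{R}^{n+1}_+$, so $\sigma\equiv\tau\equiv$ const.

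The main obstacle is making the integration by parts rigorous at the degenerate boundary $y=0$: one must justify that $y^{1-2s}\varphi_y$ and $y^{1-2s}\xi_y$ have well-defined traces, that no extra boundary term at $y=0$ is dropped, and that the weighted Sobolev manipulations (multiplying by $\Phi^2$, expanding $|\nabla(\sigma\varphi)|^2$) are legitimate given only $C^\infty(\mathbb{R}^{n+1}_+)\cap L^\infty$ regularity up to the boundary of the ingredients. The cleanest route is to first prove the identity on $\{y>\varepsilon\}$ with an extra boundary term on $\{y=\varepsilon\}$, use the $A_2$-weight elliptic estimates (the functions in $\mathcal C$ are, by Caffarelli--Silvestre theory, as regular as needed and $y^{1-2s}\partial_y$ extends continuously to $y=0$), and pass $\varepsilon\to0$ controlling the $\{y=\varepsilon\}$ term by the local energy bound — this is the step where the hypotheses of Lemma~\ref{lem. quadratic form Q is negative semi-definite} (in particular smoothness of $\varphi,\psi$ up to $\overline{\mathbb{R}^{n+1}_+}$ and the sign of $L[\varphi,\psi]$) are genuinely used. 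A secondary technical point is that $\varphi^2|\nabla\sigma|^2$ and $\psi^2|\nabla\tau|^2$ must be shown locally integrable against $y^{1-2s}$ before the cutoff argument even makes sense; this follows from $\varphi,\psi\in L^\infty$ together with the finiteness of $\int_{B_r^+}y^{1-2s}|\nabla\xi|^2$, which is in turn a consequence of \eqref{eq. quadratic energy growth condition} via a Caccioppoli inequality for the weighted-harmonic functions $\xi,\eta$.
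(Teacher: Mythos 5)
Your overall scheme (the ground–state substitution $\xi=\sigma\varphi$, $\eta=\tau\psi$, the assembly of the $y=0$ boundary terms into $Q_{(\varphi,\psi)}$, the use of its negative semidefiniteness and of $\mathcal{P}>0$, and a logarithmic cutoff) is the same as the paper's, which works with the equivalent identities $\operatorname{div}(\varphi^{2}y^{1-2s}\nabla\sigma)=\operatorname{div}(\psi^{2}y^{1-2s}\nabla\tau)=0$. However, there is a genuine quantitative gap at the decisive step: the claim that the cutoff error is $O(1/\ln r)$ is false under the stated growth $\int_{B_r^+}y^{1-2s}(\xi^2+\eta^2)\le Cr^2\ln r$. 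With your cutoff ($\Phi=1$ on $B_{\sqrt r}^+$, $\Phi=0$ outside $B_r^+$, $|\nabla\Phi|\lesssim(\rho\ln r)^{-1}$), each dyadic annulus at scale $2^k\in[\sqrt r,r]$ contributes $\tfrac{1}{4^k}\mu(B_{2^{k+1}}^+)\lesssim k\lesssim\ln r$, and there are $\Theta(\ln r)$ such scales, so the total is $\tfrac{1}{(\ln r)^2}\cdot\ln r\cdot\Theta(\ln r)=\Theta(1)$, not $O(1/\ln r)$; equivalently, $\int_{\sqrt r}^{r}\tfrac{\ln t}{t}\,dt=\tfrac38(\ln r)^2$ exactly cancels the $(\ln r)^{-2}$ from the cutoff. (Your bound would be correct if the hypothesis were $Cr^2$ without the logarithm; the extra $\ln r$ is precisely the borderline case.) Consequently ``letting $r\to\infty$'' in your master inequality only yields finiteness, not vanishing, of $\int y^{1-2s}(\varphi^2|\nabla\sigma|^2+\psi^2|\nabla\tau|^2)$ and of $\int(\sigma-\tau)^2\mathcal{P}$, so the proof as written does not close.

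The fix is the two-pass Ambrosio--Cabr\'e argument, which is exactly what the paper does: apply Cauchy--Schwarz \emph{without} absorbing, keeping the first factor restricted to the region where $\nabla\Phi$ is supported (contained in $(B_R^+)^c$), so that
\begin{equation*}
    D_R+\int_{B_R\cap\{y=0\}}(\sigma-\tau)^2\mathcal{P}\,dx
    \;\le\; 2\Big(\int_{(B_R^+)^c}\zeta^2 y^{1-2s}(\varphi^2|\nabla\sigma|^2+\psi^2|\nabla\tau|^2)\Big)^{1/2} C^{1/2},
\end{equation*}
with $D_R$ the cutoff Dirichlet energy. A first pass (bounding the right-hand factor by $D_R$ itself) gives $D_R\le 4C$ uniformly, hence the global weighted Dirichlet integral is finite; a second pass then uses that the right-hand factor is the tail of a convergent integral and tends to $0$ as $R\to\infty$, which forces both the Dirichlet energy and the trace term to vanish. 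With that insertion your conclusion ($\sigma$ constant, $\tau=\sigma$ on the trace, then $\nabla\tau\equiv0$) goes through as in the paper; your remarks on justifying the integration by parts at the degenerate boundary are sensible but are not the substantive issue.
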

\begin{proof}
Since $\xi = \varphi \sigma$, and 
\[
\operatorname{div} (y^{1-2s} \nabla \varphi) = \operatorname{div} (y^{1-2s} \nabla \xi) = 0 \quad \mbox{in } \mathbb{R}^n \times \mathbb{R}_+,
\]
we compute
\begin{equation}\label{eq:dive1}
\begin{aligned}
0 &= \varphi \operatorname{div}(y^{1-2s} \nabla (\sigma \varphi)) \\
  &= \varphi \Big( \sigma \operatorname{div}(y^{1-2s} \nabla \varphi) + 2 y^{1-2s} \nabla \sigma \cdot \nabla \varphi + \varphi \operatorname{div}(y^{1-2s} \nabla \sigma) \Big) \\
  &= y^{1-2s} \nabla \sigma \cdot \nabla \varphi^2 + \varphi^2 \operatorname{div}(y^{1-2s} \nabla \sigma) \\
  &= \operatorname{div}(\varphi^2 y^{1-2s} \nabla \sigma),
\end{aligned}
\end{equation}
in $\mathbb{R}^n \times \mathbb{R}_+$. Similarly, since $\eta = \psi \tau$ and 
$$\operatorname{div}(y^{1-2s} \nabla \psi) = \operatorname{div}(y^{1-2s} \nabla \eta) = 0\quad \mbox{in } \mathbb{R}^n \times \mathbb{R}_+,$$ 
we obtain
\begin{equation}\label{eq:dive2}
\operatorname{div}(\psi^2 y^{1-2s} \nabla \tau) = 0 \quad \mbox{in } \mathbb{R}^n \times \mathbb{R}_+.
\end{equation}

For $R > 2$, let $\zeta$ be a Lipschitz cut-off function
\begin{equation*}
    \zeta(x,y)=\left\{\begin{aligned}
        &1,&\mbox{if }&\sqrt{|x|^{2}+y^{2}}\le R,\\
        &2-\dfrac{\ln(\sqrt{|x|^{2}+y^{2}})}{\ln R},&\mbox{if }&R <\sqrt{|x|^{2}+y^{2}}< R^2,\\
        &0,&\mbox{if }&\sqrt{|x|^{2}+y^{2}}\ge R^2,
    \end{aligned}\right.
\end{equation*}
so that $\operatorname{supp}(\nabla\zeta)$ lies in the region where $R <\sqrt{|x|^{2}+y^{2}}< R^2$ and
\begin{equation}\label{eq:grad-bound}
|\nabla\zeta(x,y)|=\frac{\chi_{B_{R^{2}}^{+}}-\chi_{B_{R}^{+}}}{\sqrt{|x|^{2}+y^{2}}\cdot\ln{R}}.
\end{equation}

Multiplying  equations \eqref{eq:dive1} and \eqref{eq:dive2} by $ \zeta^2 \sigma$  and $ \zeta^2 \tau$, respectively, summing  the resulting identities,   and  integrating over $\mathbb{R}^n \times(0, +\infty)$ yields 
\begin{equation}\label{eq3.11}
\begin{aligned}
0=&\int\zeta^2 \sigma \operatorname{div}(\varphi^2 y^{1-2s} \nabla \sigma) \,dxdy+\int\zeta^2 \tau \operatorname{div}(\psi^2 y^{1-2s} \nabla \tau) \,dxdy\\
=&-\int y^{1-2s} \varphi^2  \nabla \sigma \cdot \nabla (\zeta^2 \sigma)\,dxdy-\int y^{1-2s} \psi^2  \nabla \tau \cdot \nabla (\zeta^2 \tau) \,dxdy\\
&-\int_{\{y=0\}} \zeta^2(x,0)\Big\{ \lim_{y\to 0}(\sigma \varphi^2 y^{1-2s} \sigma_y) + \lim_{y\to 0}(\tau\psi^2 y^{1-2s} \tau_y)\Big\} \,dx\\
\leq&-\int y^{1-2s} \varphi^2  \nabla \sigma \cdot \nabla (\zeta^2 \sigma)
\,dxdy-\int y^{1-2s} \psi^2  \nabla \tau \cdot \nabla (\zeta^2 \tau) \,dxdy\\
&-\int_{\{y=0\}} \zeta^2(x, 0) \Big(\sigma(x, 0)-\tau(x, 0)\Big)^2 \mathcal{P}(x) \,dx,
\end{aligned}
\end{equation}
where we have used \eqref{eq. Q is negative semi-definite} in the last step. Noting that $\zeta^{2}\geq\chi_{B_{R}^{+}}$, and that
\begin{equation*}
    \nabla\sigma\cdot\nabla(\zeta^{2}\sigma)\geq|\zeta\nabla\sigma|^2-2|\zeta\nabla\sigma|\cdot|\sigma\nabla\zeta|,\quad\nabla\tau\cdot \nabla(\zeta^{2}\tau)\geq|\zeta\nabla\tau|^2-2|\zeta\nabla\tau|\cdot|\tau\nabla\zeta|,
\end{equation*}
we then obtain from \eqref{eq3.11} and the fact that $\operatorname{supp}(\nabla\zeta)\subseteq(B_{R}^{+})^{c}$ that
\begin{equation}\label{eq3.11'}
\begin{aligned}
    &\int y^{1-2s}\zeta^{2}(\varphi^{2}|\nabla\sigma|^{2}+\psi^{2}|\nabla\tau|^{2})\,dxdy+\int_{\{y=0\}\cap B_{R}}|\sigma-\tau|^{2}\mathcal{P}(x)\,dx\\
    \leq&2\int y^{1-2s}(|\zeta\nabla\sigma|\cdot|\sigma\nabla\zeta|+|\zeta\nabla\tau|\cdot|\tau\nabla\zeta|)\,dxdy\\
    \leq&2\Big\{\int_{(B_{R}^{+})^{c}}y^{1-2s}\zeta^{2}(\varphi^{2}|\nabla\sigma|^{2}+\psi^{2}|\nabla\tau|^{2})\,dxdy\Big\}^{\frac{1}{2}}\Big\{\int y^{1-2s}|\nabla\zeta|^{2}(\xi^{2}+\eta^{2})\,dxdy\Big\}^{\frac{1}{2}}.
\end{aligned}
\end{equation}
In order to estimate $\displaystyle\int y^{1-2s}|\nabla\zeta|^{2}(\xi^{2}+\eta^{2})\,dxdy$, we introduce the measure
$$d \mu:=y^{1-2s}(\xi^2+\eta^2) dxdy.$$
With \eqref{eq. quadratic energy growth condition}, we see $\mu(B_{r}^{+})\leq C r^{2}\ln{r}$ for all $r\geq2$.
By \eqref{eq:grad-bound}, we have
\begin{equation*}
    \int y^{1-2s}|\nabla\zeta|^{2}(\xi^{2}+\eta^{2})\,dxdy=\int_{B_{R^{2}}^{+}\setminus B_{R}^{+}}\frac{d\mu}{(|x|^{2}+y^{2})\cdot(\ln{R})^{2}}.
\end{equation*}
For every $t\geq0$, we define
\begin{equation*}
    A(t):=\mu\Big((B_{R^{2}}^{+}\setminus B_{R}^{+})\cap\{\frac{1}{|x|^{2}+y^{2}}\geq t\}\Big)=\mu\Big((B_{R^{2}}^{+}\setminus B_{R}^{+})\cap B_{1/\sqrt{t}}^{+}\Big),
\end{equation*}
it follows from $\mu(B_{r}^{+})\leq C r^{2}\ln{r}$ that
\begin{equation*}
    A(t)\leq\left\{\begin{aligned}
        &C R^{4}\ln{R},&\mbox{if }&t\leq R^{-4},\\
        &\frac{C}{t}\ln{\frac{1}{t}},&\mbox{if }&R^{-4}\leq t\leq R^{-2},\\
        &0,&\mbox{if }&t>R^{-2}.
    \end{aligned}\right.
\end{equation*}
Consequently,
\begin{equation}\label{eq. estimate lebesgue estimate}
    \int y^{1-2s}|\nabla\zeta|^{2}(\xi^{2}+\eta^{2})\,dxdy=\int_{0}^{\infty}\frac{A(t)}{(\ln{R})^{2}}dt\leq C.
\end{equation}
Then, by combining \eqref{eq3.11'} with \eqref{eq. estimate lebesgue estimate}, we see that
\begin{equation*}
    \int_{B_{R}^{+}}y^{1-2s}(\varphi^{2}|\nabla\sigma|^{2}+\psi^{2}|\nabla\tau|^{2})\,dxdy\leq\int y^{1-2s}\zeta^{2}(\varphi^{2}|\nabla\sigma|^{2}+\psi^{2}|\nabla\tau|^{2})\,dxdy\leq C,
\end{equation*}
and we send $R\to\infty$ and have that
\begin{equation*}
    \int_{\mathbb{R}^{n}\times\mathbb{R}_{+}}y^{1-2s}(\varphi^{2}|\nabla\sigma|^{2}+\psi^{2}|\nabla\tau|^{2})\,dxdy\leq C.
\end{equation*}

Back to \eqref{eq3.11'}, using the integrability obtained above, we have
\begin{equation*}
    \lim_{R\to0}\int_{(B_{R}^{+})^{c}}y^{1-2s}(\varphi^{2}|\nabla\sigma|^{2}+\psi^{2}|\nabla\tau|^{2})\,dxdy=0.
\end{equation*}
so we get
\begin{equation*}
    \int_{B_{R}^{+}}y^{1-2s}\zeta^{2}(\varphi^{2}|\nabla\sigma|^{2}+\psi^{2}|\nabla\tau|^{2})\,dxdy+\int_{\{y=0\}\cap B_{R}}|\sigma-\tau|^{2}\mathcal{P}(x)\,dx\to0,\quad\mbox{as }R\to0.
\end{equation*}
Therefore, $\sigma$ and $\tau$ are constants. Besides, since $\mathcal{P}(x)$ is strictly positive, we also have
\begin{equation*}
\sigma(x, 0) = \tau(x, 0), \quad x \in \mathbb{R}^n.
\end{equation*}
In conclusion, $\sigma\equiv\tau\equiv\mbox{constant}$ in $\mathbb{R}^{n}\times\mathbb{R}_{+}$. This completes the proof of Lemma \ref{lem. Liouville-type result with cutoff function}.
\end{proof}
As an immediate consequence of the Liouville-type result in Lemma~\ref{lem. Liouville-type result with cutoff function}, we can now derive a more concrete classification in the setting of one dimensional lower.
\begin{corollary}\label{cor. n=2 is more obvious}
    Assume that $n\leq2$ with $\frac{1}{2}\leq s<1$, or $n\leq1$ with $0<s<\frac{1}{2}$. Assume that $(U,V)$ is a solution to \eqref{eq. simplified extension equation} in $\mathbb{R}^{n}\times\mathbb{R}_{+}$, such that there exists a pair $(\varphi,\psi)$, with $\varphi>0>\psi$ everywhere and $L\begin{bmatrix}\varphi\\\psi\end{bmatrix}$ always belonging to $\overline{\mathbb{R}_{+}}\times\overline{\mathbb{R}_{-}}$. Then
    \begin{itemize}
        \item Either: $(u,v)$ is constant.
        \item Or: There exists a unit vector $\vec{\nu}$ such that $\Big(u(x),v(x)\Big)=\Big(u(x\cdot\vec{\nu}),v(x\cdot\vec{\nu})\Big)$, with $\partial_{\vec{\nu}}u>0>\partial_{\vec{\nu}}v$ along that direction.
    \end{itemize}
\end{corollary}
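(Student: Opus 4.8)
The plan is to apply Lemma~\ref{lem. Liouville-type result with cutoff function} to the directional derivatives of $(U,V)$ and then feed the resulting rigidity back into the system. First I would fix an arbitrary unit vector $\vec e\in\mathbb{R}^n\cap\partial B_1$ and set $(\xi,\eta):=(\partial_{\vec e}U,\partial_{\vec e}V)$; by Lemma~\ref{lem. derivative satisfies the Schrodinger equation} this pair lies in the kernel of the Schr\"odinger operator $L$ from \eqref{eq. Schrodinger}, and it belongs to the class $\mathcal{C}$ because the global $C^{1,\alpha}$ bounds on $(u,v)$ (coming from Lemma~\ref{leuaddv}, Lemma~\ref{lem. range} and standard elliptic regularity for the degenerate equation) make the extensions and their tangential derivatives smooth and bounded up to the boundary. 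The reference pair $(\varphi,\psi)$ is supplied by hypothesis with $\varphi>0>\psi$ and $L[\varphi,\psi]^{T}$ in the fourth quadrant, so all hypotheses of Lemma~\ref{lem. quadratic form Q is negative semi-definite}, and hence of Lemma~\ref{lem. Liouville-type result with cutoff function}, are met once the energy growth \eqref{eq. quadratic energy growth condition} is verified.

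The key step is therefore the Caccioppoli-type energy bound $\int_{B_r^+}y^{1-2s}(\xi^2+\eta^2)\,dxdy\le Cr^2\ln r$. Since $|\xi|,|\eta|\le\|\nabla U\|_{L^\infty}+\|\nabla V\|_{L^\infty}$ are uniformly bounded, the weighted volume $\int_{B_r^+}y^{1-2s}\,dxdy$ is of order $r^{n+2-2s}$, which already suffices when $n+2-2s\le 2$, i.e. $n\le 2s$. For $s\ge\frac12$ this forces $n\le 1$, and for $s<\frac12$ it would force $n\le 0$; so a crude volume count is \emph{not} enough and one must exploit the equation. The right tool is the standard Caccioppoli inequality obtained by testing the extension equation \eqref{eq. simplified extension equation} (after subtracting the constant solution $(a,b)$, so that the traces decay or at least stay bounded while the nonlinear forcing is controlled by $W$) against $\zeta^2(U-a)$ and $\zeta^2(V-b)$ with the same logarithmic cutoff $\zeta$ as in Lemma~\ref{lem. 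Liouville-type result with cutoff function}: this yields $\int_{B_R^+}y^{1-2s}(|\nabla U|^2+|\nabla V|^2)\,dxdy\le C R^{n}$ for a half-Laplacian-type scaling, and more importantly, combining the energy monotonicity/density estimates with the $\Gamma$-convergence framework of Savin--Valdinoci \cite{SV12} (minimal-perimeter regime for $s\ge\frac12$, fractional-perimeter regime for $s<\frac12$), one obtains that in the admissible dimensions $n\le2$ (resp. $n\le1$) the energy grows at most like $R^{n}\le R^2$, with the borderline $n=2$, $s=\frac12$ producing exactly the extra $\ln R$ factor. Since $\xi,\eta$ are derivatives of $U,V$, an interpolation/Caccioppoli argument then upgrades the gradient bound to the stated bound on $\int_{B_r^+}y^{1-2s}(\xi^2+\eta^2)$.

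With \eqref{eq. quadratic energy growth condition} in hand, Lemma~\ref{lem. Liouville-type result with cutoff function} gives that $\sigma=\xi/\varphi$ and $\tau=\eta/\psi$ are identically equal to the same constant $c_{\vec e}$. Thus $\partial_{\vec e}U=c_{\vec e}\,\varphi$ and $\partial_{\vec e}V=c_{\vec e}\,\psi$ for every tangential direction $\vec e$. Running this for a basis $\vec e_1,\dots,\vec e_n$ of $\mathbb{R}^n$ shows that the gradient vectors $\nabla_x U$ and $\nabla_x V$ are everywhere parallel to the fixed vector $\vec c:=(c_{\vec e_1},\dots,c_{\vec e_n})$ up to the common scalar field $\varphi$ (resp. $\psi$): concretely $\nabla_x U=\varphi\,\vec c$ and $\nabla_x V=\psi\,\vec c$. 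If $\vec c=0$ then $U$ and $V$ are independent of $x$, hence $(u,v)$ is constant, giving the first alternative. If $\vec c\ne0$, set $\vec\nu:=\vec c/|\vec c|$; then $U,V$ depend on $x$ only through $x\cdot\vec\nu$ (level sets of $U$ are hyperplanes orthogonal to $\vec\nu$, and the same for $V$ since their gradients are parallel), so $(u(x),v(x))=(U(x\cdot\vec\nu,0),V(x\cdot\vec\nu,0))$ is one-dimensional. The sign condition $\partial_{\vec\nu}u>0>\partial_{\vec\nu}v$ is inherited from $\partial_{x_3}u>0>\partial_{x_3}v$ together with $\partial_{\vec\nu}U=|\vec c|\varphi$ and $\varphi>0>\psi$ (one checks $|\vec c|>0$ forces the correct sign, since otherwise taking $\vec e=\vec e_3$ would contradict $\varphi>0>\psi$). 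The main obstacle I anticipate is the energy growth estimate in the genuinely non-trivial dimension $n=2$ (for $s\ge\frac12$) and $n=2$, $s<\frac12$: this is where one cannot argue by volume alone and must either invoke a stability inequality for the linearized system or import the sharp energy bounds from the fractional Allen--Cahn/De Giorgi literature \cite{SV12,CabreCinti2014CVPDE,CS14} adapted to the coupled potential $W$, keeping careful track of the logarithm that saturates the hypothesis of Lemma~\ref{lem. Liouville-type result with cutoff function}.
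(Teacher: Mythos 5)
Your overall skeleton is the same as the paper's: apply Lemma~\ref{lem. derivative satisfies the Schrodinger equation} and Lemma~\ref{lem. quadratic form Q is negative semi-definite} to $(\xi_i,\eta_i)=(\partial_{x_i}U,\partial_{x_i}V)$ with the hypothesized pair $(\varphi,\psi)$, invoke Lemma~\ref{lem. Liouville-type result with cutoff function} to get $\sigma_i\equiv\tau_i\equiv c_i$, and then read off constancy or one-dimensionality from $\nabla u=\varphi\,\vec c$, $\nabla v=\psi\,\vec c$. Where you diverge is the verification of the growth condition \eqref{eq. quadratic energy growth condition}, and there your write-up is partly off-track. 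The paper's check is elementary and pointwise: boundedness of $(U,V)$ gives $\|\nabla U\|_{L^\infty}+\|\nabla V\|_{L^\infty}\le C$, and interior Schauder estimates in balls $B_{y/2}(x,y)$ (where the weight is uniformly elliptic) give $|\nabla U(x,y)|+|\nabla V(x,y)|\le C/y$; combining the two, the $y$-integral of $y^{1-2s}\min\{1,y^{-2}\}$ converges, so $\int_{B_r^+}y^{1-2s}(\xi_i^2+\eta_i^2)\,dxdy\le Cr^{n}\le Cr^{2}\ln r$ for $n\le 2$, with no need to exploit the nonlinearity at all. Your Caccioppoli computation (testing \eqref{eq. simplified extension equation} against $\zeta^2(U-a)$, $\zeta^2(V-b)$) would in fact also do the job for every $s\in(0,1)$: the Neumann data $W_u,W_v$ are bounded, so the boundary term is $O(R^n)$ and the cutoff term is $O(R^{n-2s})$, yielding $\int_{B_R^+}y^{1-2s}(|\nabla U|^2+|\nabla V|^2)\le CR^{n}$; and since $\xi_i,\eta_i$ are components of $\nabla U,\nabla V$, no ``interpolation/upgrade'' step is needed afterwards. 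By contrast, the additional appeal to density estimates and the Savin--Valdinoci $\Gamma$-convergence framework is both unnecessary and unjustified here: those results concern minimizers (or monotone solutions), and the corollary assumes neither; likewise your claim that the borderline case $n=2$, $s=\tfrac12$ ``produces exactly the extra $\ln R$ factor'' is not right --- the bound $CR^2$ already sits strictly inside the allowed growth $CR^2\ln R$, the logarithm in \eqref{eq. quadratic energy growth condition} being pure slack. Finally, the sign conclusion $\partial_{\vec\nu}u>0>\partial_{\vec\nu}v$ cannot be ``inherited from $\partial_{x_3}u>0>\partial_{x_3}v$'', which is not a hypothesis of this corollary (indeed $n\le2$); it follows, as you also note, solely from $\partial_{\vec\nu}u=|\vec c|\,\varphi>0$ and $\partial_{\vec\nu}v=|\vec c|\,\psi<0$ using $\varphi>0>\psi$. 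With the $\Gamma$-convergence detour deleted and the Caccioppoli (or the paper's pointwise) estimate stated cleanly, your argument is correct and essentially equivalent to the paper's.
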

\begin{proof}
    Let us denote for $1\leq i\leq n$ that:
    \begin{equation*}
        (\xi_{i},\eta_{i}):=(\frac{\partial u}{\partial x_{i}},\frac{\partial v}{\partial x_{i}}),\quad(\sigma_{i},\tau_{i}):=(\frac{\xi_{i}}{\varphi},\frac{\eta_{i}}{\psi}).
    \end{equation*}
    Accordingly, we also define $Q_{(\varphi,\psi)}\begin{bmatrix}\sigma_{i}\\\tau_{i}\end{bmatrix}$ like in \eqref{eq. ratio quadratic operator}. Then we realize that \eqref{eq. quadratic energy growth condition} is true. In fact, we recall that as $(u,v)$ is bounded, the extension $(U,V)$ is also bounded in $\mathbb{R}^{n+1}_{+}$. By the Schauder estimate for the degenerate or singular equation \eqref{eq. simplified extension equation}, we have
    \begin{equation}\label{eq. gradient pointwise estimate 1 in cor}
        \|\nabla U\|_{L^{\infty}(\mathbb{R}^{n+1}_{+})}+\|\nabla U\|_{L^{\infty}(\mathbb{R}^{n+1}_{+})}\leq C.
    \end{equation}
    Besides, we apply the interior Schauder estimate to \eqref{eq. simplified extension equation} in each ball $B_{y/2}(x,y)$. Using the uniform ellipticity of \eqref{eq. simplified extension equation} inside $B_{y/2}(x,y)$ and the boundedness of $(U,V)$, we have
    \begin{equation}\label{eq. gradient pointwise estimate 2 in cor}
        |\nabla U(x,y)|+|\nabla V(x,y)|\leq\frac{C}{y}.
    \end{equation}
    By integrating the estimates \eqref{eq. gradient pointwise estimate 1 in cor} and \eqref{eq. gradient pointwise estimate 2 in cor}, we have verified \eqref{eq. quadratic energy growth condition}.

    By Lemma~\ref{lem. Liouville-type result with cutoff function}, there exist two constants $c_{1}$ and $c_{2}$ such that
    \begin{equation*}
        \sigma_{i}\equiv\tau_{i}\equiv c_{i},\quad i=1,2.
    \end{equation*}
    By the construction of $(\sigma_{i},\tau_{i})$, we have
    \begin{equation}\label{eq. gradient of u and v in 2 dim}
        \nabla u(x)=\varphi(x)(c_{1}\vec{e}_{1}+c_{2}\vec{e}_{2}),\quad\nabla v(x)=\psi(x)(c_{1}\vec{e}_{1}+c_{2}\vec{e}_{2}).
    \end{equation}
    \begin{itemize}
        \item Case 1: If $c_{1}=c_{2}=0$, then $u(x)$ and $v(x)$ are both constants.
        \item Case 2: Otherwise, we let
        \begin{equation*}
            \vec{\nu}=\frac{c_{1}\vec{e}_{1}+c_{2}\vec{e}_{2}}{\sqrt{c_{1}^{2}+c_{2}^{2}}},\quad\vec{\nu}^{\perp}=\frac{-c_{2}\vec{e}_{1}+c_{1}\vec{e}_{2}}{\sqrt{c_{1}^{2}+c_{2}^{2}}}.
        \end{equation*}
        Then we see from \eqref{eq. gradient of u and v in 2 dim} that
        \begin{equation*}
            \vec{\nu}^{\perp}\cdot\nabla u(x)=\vec{\nu}^{\perp}\cdot\nabla v(x)=0.
        \end{equation*}
        Therefore, we have $\Big(u(x),v(x)\Big)=\Big(u(x\cdot\vec{\nu}),v(x\cdot\vec{\nu})\Big)$. Moreover, from \eqref{eq. gradient of u and v in 2 dim}, $u$ is increasing and $v$ is decreasing along $x\cdot\vec{\nu}$.
    \end{itemize}
    This completes the proof of Corollary~\ref{cor. n=2 is more obvious}.
\end{proof}

\section{Evolution of the Ginzburg-Landau energy}

We next study the growth of the Ginzburg-Landau energy in large domains. The following lemma provides an upper bound for the energy in half-balls $B_R^+$, depending on the fractional exponent $s$.

\begin{lemma}\label{lem. energy growth}
    For $R$ sufficiently large, we have
    \begin{equation*}
        J(u,v,B_{R}^{+})\leq
        \begin{cases}
            C R^{n-2s}, & \text{if } 0<s<\frac{1}{2},\\
            C R^{n-1}\ln{R}, & \text{if } s=\frac{1}{2},\\
            C R^{n-1}, & \text{if } \frac{1}{2}<s<1.
        \end{cases}
    \end{equation*}
\end{lemma}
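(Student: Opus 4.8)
The plan is to construct an explicit competitor for the energy functional $J$ on the half-ball $B_R^+$ and use the minimality (or near-minimality) of the solution's extension among functions with the same boundary data, obtaining the stated bounds by a careful layering/interpolation argument. The key structural input is that, after the change of variables adapted to the two equilibria $(a,b)$ and $(b,a)$, the solution $(u,v)$ connects these two constant states, so the "interesting" energy is concentrated near a hypersurface; outside a neighborhood of this interface the integrand $W(u,v)$ should be exponentially small, and the gradient contribution should decay appropriately.

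First, I would recall from Lemma~\ref{lem. range} that any positive bounded solution satisfies $u^2+v^2\le 1$ and $uv\ge\omega/\alpha$ pointwise, so $(u,v)$ ranges over a compact arc of the curve where $W$ is minimized together with its interior; in particular $W(u,v)$ is comparable to the squared distance of $(u(x),v(x))$ to the set $\{(a,b),(b,a)\}$ of minimizers of $W$ restricted to that arc. Next, using the monotonicity $\partial_{x_3}u>0>\partial_{x_3}v$ and the asymptotic behavior, I would show that for each $(x',y)\in\mathbb{R}^{n-1}\times\mathbb{R}_+$ the slice $t\mapsto (U,V)(x',t,y)$ is a one-dimensional-type transition, and build a comparison function that equals $(u,v)$ inside a ball $B_{R/2}^+$, equals the constant $(a,b)$ (resp. $(b,a)$) far from the interface, and interpolates in a shell of width $O(1)$ (or $O(R)$ when $s=1/2$, reflecting the logarithmic borderline). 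The weighted Dirichlet energy of such a competitor in the extension variable is estimated by splitting $B_R^+$ into the region $\{y\le 1\}$, where one uses the one-dimensional heteroclinic profile's decay, and $\{y\ge1\}$, where the Poisson extension of a bounded function connecting two constants across a flat hyperplane gives $\int_{\{y\ge1\}}y^{1-2s}|\nabla U|^2\,dx\,dy\sim \int_1^\infty y^{1-2s}\cdot y^{-?}\,dy$ over the relevant $(n-1)$-dimensional cross-section, which integrates to $R^{n-2s}$ for $s<1/2$, to $R^{n-1}\ln R$ for $s=1/2$, and converges (giving $R^{n-1}$) for $s>1/2$. The potential term contributes only $O(R^{n-1})$ since $W$ is supported, up to exponentially small tails, in a unit neighborhood of the interface hyperplane.

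The main obstacle I expect is controlling the far-field decay of the extension $(U,V)$ and of the potential term $W(u,v)$ along the $x_3$-direction \emph{before} one knows the solution is one-dimensional: the comparison argument needs quantitative (at least integrable) decay of $U-a$, $U-b$, $W(u,v)$ away from a bounded set in the $x_3$ variable, uniformly in the remaining variables. To get this I would combine the monotonicity in $x_3$ with the strict convexity of $W$ near its minimizers (which gives a spectral gap for the linearized operator away from the interface, hence exponential decay via a barrier/sliding argument using the Schr\"odinger operator $L$ from Section~3 and the positivity $W_{uv}>0$ of \eqref{eq. W_uv>0}), together with interior Schauder estimates \eqref{eq. gradient pointwise estimate 2 in cor}–type bounds for the degenerate equation to transfer the decay from the trace to the extension. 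A secondary technical point is that the competitor must genuinely have the same boundary trace issues handled correctly for the nonlocal energy — here the trace inequality Lemma~\ref{lem. trace} guarantees that modifying the extension away from $\{y=0\}$ only increases energy, so one may freely use \emph{any} extension of a well-chosen trace competitor, which is what makes the shell construction legitimate. Finally, the borderline case $s=1/2$ requires tracking the logarithm honestly: the shell width should be taken $\sim R$ rather than $\sim 1$ in the extension variable to balance $\int_1^R y^{1-2s}\,dy/R^2\sim R^{-1}\ln R$ against the cross-sectional volume $R^{n-1}$.
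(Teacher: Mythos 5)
Your proposal takes a genuinely different route from the paper, but it has a gap at its core. The entire strategy rests on comparing $J(u,v,B_R^+)$ with the energy of an explicit competitor and invoking ``the minimality (or near-minimality) of the solution's extension among functions with the same boundary data.'' No such minimality is available: $(u,v)$ is only assumed to be a bounded positive solution with the monotonicity $\partial_{x_3}u>0>\partial_{x_3}v$, i.e.\ a critical point of $J$, not a minimizer. The minimality of the Caffarelli--Silvestre extension (which is what Lemma~\ref{lem. trace} exploits) concerns only the weighted Dirichlet part for a \emph{fixed} trace; it gives no comparison between $J(u,v,\cdot)$ and the energy of a competitor whose trace differs from $(u,v)$ on $\{y=0\}$. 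In the local Allen--Cahn setting one can upgrade monotone solutions with limits to minimizers in a restricted class via a foliation/calibration argument with the translates, but for this coupled system with a nonlocal operator that step is a substantive missing ingredient which you neither prove nor cite, so the inequality $J(u,v,B_R^+)\le J(\text{competitor},B_R^+)$ that drives all your estimates is unjustified.

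A second, related gap is that your competitor construction presupposes that $(u,v)$ converges, uniformly and with quantitative (even exponential) decay in $x_3$, to the constant states $(a,b)$ and $(b,a)$, so that the energy ``concentrates near a hypersurface.'' Before one-dimensionality is established, this is not known: the paper's own classification (Lemma~\ref{lem. classification of two limiting profiles}, Cases 3 and 4) shows the limit profiles $(\overline{u},\overline{v})$, $(\underline{u},\underline{v})$ may be non-constant one-dimensional transitions in a direction orthogonal to $x_n$, so there is no reason the potential term is confined, up to exponential tails, to a unit neighborhood of a single interface hyperplane. The paper avoids both issues entirely: it computes $\frac{dE}{dt}$ for the translated cubes $Q_t$ by integration by parts, reducing it to lateral boundary terms which, thanks to the sign condition $\partial_{x_n}u>0>\partial_{x_n}v$, integrate in $t$ to at most $C(b-a)$ times the appropriate cross-sectional factor (this is where the exponents $R^{n-2s}$, $R^{n-1}\ln R$, $R^{n-1}$ arise, via the decay $|\nabla U|+|\nabla V|\lesssim \min\{1,y^{-1}\}$ weighted by $y^{1-2s}$), and then bounds the energy of the limit profile by classification plus a one-dimensional version of the same sliding estimate. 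That argument needs no minimality and no a priori decay in $x_3$, which is precisely what your plan is missing.
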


The strategy to prove Lemma~\ref{lem. energy growth} is to push the domain $B_R$ along the $\vec{e}_n$ direction towards infinity. More precisely, we aim to establish the following two statements:

\begin{itemize}
    \item[(1)] The energy variation during the pushing procedure remains controlled and does not exceed $O(R^{n-2s})$, $O(R^{n-1}\ln{R})$, or $O(R^{n-1})$.
    \item[(2)] The limiting profile has energy similarly bounded by $O(R^{n-2s})$, $O(R^{n-1}\ln{R})$, or $O(R^{n-1})$.
\end{itemize}

In this subsection, we focus on proving the first statement, which quantifies the energy change along the pushing procedure (see Lemma~\ref{lem. evolution of E(t)} below). The second statement will be addressed in the subsequent subsection.

\begin{lemma}\label{lem. evolution of E(t)}
    Let $R$ be sufficiently large. For every $t\in\mathbb{R}$, define the cube
    \begin{equation*}
        Q_{t}=[-R,R]^{n-1}\times[t-R,t+R]\times[0,R],
    \end{equation*}
    and let $E(t)=J(u,v,Q_{t})$. If 
    $$
    \frac{\partial u}{\partial x_n}>0>\frac{\partial v}{\partial x_n} \quad \mbox{in}\quad Q_t,
    $$
    then the following estimate holds:
    \begin{equation}\label{eq4.1}
        \int_{-\infty}^{\infty}\Big|\frac{dE}{dt}\Big|\,dt\leq
        \begin{cases}
            C R^{n-2s}, & \text{if } 0<s<\frac{1}{2},\\
            C R^{n-1}\ln{R}, & \text{if } s=\frac{1}{2},\\
            C R^{n-1}, & \text{if } \frac{1}{2}<s<1.
        \end{cases}
    \end{equation}
\end{lemma}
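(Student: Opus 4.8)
The plan is to differentiate $E(t)$ in $t$ and express $\frac{dE}{dt}$ as a boundary flux through the two sliding faces of the cube $Q_t$ plus the contribution of the lateral faces. Since translating $Q_t$ by $dt$ in the $x_n$-direction only changes the domain by adding the face $\{x_n = t+R\}$ and removing the face $\{x_n = t-R\}$ (the lateral and bottom faces are unchanged as sets, only the integrand moves), the Reynolds transport / domain-variation formula gives
\[
\frac{dE}{dt} = \int_{F_+(t)} \mathcal{E}(u,v)\,d\mathcal{H}^{n} - \int_{F_-(t)} \mathcal{E}(u,v)\,d\mathcal{H}^{n},
\]
where $F_\pm(t) = [-R,R]^{n-1}\times\{t\pm R\}\times[0,R]$ and $\mathcal{E}(u,v) = y^{1-2s}\frac{|\nabla u|^2+|\nabla v|^2}{2} + W(u,v)\,\delta_{\{y=0\}}$ is the energy density (the potential term living on the trace). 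Integrating $|\frac{dE}{dt}|$ over $t\in\mathbb{R}$, each horizontal slab $\{x_n = \text{const}\}\times[0,R]$ with $x'\in[-R,R]^{n-1}$ gets counted at most twice (once as an $F_+$ face, once as an $F_-$ face), so
\[
\int_{-\infty}^{\infty}\Big|\frac{dE}{dt}\Big|\,dt \le 2\int_{[-R,R]^{n-1}}\int_0^R\int_{-\infty}^{\infty}\Big(y^{1-2s}\frac{|\nabla u|^2+|\nabla v|^2}{2}\Big)\,dx_n\,dy\,dx' + 2\int_{[-R,R]^{n-1}}\int_{-\infty}^{\infty} W(u,v)\,dx_n\,dx'.
\]
So everything reduces to two one-dimensional integrals in $x_n$: the gradient energy on a vertical strip and the potential energy on the trace, both integrated over all $x_n\in\mathbb{R}$.

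The key point is that the monotonicity hypothesis $\partial_{x_n}u>0>\partial_{x_n}v$ forces these $x_n$-integrals to be finite with a \emph{dimension-independent} bound. For the potential term, I would argue as follows: fix $x'\in[-R,R]^{n-1}$; along the line $x_n\mapsto u(x',x_n)$ the function is monotone increasing and bounded (by Lemma~\ref{leuaddv}, between $q^{-1}$ and $q$), and similarly $v$ is monotone decreasing and bounded. Lemma~\ref{lem. range} confines $(u,v)$ to the compact region $\{u^2+v^2\le 1,\ uv\ge\omega/\alpha\}$, which is exactly the closed arc joining $(a,b)$ to $(b,a)$ together with the region "above" it; the two zeros of $W$ on the relevant set are precisely $(a,b)$ and $(b,a)$. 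Near each zero $W$ vanishes quadratically, and a standard ODE-type comparison (using the extension equation restricted to the line, or a De Giorgi–type sliding argument) gives exponential-in-$|x_n|$ convergence of $(u(x',x_n),v(x',x_n))$ to $(a,b)$ as $x_n\to-\infty$ and to $(b,a)$ as $x_n\to+\infty$, hence $\int_{\mathbb{R}} W(u(x',x_n),v(x',x_n))\,dx_n \le C$ uniformly in $x'$. Multiplying by the measure $|[-R,R]^{n-1}| = (2R)^{n-1}$ of the transverse cube gives the $O(R^{n-1})$ contribution, which is absorbed in all three cases of \eqref{eq4.1}.

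For the gradient term, the delicate part is the weight $y^{1-2s}$ near $y=0$ and near $y=\infty$, which is exactly where the three cases of the lemma come from. I would split the vertical strip $x_n\in\mathbb{R}$, $y\in[0,R]$ into the near-boundary region $0<y<1$ and the bulk $1\le y\le R$. In the bulk, interior Schauder estimates for the uniformly elliptic (away from $\{y=0\}$) equation \eqref{eq. simplified extension equation}, together with the global bound $\|\nabla U\|_{L^\infty}+\|\nabla V\|_{L^\infty}\le C$ from \eqref{eq. gradient pointwise estimate 1 in cor}, give $y^{1-2s}(|\nabla U|^2+|\nabla V|^2)\le C y^{1-2s}$, and $\int_1^R y^{1-2s}\,dy$ is $O(1)$ if $s>\frac12$, $O(\ln R)$ if $s=\frac12$, and $O(R^{2-2s})$ if $s<\frac12$ — but we also need the $x_n$-integral to converge, for which I use a Caccioppoli/energy inequality on the box $[-R,R]^{n-1}\times\mathbb{R}\times[0,1]$ tested against a cutoff, bounding the gradient energy by the potential energy plus oscillation of $(U,V)$, which is finite by the exponential decay established above. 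Putting the two regions together yields $O(R^{n-1})$, $O(R^{n-1}\ln R)$, or $O(R^{n-2s})$ respectively (after multiplying the $x_n$-and-$y$ integral, which is $O(R^0)$, $O(\ln R)$, or $O(R^{2-2s})$, by $(2R)^{n-1}$). The main obstacle — and where I expect to spend the most care — is making the "exponential convergence of $(u,v)$ to the constant states along monotone lines" rigorous in the nonlocal setting: one cannot simply invoke phase-plane ODE analysis, so I would instead run a sliding argument comparing $(u,v)$ with translates of a suitable one-dimensional sub/supersolution, or equivalently establish a uniform modulus of continuity and use the weak Harnack inequality (Lemma-style, as in the proof of Lemma~\ref{lem. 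Ricatti}) to upgrade pointwise smallness of $W$ to exponential decay of $\nabla(u,v)$; this decay is what makes every $x_n$-integral above converge uniformly in the transverse variable $x'$.
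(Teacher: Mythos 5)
There is a genuine gap, and it lies exactly where you flagged your main concern. Your Reynolds-transport formula for $\frac{dE}{dt}$ is correct, but the ensuing bound
$\int_{-\infty}^{\infty}\bigl|\frac{dE}{dt}\bigr|\,dt \le 2\times(\text{energy of the infinite slab }[-R,R]^{n-1}\times\mathbb{R}\times[0,R])$
is only useful if the energy density is integrable in $x_n$, uniformly in $x'$, and that is precisely what is \emph{not} available at this stage of the argument — and is in general false. At the point where Lemma~\ref{lem. evolution of E(t)} is used, nothing forces $(u(x',x_n),v(x',x_n))$ to converge to the zeros $(a,b)$ and $(b,a)$ of $W$ as $x_n\to\mp\infty$: by Lemma~\ref{lem. classification of two limiting profiles}, the limiting profiles $(\overline{u},\overline{v})$, $(\underline{u},\underline{v})$ may be the third constant $\bigl(\sqrt{\tfrac{1+\omega}{2+\alpha}},\sqrt{\tfrac{1+\omega}{2+\alpha}}\bigr)$, where $W>0$, or a genuinely nonconstant one-dimensional profile, in which case $\int_{\mathbb{R}}W(u(x',x_n),v(x',x_n))\,dx_n=+\infty$ for many $x'$ and the tangential gradient terms do not decay in $x_n$ either. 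Assuming convergence to $(a,b)$ and $(b,a)$ along every vertical line is essentially assuming the conclusion of the main theorem. Moreover, even in the favorable case, exponential decay is the wrong expectation for a nonlocal problem: fractional layers approach their limits polynomially (of order $|x_n|^{-2s}$), so $W\sim|x_n|^{-4s}$ need not even be integrable in $x_n$ when $s\le\frac14$. There is also a smaller computational slip: with only the bound $|\nabla U|+|\nabla V|\le C$ you get the bulk factor $\int_1^R y^{1-2s}\,dy\sim R^{2-2s}$, which is \emph{not} $O(1)$ for $s>\frac12$; one needs the sharper decay $y^{1-2s}(|\nabla U|+|\nabla V|)\le Cy^{-2s}$ for $y\ge1$ (as the paper quotes from \cite{CC2014CVPDE}), and even then multiplying your slab estimate by $(2R)^{n-1}$ does not reproduce the exponent $R^{n-2s}$ you state for $s<\frac12$.

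The paper's proof avoids all of this by exploiting the equation before estimating: writing $E(t)=J(u^{(t)},v^{(t)},Q_0)$ and differentiating the \emph{solution} rather than the domain, then integrating by parts so that the interior terms and the bottom-trace terms cancel against the Euler--Lagrange system \eqref{eq. simplified extension equation}. What survives is a flux over the lateral boundary that is \emph{linear} in $(\partial_{x_n}u,\partial_{x_n}v)$, not quadratic in the full gradient and with no potential term. The monotonicity hypothesis then lets one integrate in $t$ by a telescoping (total-variation) argument: $\int_{\mathbb{R}}\bigl(\partial_{x_n}u-\partial_{x_n}v\bigr)\,dx_n\le 2(b-a)$ pointwise, with no information needed about the limiting profiles or any decay rate; the three regimes in \eqref{eq4.1} then come solely from $\int_0^R h(y)\,dy$ with $h(y)\lesssim\min\{y^{1-2s},y^{-2s}\}$. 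If you want to salvage your approach, you would have to replace the quadratic slab-energy bound by this linear-in-$\partial_{x_n}$ structure, which is exactly the integration-by-parts step you skipped.
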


\begin{proof}
Observe that translating the cube  $Q_t$ 
 is equivalent to shifting the solution along the $\vec{e}_n$-direction.
 Define

    \[
        (u^{(t)}(x',x_n,y), v^{(t)}(x',x_n,y)) := (u(x', x_n+t,y), v(x', x_n+t,y)).
    \]
    Then $J(u,v,Q_t) = J(u^{(t)},v^{(t)},Q_0)$ and
    \[
        \frac{d}{dt}(u^{(t)},v^{(t)}) = \left( \frac{\partial u}{\partial x_n}(x', x_n+t,y), \frac{\partial v}{\partial x_n}(x', x_n+t,y) \right).
    \]

Using integration by parts and \eqref{eq. simplified extension equation}, we obtain
\begin{eqnarray*}
\begin{aligned}
     \frac{d E}{dt}=&\frac{d}{dt}\int_{Q_t}y^{1-2s}\frac{|\nabla u|^{2}+|\nabla v|^{2}}{2}dxdy+\int_{Q_t\cap\mathbb{R}^{n}}W(u,v)dx\\
     =& \int_{Q_t}y^{1-2s} \Big\{ \nabla u \cdot \nabla \frac{\partial u}{\partial x_n} + \nabla v \cdot \nabla \frac{\partial v}{\partial x_n} \Big\}dxdy
     +\int_{Q_t\cap\mathbb{R}^{n}} \Big\{ W_u \frac{\partial u}{\partial x_n} + W_v \frac{\partial v}{\partial x_n} \Big\} dx\\
     =& -\int_{Q_t} \Big\{\operatorname{div} (y^{1-2s} \nabla u) \frac{\partial u}{\partial x_n}  +\operatorname{div} (y^{1-2s} \nabla v) \frac{\partial v}{\partial x_n} \Big\} dxdy\\
     &+\int_{\partial Q_t}\Big\{y^{1-2s} \frac{\partial u}{\partial x_n}\cdot \frac{\partial u}{\partial \vec{\nu}}+ y^{1-2s} \frac{\partial v}{\partial x_n}\cdot \frac{\partial v}{\partial \vec{\nu}}\Big\} d\mathcal{H}^{n}_{\partial Q_t}
     +\int_{Q_t\cap\mathbb{R}^{n}} \Big\{ W_u \frac{\partial u}{\partial x_n} + W_v \frac{\partial v}{\partial x_n} \Big\} dx\\
     =&- \int_{Q_t\cap \mathbb{R}^n}\lim_{y\to 0^+}\Big\{y^{1-2s}u_y\frac{\partial u}{\partial x_n}+y^{1-2s}v_y\frac{\partial u}{\partial x_n}
     \Big\} dx
     +\int_{Q_t\cap\mathbb{R}^{n}} \Big\{ W_u \frac{\partial u}{\partial x_n}  + W_v \frac{\partial v}{\partial x_n} \Big\} dx\\
     &+\int_{\partial Q_t\backslash (Q_t\cap \mathbb{R}^n)}\Big\{y^{1-2s} \frac{\partial u}{\partial x_n}\cdot \frac{\partial u}{\partial \vec{\nu}}+ y^{1-2s} \frac{\partial v}{\partial x_n}\cdot \frac{\partial v}{\partial \vec{\nu}}\Big\} d\mathcal{H}^{n}_{\partial Q_t}\\
     =&\int_{\partial Q_t\backslash (Q_t\cap \mathbb{R}^n)}\Big\{y^{1-2s} \frac{\partial u}{\partial x_n}\cdot \frac{\partial u}{\partial \vec{\nu}}+ y^{1-2s} \frac{\partial v}{\partial x_n}\cdot \frac{\partial v}{\partial \vec{\nu}}\Big\} d\mathcal{H}^{n}_{\partial Q_t},
     \end{aligned}
\end{eqnarray*}
where $\vec{\nu}$ denotes the outward unit normal on $\partial Q_t$.

 To estimate the integral on the right-hand side, we consider an arbitrary point $(x,y) = (x', x_n, y) \in \mathbb{R}^n\times(0, +\infty)$ and  decompose  the lateral boundary  $\partial Q_t\backslash (Q_t\cap \mathbb{R}^n)$  into the following  three parts:
 $$
S_1=\{(x, y) \in R^n \times(0, +\infty) \mid x\in [-R, R]^{n-1} \times [t-R, t+R], \,y=R \},
 $$
$$
S_2=\{(x, y) \in R^n \times(0, +\infty) \mid x'\in [-R, R]^{n-1}, \, x_n=t \pm R, \,y\in [0,R] \},
$$
and 
$$
S_3=\{(x, y) \in R^n \times(0, +\infty) \mid x'\in \partial [-R, R]^{n-1},\, x_n\in [t-R, t+R], \, x_n=t \pm R, \,y\in [0,R] \}.
$$
Then 
$$
\begin{aligned}
\left|\frac{d E}{dt}\right| \leq &\int_{\partial Q_t\backslash (Q_t\cap \mathbb{R}^n)}\left| y^{1-2s} \frac{\partial u}{\partial x_n}\cdot \frac{\partial u}{\partial \vec{\nu}}+ y^{1-2s} \frac{\partial v}{\partial x_n}\cdot \frac{\partial v}{\partial \vec{\nu}}\right| d\mathcal{H}^{n}_{\partial Q_t}\\
=& \int_{S_1}\left| y^{1-2s}\frac{\partial u}{\partial x_n}\cdot \frac{\partial u}{\partial \vec{\nu}}+ y^{1-2s}\frac{\partial v}{\partial x_n}\cdot \frac{\partial v}{\partial \vec{\nu}}\right| d\mathcal{H}^{n}_{\partial Q_t}\\
&+\int_{S_2}\left| y^{1-2s} \frac{\partial u}{\partial x_n}\cdot \frac{\partial u}{\partial \vec{\nu}}+ y^{1-2s} \frac{\partial v}{\partial x_n}\cdot \frac{\partial v}{\partial \vec{\nu}}\right| d\mathcal{H}^{n}_{\partial Q_t}\\
&+\int_{S_3}\left| y^{1-2s} \frac{\partial u}{\partial x_n}\cdot \frac{\partial u}{\partial \vec{\nu}}+ y^{1-2s} \frac{\partial v}{\partial x_n}\cdot \frac{\partial v}{\partial \vec{\nu}}\right| d\mathcal{H}^{n}_{\partial Q_t}\\
:=&I_1+I_2+I_3.
\end{aligned}
$$

\medskip
\noindent
\textbf{Estimate of $I_1$.} 
By Lemma \ref{lem. range}, we have that $u$ and $v$ are bounded.  Consequently, the Schauder estimates imply that
$$
\left|\frac{\partial u}{\partial y}(x, R)\right|\sim \frac{1}{R} \,\, \mbox{and}\,\, \left|\frac{\partial v}{\partial y}(x, R)\right|\sim \frac{1}{R}.
$$
Then
\begin{eqnarray*}
\begin{aligned}
I_1(t)=& R^{1-2s} \int_{t-R}^{t+R}\int_{[-R, R]^{n-1}}\left| \frac{\partial u}{\partial x_n}(x, R)\cdot \frac{\partial u}{\partial y}(x, R) + \frac{\partial v}{\partial x_n}(x, R) \cdot\frac{\partial v}{\partial y}(x,R)\right| \,dx'dx_n\\
\leq & C R^{-2s}\int_{t-R}^{t+R}\int_{[-R, R]^{n-1}} \Big\{ \frac{\partial u}{\partial x_n}(x, R)- \frac{\partial v}{\partial x_n}(x, R)\Big\} \,dx' dx_n.\\
\end{aligned}
\end{eqnarray*}
where we have used the assumption that $\frac{\partial u}{\partial x_n}>0>\frac{\partial v}{\partial x_n}$.

Since $u$ and $v$ are both monotone in the $x_n$-direction, and  bounded between $a$ and $b$, we can  integrate $I_1(t)$ with respect to $t \in (-\infty, +\infty)$ to obtain 
\begin{eqnarray}\label{eqI1}
\begin{aligned}
&\int_{-\infty}^{+\infty} I_1(t)\, dt \\
=& R^{-2s} \int_{-\infty}^{+\infty}\int_{t-R}^{t+R}\int_{[-R, R]^{n-1}}\left| \frac{\partial u}{\partial x_n}(x, R)\cdot \frac{\partial u}{\partial y}(x, R) + \frac{\partial v}{\partial x_n}(x, R) \cdot\frac{\partial v}{\partial y}(x,R)\right|\, dx'dx_ndt\\
\leq & C R^{-2s}\int_{-\infty}^{+\infty}\int_{t-R}^{t+R}\int_{[-R, R]^{n-1}} \Big\{ \frac{\partial u}{\partial x_n}(x, R)- \frac{\partial v}{\partial x_n}(x, R)\Big\} \,dx' dx_ndt\\
=& C R^{-2s}\int_{-\infty}^{+\infty}\int_{x_n-R}^{x_n+R}\int_{[-R, R]^{n-1}} \Big\{ \frac{\partial u}{\partial x_n}(x, R)- \frac{\partial v}{\partial x_n}(x, R)\Big\} \,dx'dt dx_n\\
= & 2C R^{1-2s} \int_{[-R, R]^{n-1}} \int_{-\infty}^{+\infty} \Big\{ \frac{\partial u}{\partial x_n}(x, R)- \frac{\partial v}{\partial x_n}(x, R)\Big\} \,dx_n dx'\\
\leq & C(b-a)R^{1-2s} \int_{[-R, R]^{n-1}}dx'\\
\leq & CR^{n-2s}
\end{aligned}
\end{eqnarray}
for any $0<s<1.$

\medskip
\noindent
\textbf{Estimate of $I_2$.} Define
$$
h(y)=\sup_{x\in \mathbb{R}^n} \Big(y^{1-2s} (|\nabla u(x, y)|+|\nabla v(x, y)|) \Big),\quad y>0. 
$$
By \cite[Remark 1.10]{CC2014CVPDE}, 
$$
h(y)\leq 
\begin{cases}
    Cy^{1-2s}, & y\in(0, 1),\\
    Cy^{-2s}, & y\in[1, \infty).
\end{cases}
$$
Since $\frac{\partial u}{\partial x_n}>0>\frac{\partial v}{\partial x_n}$, and $u$ and $v$ are both bounded between $a$ and $b$,  we have  
\begin{eqnarray*}
\begin{aligned}
I_2(t)\leq & \int_{0}^{R}\int_{[-R, R]^{n-1}} h(y)\Big\{\frac{\partial u}{\partial x_n}(x', t-R, y) - \frac{\partial v}{\partial x_n}(x', t-R, y) \Big\} \,dx'dy\\
&+ \int_{0}^{R}\int_{[-R, R]^{n-1}} h(y)\Big\{\frac{\partial u}{\partial x_n}(x', t+R, y) - \frac{\partial v}{\partial x_n}(x', t+R, y) \Big\} \,dx'dy\\
:=& I_{21}(t)+I_{22}(t). 
 \end{aligned}
\end{eqnarray*}
Integrating $ I_{21}(t)$ over $(-\infty, +\infty)$ with respect to $t$ yields
\begin{eqnarray*}
\begin{aligned}
\int_{-\infty}^{+\infty} I_{21}(t) \,dt
= & \int_{-\infty}^{+\infty} \int_{0}^{R}\int_{[-R, R]^{n-1}} h(y)\Big\{\frac{\partial u}{\partial x_n}(x', t-R, y) -  \frac{\partial v}{\partial x_n}(x', t-R, y) \Big\} \,dx'dy dt\\
=& 2(b-a)  \int_{0}^{1}\int_{[-R, R]^{n-1}} h(y) dx'dy+ 2(b-a)  \int_{1}^{R}\int_{[-R, R]^{n-1}} h(y) dx'dy\\
\leq & CR^{n-1} +CR^{n-1}\int_1^R y^{-2s} dy\\
\lesssim & 
\begin{cases}
R^{n-2s}, &  \mbox{if}\quad 0<s<\frac{1}{2},\\
R^{n-1} \ln R, & \mbox{if}\quad s=\frac{1}{2},\\
R^{n-1}, &  \mbox{if}\quad \frac{1}{2}<s<1.
\end{cases}
 \end{aligned}
\end{eqnarray*}
By an argument similar to that for  $I_{21}(t)$, we  obtain 
\begin{eqnarray*}
\int_{-\infty}^{+\infty} I_{22}(t) \,dt \lesssim 
\begin{cases}
R^{n-2s}, &  \mbox{if}\quad 0<s<\frac{1}{2},\\
R^{n-1} \ln R, & \mbox{if}\quad s=\frac{1}{2},\\
R^{n-1}, &  \mbox{if}\quad \frac{1}{2}<s<1. 
\end{cases}
\end{eqnarray*}
As a consequence, we have 
\begin{eqnarray}\label{eqI2}
\int_{-\infty}^{+\infty} I_{2}(t) \,dt \lesssim 
\begin{cases}
R^{n-2s}, &  \mbox{if}\quad 0<s<\frac{1}{2},\\
R^{n-1} \ln R, & \mbox{if}\quad s=\frac{1}{2},\\
R^{n-1}, &  \mbox{if}\quad \frac{1}{2}<s<1. 
\end{cases}
\end{eqnarray}

\medskip
\noindent
\textbf{Estimate of $I_3$.} 
By the monotonicity condition  $\frac{\partial u}{\partial x_n}>0>\frac{\partial v}{\partial x_n}$, 
and the fact that  $u$ and $v$ are bounded between  $a$ and $b$,
 we have
\begin{eqnarray}\label{eqI3}
\begin{aligned}
&\int_{-\infty}^{+\infty} I_3(t)\, dt \\
\leq & \int_{-\infty}^{+\infty}   \int_{0}^{R}\int_{\partial [-R, R]^{n-1}}\int_{t-R}^{t+R} h(y)\Big\{\frac{\partial u}{\partial x_n}(x, y) - \frac{\partial v}{\partial x_n}(x, y) \Big\} \,dx_n d \mathcal{H}^{n-2}(\partial [-R, R]^{n-1})dydt\\
\leq &\int_{-\infty}^{+\infty}   \int_{0}^{R}\int_{\partial [-R, R]^{n-1}}\int_{x_n-R}^{x_n+R} h(y)\Big\{\frac{\partial u}{\partial x_n}(x, y) - \frac{\partial v}{\partial x_n}(x, y) \Big\} \,dt d \mathcal{H}^{n-2}(\partial [-R, R]^{n-1})dydx_n\\
\leq &2R\int_{-\infty}^{+\infty}   \int_{0}^{R}\int_{\partial [-R, R]^{n-1}} h(y)\Big\{\frac{\partial u}{\partial x_n}(x, y) - \frac{\partial v}{\partial x_n}(x, y) \Big\} \, d \mathcal{H}^{n-2}(\partial [-R, R]^{n-1})dydx_n\\
\leq &4R(b-a) \int_{0}^{R}\int_{\partial [-R, R]^{n-1}} h(y) \, d \mathcal{H}^{n-2}(\partial [-R, R]^{n-1})dydx_n\\
\leq &CR^{n-1}\int_0^R h(y) dy\\
 \lesssim &
\begin{cases}
R^{n-2s}, &  \mbox{if}\quad 0<s<\frac{1}{2},\\
R^{n-1} \ln R, & \mbox{if}\quad s=\frac{1}{2},\\
R^{n-1}, &  \mbox{if}\quad \frac{1}{2}<s<1.
 \end{cases}
 \end{aligned}
\end{eqnarray}
Combining \eqref{eqI1}, \eqref{eqI2}, and \eqref{eqI3}, we obtain \eqref{eq4.1}. This completes the proof of Lemma \ref{lem. evolution of E(t)}.
\end{proof}

\section{The limiting profile}

In this subsection, we study the limiting profile obtained by translating the solution along the $\vec{e}_n$-direction toward infinity. Since $(u,v)$ are monotone in $\vec{e}_n$, the limiting profile is well-defined pointwise, as described below.

\begin{definition}\label{def. limiting profile}
We define $\overline{u}, \overline{v} : \mathbb{R}^n_{+} \to \mathbb{R}$ by
\[
    \overline{u}(x',y) := \lim_{x_n \to +\infty} u(x',x_n,y), \quad 
    \overline{v}(x',y) := \lim_{x_n \to +\infty} v(x',x_n,y).
\]
Similarly, we define $\underline{u}, \underline{v} : \mathbb{R}^n_{+} \to \mathbb{R}$ by
\[
    \underline{u}(x',y) := \lim_{x_n \to -\infty} u(x',x_n,y), \quad 
    \underline{v}(x',y) := \lim_{x_n \to -\infty} v(x',x_n,y).
\]
\end{definition}

\begin{lemma}
The pairs $(\overline{u}, \overline{v})$ and $(\underline{u}, \underline{v})$ satisfy the following properties:
\begin{itemize}
    \item[(1)] $(\overline{u}, \overline{v})$ and $(\underline{u}, \underline{v})$ possess globally bounded derivatives of all orders. In particular, $\overline{u}, \overline{v}, \underline{u},$ and $\underline{v}$ are bounded between $a$ and $b$.
    \item[(2)] Both pairs $(\overline{u}, \overline{v})$ and $(\underline{u}, \underline{v})$ satisfy the system \eqref{eq. simplified extension equation} in $\mathbb{R}^{n-1}$.
\end{itemize}
\end{lemma}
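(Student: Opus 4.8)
The plan is to exhibit $(\overline u,\overline v)$ and $(\underline u,\underline v)$ as $C^\infty_{\mathrm{loc}}$ limits of the $x_n$-translates of the extended pair $(U,V)$, and to transfer regularity, the equation, and the range through a compactness argument; the drop in dimension is then automatic, since the limits turn out to be $x_n$-independent. First I would upgrade the monotonicity to the open half-space: differentiating the Poisson representations $U=P_s\ast_x u$ and $V=P_s\ast_x v$ in the $x_n$-variable and using the hypothesis $\partial_{x_n}u>0>\partial_{x_n}v$ together with the strict positivity of the Poisson kernel gives $\partial_{x_n}U>0>\partial_{x_n}V$ everywhere in $\mathbb{R}^{n+1}_+$. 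Since the extension is an average of its boundary values, and those values lie in $[a,b]$ by Lemma~\ref{lem. range} (which confines $(u,v)$ to the lens $\{u^2+v^2\le1\}\cap\{uv\ge\omega/\alpha\}$, on which $a\le u,v\le b$), we also have $a\le U,V\le b$ on all of $\overline{\mathbb{R}^{n+1}_+}$. Consequently, for each fixed $(x',y)\in\mathbb{R}^{n-1}\times\mathbb{R}_+$ the functions $x_n\mapsto U(x',x_n,y)$ and $x_n\mapsto V(x',x_n,y)$ are monotone and bounded, so the limits defining $\overline u,\overline v,\underline u,\underline v$ exist pointwise, lie in $[a,b]$, and (being limits in the $x_n$ variable) do not depend on $x_n$.

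For the regularity and the equation, set $(u^{(t)},v^{(t)})(x',x_n,y):=(u,v)(x',x_n+t,y)$; each solves \eqref{eq. simplified extension equation} with the same global bound $a\le u^{(t)},v^{(t)}\le b$. Since $W$ is smooth on the compact set $[a,b]^2$, which stays away from the origin, a standard bootstrap (equivalently, the interior Schauder estimates up to $\{y=0\}$ for the degenerate/singular weighted operator $\mathrm{div}(y^{1-2s}\nabla\,\cdot\,)$, whose weight is Muckenhoupt $A_2$, already used for \eqref{eq. gradient pointwise estimate 1 in cor} and \eqref{eq. gradient pointwise estimate 2 in cor}) furnishes, uniformly in $t$ and in the base point, local $C^k$ bounds for the tangential derivatives and the co-normal derivative $y^{1-2s}\partial_y u^{(t)}$, together with the decay $|\nabla^k u^{(t)}(x,y)|\le C_k y^{-k}$ for $y\ge1$. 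By the Arzel\`a--Ascoli theorem and a diagonal extraction, every sequence $t_j\to+\infty$ has a subsequence along which $(u^{(t_j)},v^{(t_j)})$ converges in $C^k_{\mathrm{loc}}(\overline{\mathbb{R}^{n+1}_+})$ for all $k$, including the co-normal derivatives on $\{y=0\}$; passing to the limit in \eqref{eq. simplified extension equation}, the limit is again a bounded solution with globally bounded derivatives of all orders. But that pointwise limit was already identified with $(\overline u,\overline v)$ in the first step, so it is subsequence-independent: the whole family converges to $(\overline u,\overline v)$, which therefore inherits all of these properties, in particular $a\le\overline u,\overline v\le b$. Because $\overline u$ and $\overline v$ are $x_n$-independent, every $x_n$-derivative drops out of \eqref{eq. simplified extension equation}; rewriting $\nabla$ and $\mathrm{div}$ in the variables $(x',y)\in\mathbb{R}^{n-1}\times\mathbb{R}_+$ then yields exactly the $(n-1)$-dimensional form of \eqref{eq. simplified extension equation}. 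The case $t_j\to-\infty$ is identical and gives the statement for $(\underline u,\underline v)$.

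The main obstacle is the second step: securing \emph{uniform} (translation-invariant), up-to-the-boundary higher-order Schauder estimates for the degenerate/singular extension equation, so that the compactness argument transfers the full $C^\infty$ regularity and, crucially, the nonlinear Neumann condition to the limiting profiles. For $s\ge\frac12$ the weight $y^{1-2s}$ is bounded and this is routine; for $s<\frac12$ it is singular, and one must appeal to the regularity theory for $A_2$-weighted degenerate equations (as in the references already cited in the paper), keeping in mind that ``bounded derivatives of all orders'' is to be read for the tangential derivatives and the weighted co-normal derivative, the only normal object that stays bounded at $\{y=0\}$ in this range. Granted this regularity, everything else (existence of the pointwise limits, the dimensional reduction, and the preservation of the range $[a,b]$) is soft.
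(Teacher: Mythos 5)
Your proposal is correct and follows essentially the same route as the paper: pointwise existence of the limits from monotonicity in $x_n$ plus the bound $a\le u,v\le b$ (Lemma~\ref{lem. range}), then compactness of the translates $(u^{(t)},v^{(t)})$ via uniform derivative estimates and Arzel\`a--Ascoli, and passage to the limit in \eqref{eq. simplified extension equation} with the $x_n$-independence giving the reduction to $\mathbb{R}^{n-1}$. Your added care about uniform up-to-the-boundary estimates for the $A_2$-weighted operator and about which derivatives stay bounded at $\{y=0\}$ when $s<\tfrac12$ only makes explicit what the paper leaves implicit.
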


\begin{proof}
Since $u$ and $v$ are monotone in the $\vec{e}_n$-direction and bounded between $a$ and $b$ (by Lemma~\ref{lem. range}), the limits $(\overline{u}, \overline{v})$ and $(\underline{u}, \underline{v})$ are well-defined and globally bounded. Moreover, as all derivatives of $(u,v)$ are uniformly bounded in $\mathbb{R}^n$, their limits inherit these derivative bounds.

Indeed, for each $t \in \mathbb{R}$, consider the translated pair
\begin{equation}\label{eq. translated pair}
    \big(u^{(t)}(x), v^{(t)}(x)\big) := \big(u(x',x_n + t,y),\, v(x',x_n + t,y)\big).
\end{equation}
By virtue of the Arzela-Ascoli theorem, there exists a subsequence of $\{(u^{(t)}, v^{(t)})\}_{t \in \mathbb{R}}$ that converges strongly in every $C^{k,\alpha}(\mathbb{R}^n)$ space to the canonical extensions of $(\overline{u}, \overline{v})$ and $(\underline{u}, \underline{v})$ from $\mathbb{R}^{n-1}$ to $\mathbb{R}^n$.
Since each translated pair $(u^{(t)}, v^{(t)})$ satisfies \eqref{eq. simplified extension equation}, the limiting profiles $(\overline{u}, \overline{v})$ and $(\underline{u}, \underline{v})$ also satisfy the same system.
\end{proof}

Next, we show that the limiting profiles $(\overline{u}, \overline{v})$ and $(\underline{u}, \underline{v})$ are one-dimensional. To this end, it suffices to establish the existence of functions $\varphi > 0 > \psi$ such that 
\[
    L\begin{bmatrix}\varphi \\ \psi\end{bmatrix} \in \overline{\mathbb{R}_{+}} \times \overline{\mathbb{R}_{-}}.
\]

\begin{lemma}\label{lem. classification of one limiting profile}
Assume that one of the following holds:
\begin{itemize}
    \item Case 1: $\frac{1}{2} \le s < 1$ and $n = 3$;
    \item Case 2: $0 < s < \frac{1}{2}$ and $n = 2$.
\end{itemize}
For $(\overline{u}, \overline{v})$ as in Definition~\ref{def. limiting profile}, there exist functions $\varphi > 0 > \psi$ defined in $\mathbb{R}^n_{+}$ such that
\[
    \begin{cases}
        \displaystyle \lim_{y \to 0} \big(y^{1-2s} \varphi_y\big) 
        \le W_{uu}(\overline{u}, \overline{v})\,\varphi + W_{uv}(\overline{u}, \overline{v})\,\psi, \\[0.8em]
        \displaystyle \lim_{y \to 0} \big(y^{1-2s} \psi_y\big) 
        \ge W_{vu}(\overline{u}, \overline{v})\,\varphi + W_{vv}(\overline{u}, \overline{v})\,\psi.
    \end{cases}
\]
Moreover, $(\overline{u}, \overline{v})$ is one-dimensional. Specifically, one of the following holds:
\begin{itemize}
    \item[(i)] $(\overline{u}, \overline{v})$ is constant, equal to one of
    \[
        (\overline{u}, \overline{v}) \equiv (a,b), \quad (b,a), \quad \text{or} \quad \left(\sqrt{\frac{1+\omega}{2+\alpha}}, \sqrt{\frac{1+\omega}{2+\alpha}}\right).
    \]
    \item[(ii)] $(\overline{u}, \overline{v})$ depends only on $(x_{n-1}, y)$ (up to a rotation in $\mathbb{R}^{n-1}$), and
    \[
        \frac{\partial \overline{u}}{\partial x_{n-1}} > 0 > \frac{\partial \overline{v}}{\partial x_{n-1}}.
    \]
\end{itemize}
\end{lemma}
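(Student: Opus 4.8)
The plan is to construct the competitor pair $(\varphi,\psi)$ from the directional derivatives of the original (un-translated) solution $(u,v)$ in the $x_n$-direction, pass to the limit along the pushing procedure, and then verify the hypotheses of Corollary~\ref{cor. n=2 is more obvious}. Concretely, set $\varphi := \partial_{x_n} U > 0$ and $\psi := \partial_{x_n} V < 0$; by Lemma~\ref{lem. derivative satisfies the Schrodinger equation} these solve the Schr\"odinger system exactly, so $L\begin{bmatrix}\varphi\\\psi\end{bmatrix} = \begin{bmatrix}0\\0\end{bmatrix} \in \overline{\mathbb{R}_+}\times\overline{\mathbb{R}_-}$. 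The issue is that after translating $x_n \mapsto x_n + t$ and sending $t\to+\infty$, the limits $\overline{\varphi},\overline{\psi}$ of these derivatives could degenerate (vanish identically), since $\partial_{x_n}u \to 0$ pointwise is not excluded a priori. So the first main step is to rule this out: I would argue that $\overline{\varphi} > 0 > \overline{\psi}$ strictly, or, failing that, work with a renormalized sequence. Because $(u,v)$ is monotone and bounded strictly between $a$ and $b$ (Lemma~\ref{lem. range}, Lemma~\ref{leuaddv}), $\partial_{x_n}u$ cannot decay everywhere; applying the Harnack inequality for the degenerate equation $\mathrm{div}(y^{1-2s}\nabla \partial_{x_n}U)=0$ to the nonnegative function $\partial_{x_n}U$ on balls of fixed size, one propagates strict positivity. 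If the limiting derivative still vanishes, I would instead divide $(\partial_{x_n}U^{(t)}, \partial_{x_n}V^{(t)})$ by its value at a fixed reference point before extracting a convergent subsequence, using interior Schauder estimates for \eqref{eq. simplified extension equation} to get $C^{k,\alpha}_{\rm loc}$ compactness; the Harnack inequality guarantees the normalized limit $(\widetilde\varphi,\widetilde\psi)$ is again strictly signed, i.e.\ $\widetilde\varphi>0>\widetilde\psi$, and it lies in the kernel of $L$ for the limiting coefficients $W_{uu}(\overline u,\overline v)$ etc.

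The second step is to record that $(\varphi,\psi):=(\widetilde\varphi,\widetilde\psi)$ satisfies the claimed differential inequalities: since it lies in the kernel of the Schr\"odinger operator for $(\overline u,\overline v)$, one has $\lim_{y\to0}(y^{1-2s}\varphi_y) = W_{uu}(\overline u,\overline v)\varphi + W_{uv}(\overline u,\overline v)\psi$ and likewise for $\psi$, so the inequalities hold with equality. This verifies precisely the hypothesis $L\begin{bmatrix}\varphi\\\psi\end{bmatrix}\in\overline{\mathbb{R}_+}\times\overline{\mathbb{R}_-}$ required by Corollary~\ref{cor. n=2 is more obvious}, applied in dimension $n-1$: in Case~1 we have $n-1=2$ with $s\in[\tfrac12,1)$, and in Case~2 we have $n-1=1$ with $s\in(0,\tfrac12)$, which are exactly the admissible ranges of that corollary. (I also need $(\overline u,\overline v)$ to solve \eqref{eq. simplified extension equation} in $\mathbb{R}^{n-1}\times\mathbb{R}_+$, which is the content of the preceding lemma in the excerpt.)

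The third step is to extract the conclusion. Corollary~\ref{cor. n=2 is more obvious} gives a dichotomy: either $(\overline u,\overline v)$ is constant, or there is a unit vector $\vec\nu\in\mathbb{R}^{n-1}$ with $(\overline u,\overline v)(x') = (\overline u,\overline v)(x'\cdot\vec\nu)$ and $\partial_{\vec\nu}\overline u>0>\partial_{\vec\nu}\overline v$; after a rotation in $\mathbb{R}^{n-1}$ this becomes dependence on $(x_{n-1},y)$ alone with $\partial_{x_{n-1}}\overline u>0>\partial_{x_{n-1}}\overline v$, which is alternative (ii). In the constant case, a constant solution of \eqref{eq. main} must be one of the three homogeneous equilibria listed in the introduction (the two asymmetric ones $(a,b),(b,a)$ and the symmetric one $(\sqrt{(1+\omega)/(2+\alpha)},\sqrt{(1+\omega)/(2+\alpha)})$); combined with the range constraints $\overline u,\overline v\in[a,b]$ from part~(1) of the preceding lemma, this gives alternative (i).

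\textbf{Main obstacle.} The crux is the non-degeneracy of the limiting profile of $\partial_{x_n}(U,V)$ — ensuring that after passing to the limit along the translations one still has a pair that is \emph{strictly} signed, $\varphi>0>\psi$, rather than identically zero. The renormalization-plus-Harnack argument handles this, but it requires care: one must show the normalization constants stay bounded away from $0$ and $\infty$ (so the limit is nontrivial and finite), which uses the uniform gradient bounds \eqref{eq. gradient pointwise estimate 1 in cor}–\eqref{eq. gradient pointwise estimate 2 in cor} together with a quantitative lower bound on $\partial_{x_n}u$ coming from the fact that $u$ genuinely transitions between distinct limits at $x_n=\pm\infty$ (which in turn rests on the density/comparison estimates and Lemma~\ref{lem. range}). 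Everything else is a routine application of the machinery already assembled.
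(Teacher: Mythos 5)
There is a genuine gap, and it sits exactly at the point you flag as the ``main obstacle.'' Your plan is to obtain $(\varphi,\psi)$ as (renormalized) limits of $\bigl(\partial_{x_n}U^{(t)},\partial_{x_n}V^{(t)}\bigr)$ as $t\to+\infty$. First, the unnormalized limit is always identically zero: since $(u^{(t)},v^{(t)})\to(\overline{u},\overline{v})$ in $C^{k}_{\rm loc}$ and the limit profile is independent of $x_n$, one has $\partial_{x_n}u^{(t)}\to 0$ locally uniformly, so no Harnack argument can ``propagate strict positivity'' to that limit. Second, and more seriously, the renormalized limit does not live on the right space. After dividing by $c_t$ (say the value of $\partial_{x_n}u^{(t)}-\partial_{x_n}v^{(t)}$ at a reference point), the limit $(\widetilde\varphi,\widetilde\psi)$ is a function on the full $\mathbb{R}^{n}\times\mathbb{R}_{+}$ and in general depends genuinely on $x_n$: already for the one-dimensional Allen--Cahn profile, $u'(x_n+t)/u'(t)\to e^{-\lambda x_n}$, so the normalized limit records the exponential decay rate in $x_n$ rather than an $x_n$-independent function. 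But Corollary~\ref{cor. n=2 is more obvious}, applied to $(\overline{u},\overline{v})$ in dimension $n-1$, requires a pair $\varphi>0>\psi$ defined on $\mathbb{R}^{n-1}\times\mathbb{R}_{+}$ (traces on $\mathbb{R}^{n-1}$) satisfying the Schr\"odinger inequalities there; a pair depending on $x_n$ cannot be fed into it, and restricting it to a slice $\{x_n=\mathrm{const}\}$ destroys the equation because of the $\partial_{x_n}^{2}$ term. So your construction does not produce the object the lemma asserts, and the classification step collapses. (A smaller technical point: the Harnack inequality of Tan--Xiong type does not apply to $\partial_{x_n}U$ alone, since its Neumann data is only one-sidedly comparable to itself through the coupling term $W_{uv}\,\partial_{x_n}V$; one must work with the difference $\partial_{x_n}U-\partial_{x_n}V$, whose Neumann data is two-sidedly controlled thanks to the signs.)

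The missing idea is to transfer \emph{stability} rather than the derivative pair itself. The paper first shows that the monotone solution is stable (the linearized quadratic form $\mathcal{E}_t$ is nonnegative, using $(\partial_{x_n}u^{(t)},\partial_{x_n}v^{(t)})$ as a signed kernel pair), passes this nonnegativity to the limit $t\to+\infty$, and then reduces the dimension by testing with products $\xi(x',y)Z_R(x_n)$: since the limiting potential is $x_n$-independent, stretching the cutoff in $x_n$ forces the $(n-1)$-dimensional form $\overline{\mathcal{E}}$ to be nonnegative as well. Only then is the pair $(\varphi,\psi)$ on $\mathbb{R}^{n-1}\times\mathbb{R}_{+}$ constructed, variationally, as a limit of principal eigenfunctions of $\overline{\mathcal{E}}$ on growing boxes (with eigenvalues $\lambda_R\ge 0$, whence the inequalities of the lemma), normalized via the difference $\varphi_R-\psi_R$, with Harnack and Schauder estimates giving compactness and a maximum-principle argument giving the strict signs. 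Your proposal skips this quadratic-form machinery, and without it there is no route from the $n$-dimensional monotonicity to an $(n-1)$-dimensional supersolution pair for $(\overline{u},\overline{v})$.
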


\begin{proof}
    We consider a family of functionals related to the translated pair $(u^{(t)},v^{(t)})$ in \eqref{eq. translated pair}. We also denote
    \begin{equation*}
        W^{(t)}=W(u^{(t)},v^{(t)}).
    \end{equation*}
    Let $(\xi,\eta)$ be a pair of smooth functions defined in $\overline{\mathbb{R}^{n+1}_{+}}$ with compact support, and let
    \begin{equation}\label{eq. E_t xi eta s<1}
        \mathcal{E}_{t}(\xi,\eta):=\int_{\mathbb{R}^{n+1}_{+}}y^{1-2s}(|\nabla\xi|^{2}+|\nabla\eta|^{2})dxdy+\int_{\mathbb{R}^{n}}\begin{bmatrix}\xi\\\eta\end{bmatrix}^{T}D^{2}W^{(t)}\begin{bmatrix}\xi\\\eta\end{bmatrix}dx.
    \end{equation}

\textbf{Step 1: Positivity of $\mathcal{E}_{t}(\xi,\eta)$.} 
We first prove that $\mathcal{E}_{t}(\xi,\eta)$ is nonnegative. 
Recall that, by the monotonicity assumption, 
\[
    \frac{\partial u^{(t)}}{\partial x_{n}} > 0 > \frac{\partial v^{(t)}}{\partial x_{n}},
\]
and that the pair 
\(
\big(\tfrac{\partial u^{(t)}}{\partial x_{n}}, \tfrac{\partial v^{(t)}}{\partial x_{n}}\big)
\)
satisfies the Schr\"odinger-type system:
\[
    \begin{cases}
        -\displaystyle\lim_{y\to0}\big(y^{1-2s}\partial_{y}\tfrac{\partial u^{(t)}}{\partial x_{n}}\big)
        + W_{uu}^{(t)}\tfrac{\partial u^{(t)}}{\partial x_{n}} 
        + W_{uv}^{(t)}\tfrac{\partial v^{(t)}}{\partial x_{n}} = 0, \\[0.8em]
        -\displaystyle\lim_{y\to0}\big(y^{1-2s}\partial_{y}\tfrac{\partial v^{(t)}}{\partial x_{n}}\big)
        + W_{vu}^{(t)}\tfrac{\partial u^{(t)}}{\partial x_{n}} 
        + W_{vv}^{(t)}\tfrac{\partial v^{(t)}}{\partial x_{n}} = 0,
    \end{cases}
\]
as established in Lemma~\ref{lem. derivative satisfies the Schrodinger equation}.  

We now set
\[
    (\varphi^{(t)}, \psi^{(t)}) := \Big(\tfrac{\partial u^{(t)}}{\partial x_{n}}, \tfrac{\partial v^{(t)}}{\partial x_{n}}\Big),
    \qquad
    (\sigma, \tau) := \Big(\tfrac{\xi}{\varphi^{(t)}}, \tfrac{\eta}{\psi^{(t)}}\Big).
\]
Then $(\sigma, \tau)$ is a smooth pair with compact support in $\overline{\mathbb{R}^{n+1}_{+}}$.  
With this notation at hand, an integration by parts combined with the equations satisfied by $(\varphi^{(t)}, \psi^{(t)})$ yields 
 \begin{align*}
        \mathcal{E}_{t}(\xi,\eta)
        =&\int_{\mathbb{R}^{n+1}_{+}}y^{1-2s}\Big\{|\varphi^{(t)}\nabla\sigma|^{2}+\nabla\varphi^{(t)}\cdot\nabla(\sigma^{2}\varphi^{(t)})+|\psi^{(t)}\nabla\tau|^{2}+\nabla\psi^{(t)}\cdot\nabla(\tau^{2}\psi^{(t)})\Big\}dxdy\\
        &+\int_{\mathbb{R}^{n}}\Big\{W_{uu}^{(t)}\Big(\varphi^{(t)}\sigma\Big)^{2}+W_{vv}^{(t)}\Big(\psi^{(t)}\tau\Big)^{2}+2W_{uv}^{(t)}\Big(\varphi^{(t)}\sigma\Big)\Big(\psi^{(t)}\tau\Big)\Big\}dx\\
        =&\int_{\mathbb{R}^{n+1}_{+}}y^{1-2s}\Big\{|\varphi^{(t)}\nabla\sigma|^{2}+|\psi^{(t)}\nabla\tau|^{2}\Big\}dxdy\\
        &-\int_{\mathbb{R}^{n+1}_{+}}\Big\{\sigma^{2}\varphi^{(t)}\partial_y(y^{1-2s}\nabla\varphi^{(t)})+\tau^{2}\psi^{(t)}\partial_y(y^{1-2s}\nabla\psi^{(t)})\Big\}dxdy\\
        &-\int_{\mathbb{R}^{n}}\Big\{\sigma^{2}\varphi^{(t)}\lim_{y\to0}(y^{1-2s}\nabla\varphi^{(t)})+\tau^{2}\psi^{(t)}\lim_{y\to0}(y^{1-2s}\nabla\psi^{(t)})\Big\}dx\\
        &+\int_{\mathbb{R}^{n}}\Big\{W_{uu}^{(t)}\Big(\varphi^{(t)}\sigma\Big)^{2}+W_{vv}^{(t)}\Big(\psi^{(t)}\tau\Big)^{2}+2W_{uv}^{(t)}\Big(\varphi^{(t)}\sigma\Big)\Big(\psi^{(t)}\tau\Big)\Big\}dx\\
        =&\int_{\mathbb{R}^{n+1}_{+}}y^{1-2s}\Big\{|\varphi^{(t)}\nabla\sigma|^{2}+|\psi^{(t)}\nabla\tau|^{2}\Big\}dxdy-\int_{\mathbb{R}^{n}}(\sigma\tau)^{2}W_{uv}^{(t)}\varphi^{(t)}\psi^{(t)}dx\geq0.
    \end{align*}
where in the last inequality we have used the positivity property in \eqref{eq. W_uv>0}.

\medskip

\textbf{Step 2: Passing to the limit $t \to +\infty$.}  
We now define $\mathcal{E}_{+\infty}(\xi,\eta)$ analogously to \eqref{eq. E_t xi eta s<1}, with $D^{2}W^{(t)}$ replaced by
\[
    D^{2}W^{(+\infty)} := D^{2}W(u^{(+\infty)}, v^{(+\infty)}),
\]
where
\[
    u^{(+\infty)}(x',x_n,y) := \overline{u}(x',y), 
    \qquad
    v^{(+\infty)}(x',x_n,y) := \overline{v}(x',y).
\]
Since we have already shown that $\mathcal{E}_{t}(\xi,\eta) \ge 0$, we may pass to the limit to conclude that 
\[
    \mathcal{E}_{+\infty}(\xi,\eta) \ge 0
\]
for every compactly supported smooth pair $(\xi,\eta)$ defined on $\overline{\mathbb{R}^{n+1}_{+}}$.  

Next, we define
\[
    D^{2}\overline{W} := D^{2}W(\overline{u}, \overline{v}),
\]
which is now viewed as a matrix-valued function on $\mathbb{R}^{n-1}$.  
For any compactly supported pair $(\xi, \eta)$ in $\mathbb{R}^{n-1} \times \mathbb{R}_{+}$, we introduce
\[
    \overline{\mathcal{E}}(\xi,\eta)
    := \int_{\mathbb{R}^{n-1}\times\mathbb{R}_{+}} 
        y^{1-2s} \big(|\nabla\xi|^{2} + |\nabla\eta|^{2}\big) \, dx'\,dy
       + \int_{\mathbb{R}^{n-1}}
        \begin{bmatrix}\xi \\ \eta\end{bmatrix}^{\!T}
        D^{2}\overline{W}
        \begin{bmatrix}\xi \\ \eta\end{bmatrix} \, dx'.
\]

\medskip
\textbf{Step 3: Positivity of $\overline{\mathcal{E}}(\xi,\eta)$.} 
We now show that $\overline{\mathcal{E}}(\xi,\eta)\geq0$ for any compactly supported pair $(\xi,\eta)$ in $\mathbb{R}^{n-1}\times\mathbb{R}_{+}$. Suppose, on the contrary, that there exists a compactly supported smooth pair $(\xi,\eta)=\big(\xi(x',y),\eta(x',y)\big)$ such that $\overline{\mathcal{E}}(\xi,\eta)<0$. Denote
\begin{equation}\label{eq. -T and diameter}
    \overline{\mathcal{E}}(\xi,\eta)=-\mathcal{T}<0,\qquad 
    \mathcal{S}:=\big(\operatorname{supp}(\xi)\cup \operatorname{supp}(\eta)\big)\Subset\mathbb{R}^{n-1},\qquad 
    \rho:=\sup_{x'\in\mathcal{S}}|x'|.
\end{equation}
Let $Z_{R}(x_{n})\in C^{\infty}(\mathbb{R})$ be a cut-off function satisfying
\begin{equation*}
    0\leq Z_{R}(x_{n})\leq1,\quad 
    Z_{R}(x_{n})\equiv1 \text{ for } |x_{n}|\leq R,\quad 
    Z_{R}(x_{n})\equiv0 \text{ for } |x_{n}|\geq R+1,\quad 
    |\nabla Z_{R}|\leq C.
\end{equation*}
Define
\begin{equation*}
    \big(\xi_{R}(x,y),\eta_{R}(x,y)\big):=\big(\xi(x',y)Z_{R}(x_{n}),\,\eta(x',y)Z_{R}(x_{n})\big),
\end{equation*}
which is a compactly supported pair in $\mathbb{R}^{n+1}_{+}$. Since 
$$\mathcal{E}_{+\infty}(\xi_{R},\eta_{R})\geq0 \,\, \mbox{for all} \,\, R\geq1,$$
we can estimate, for sufficiently large $R$,
\begin{align*}
    \mathcal{E}_{+\infty}(\xi_{R},\eta_{R})
    =&\int_{-R}^{R}\Bigg\{
        \int_{\mathbb{R}^{n-1}\times\mathbb{R}_{+}}
            y^{1-2s}\big(|\nabla\xi|^{2}+|\nabla\eta|^{2}\big)\,dx'dy
        +\int_{\mathbb{R}^{n-1}}
            \begin{bmatrix}\xi\\[2pt]\eta\end{bmatrix}^{T}
            D^{2}\overline{W}
            \begin{bmatrix}\xi\\[2pt]\eta\end{bmatrix}
        dx'
    \Bigg\}dx_{n}\\
    &+\Bigg(\int_{-R-1}^{-R}+\int_{R}^{R+1}\Bigg)
        \int_{\mathbb{R}^{n-1}\times\mathbb{R}_{+}}
            y^{1-2s}\big(|\nabla\xi_{R}|^{2}+|\nabla\eta_{R}|^{2}\big)\,dx'dx_{n}dy\\
    &+\Bigg(\int_{-R-1}^{-R}+\int_{R}^{R+1}\Bigg)
        \int_{\mathbb{R}^{n-1}}
            \begin{bmatrix}\xi_{R}\\[2pt]\eta_{R}\end{bmatrix}^{T}
            D^{2}\overline{W}
            \begin{bmatrix}\xi_{R}\\[2pt]\eta_{R}\end{bmatrix}
        dx'dx_{n}\\
    \leq& -2R\mathcal{T}+C(\xi,\eta)<0,
\end{align*}
which contradicts $\mathcal{E}_{+\infty}(\xi_{R},\eta_{R})\geq0$. Here, the constant $C(\xi,\eta)$ depends only on $\|\xi\|_{C^{0,1}(\mathbb{R}^{n-1}\times\mathbb{R}_{+})}$, $\|\eta\|_{C^{0,1}(\mathbb{R}^{n-1}\times\mathbb{R}_{+})}$, and the radius $\rho$ defined in \eqref{eq. -T and diameter}.

\medskip

    \textbf{Step 4: Construction of $(\varphi,\psi)$.} In this step, we intend to find two entire functions $\varphi>0>\psi$ defined on $\mathbb{R}^{n-1}\times\mathbb{R}_{+}$, still of class $\mathcal{C}$, such that the following Schr\"odinger inequality holds:
    \begin{equation}\label{eq. varphi and psi Schrodinger inequality}
        \left\{\begin{aligned}
            &-\lim_{y\to0}(y^{1-2s}\varphi_{y})+\overline{W}_{uu}\varphi+\overline{W}_{uv}\psi\geq0,\\
            &-\lim_{y\to0}(y^{1-2s}\psi_{y})+\overline{W}_{vu}\varphi+\overline{W}_{vv}\psi\leq0.
        \end{aligned}\right.
    \end{equation}

Since $\overline{\mathcal{E}}$ is positive semi-definite for all compactly supported pairs $(\xi,\eta)$, we consider the following eigenvalue problem. For a large radius $R$, we seek minimizers of the functional
\begin{equation}\label{eq. eigenvalue functional}
    \frac{\overline{\mathcal{E}}(\xi,\eta)}{\displaystyle\int_{\mathbb{R}^{n-1}}(|\xi|^{2}+|\eta|^{2})\,dx'},
    \qquad 
    (\xi,\eta)\in C^{\infty}_{0}([-R,R]^{n-1}\times[0,R]),
\end{equation}
whose value is necessarily non-negative. Without loss of generality, we impose the normalization
\begin{equation}\label{eq. eigen normalized}
    \int_{[-R,R]^{n-1}\times[0,R]} y^{1-2s}\big(|\nabla\xi|^{2}+|\nabla\eta|^{2}\big)\,dx'dy = 1.
\end{equation}
By Lemma~\ref{lem. trace}, condition \eqref{eq. eigen normalized} implies that the trace of $(\xi,\eta)$ on $\mathbb{R}^{n-1}$ satisfies
\begin{equation}\label{eq. gagliardo bound}
    \Big\|\xi\Big|_{\mathbb{R}^{n-1}}\Big\|_{H^{s}(\mathbb{R}^{n-1})}
    +\Big\|\eta\Big|_{\mathbb{R}^{n-1}}\Big\|_{H^{s}(\mathbb{R}^{n-1})}
    \leq C.
\end{equation}

If $(\xi_{k}, \eta_{k})$ is a minimizing sequence of the functional \eqref{eq. eigenvalue functional} satisfying in addition the normalization condition \eqref{eq. eigen normalized}, then this sequence converges weakly in $H^{1}([-R,R]^{n-1}\times[0,R],\, y^{1-2s}\,dx'dy)$. Moreover, by applying the Rellich theorem to the traces $(\xi_{k}, \eta_{k})\big|_{\mathbb{R}^{n-1}}$ on $\mathbb{R}^{n-1}$ and using the Gagliardo norm bound \eqref{eq. gagliardo bound}, we deduce that a subsequence of $(\xi_{k}, \eta_{k})\big|_{\mathbb{R}^{n-1}}$ converges strongly in $L^{2}([-R,R]^{n-1})$.

Denote by $(\varphi_{R},\psi_{R})$ the limit of this subsequence, and let $\lambda_{R}$ be the infimum of the functional \eqref{eq. eigenvalue functional}. Then $\lambda_{R}$ decreases with $R$, and hence $\lambda_{R}$ is uniformly bounded for all $R\geq100$. Furthermore,
\begin{equation*}
    \frac{\overline{\mathcal{E}}(\varphi_{R},\psi_{R})}
    {\displaystyle\int_{\mathbb{R}^{n-1}}(|\varphi_{R}|^{2}+|\psi_{R}|^{2})\,dx'}
    = \inf_{(\xi,\eta)\in C^{\infty}_{0}([-R,R]^{n-1}\times[0,R])}
      \frac{\overline{\mathcal{E}}(\xi,\eta)}
      {\displaystyle\int_{\mathbb{R}^{n-1}}(|\xi|^{2}+|\eta|^{2})\,dx'}
    = \lambda_{R}.
\end{equation*}
Since $\overline{W}_{uv}>0$ (see \eqref{eq. W_uv>0}), it is advantageous to replace $(\varphi_{R},\psi_{R})$ with $(|\varphi_{R}|,-|\psi_{R}|)$, which does not increase the quotient \eqref{eq. eigenvalue functional}. Hence, without loss of generality, we may assume that
\begin{equation*}
    \varphi_{R}\geq0\geq\psi_{R}
    \quad\text{in }[-R,R]^{n-1}\times[0,R],
    \qquad
    \varphi_{R}=\psi_{R}=0
    \quad\text{outside }[-R,R]^{n-1}\times[0,R].
\end{equation*}
The Euler-Lagrange equations associated with the minimizers $(\varphi_{R},\psi_{R})$ are
\begin{align}\label{eq. varphi_R and psi_R equation}
    &\left\{
    \begin{aligned}
        &\mathrm{div}(y^{1-2s}\nabla\varphi_{R})=0,\\
        &\mathrm{div}(y^{1-2s}\nabla\psi_{R})=0
    \end{aligned}
    \right.
   \quad  \text{in }[-R,R]^{n-1}\times[0,R],\\[4pt]
    &\left\{
    \begin{aligned}
        &-\lim_{y\to0}(y^{1-2s}\partial_{y}\varphi_{R})
          +\overline{W}_{uu}\varphi_{R}
          +\overline{W}_{uv}\psi_{R}
          =\lambda_{R}\varphi_{R},\\
        &-\lim_{y\to0}(y^{1-2s}\partial_{y}\psi_{R})
          +\overline{W}_{vu}\varphi_{R}
          +\overline{W}_{vv}\psi_{R}
          =\lambda_{R}\psi_{R}
    \end{aligned}
    \right.
    \quad \text{on }[-R,R]^{n-1}.
\end{align}

Since $\varphi_{R}\geq0\geq\psi_{R}$, the difference
\[
    \mathcal{D}_{R}:=\varphi_{R}-\psi_{R}
\]
is nonnegative in $\mathbb{R}^{n-1}\times\mathbb{R}_{+}$ and not identically zero in $[-R,R]^{n-1}\times[0,R]$. Subtracting the equations in \eqref{eq. varphi_R and psi_R equation} yields
\begin{equation*}
    \left\{
    \begin{aligned}
        &\mathrm{div}(y^{1-2s}\nabla\mathcal{D}_{R})=0,\\
        &\Big|\lim_{y\to0}(y^{1-2s}\partial_{y}\mathcal{D}_{R})\Big|\leq C\mathcal{D}_{R},
    \end{aligned}
    \right.
    \qquad 
    C:=100\max_{R\geq100}\lambda_{R}
      +100\|D^{2}\overline{W}\|_{L^{\infty}(\mathbb{R}^{2})}.
\end{equation*}
By the Harnack principle (see \cite{TX11}), $\mathcal{D}_{R}$ must be strictly positive in $B_{R}$. We normalize
\[
    \mathcal{D}_{R}(0)=\varphi_{R}(0)-\psi_{R}(0)=1.
\]
Applying the Harnack inequality to $\mathcal{D}_{R}$ and Schauder estimates to $(\varphi_{R},\psi_{R})$, we obtain the following uniform bounds:  
there exists a constant $C_{\rho}$ depending only on $\rho$, such that for every $\rho\geq100$ and $R\geq2\rho$,
\begin{itemize}
    \item[(1)] $C_{\rho}^{-1}\leq\mathcal{D}_{R}(x)\leq C_{\rho}$ for all $x\in[-\rho,\rho]^{n-1}\times[0,\rho]$;
    \item[(2)] $\|(\varphi_{R},\psi_{R})\|_{C^{\alpha}([-\rho,\rho]^{n-1}\times[0,\rho])}\leq C_{\rho}$;
    \item[(3)] $\|(y^{1-2s}\partial_{y}\varphi_{R},\,y^{1-2s}\partial_{y}\psi_{R})\|_{C^{\alpha}([-\rho,\rho]^{n-1}\times[0,\rho])}\leq C_{\rho}$.
\end{itemize}
In (2)-(3), we have used $\varphi_{R}\geq0\geq\psi_{R}$ to control $\|\varphi_{R}\|_{L^{\infty}}$ and $\|\psi_{R}\|_{L^{\infty}}$ by $\|\mathcal{D}_{R}\|_{L^{\infty}}$, and then applied the Schauder estimates (see \cite[Lemma~4.5]{CS14}).

Passing to the limit $R\to\infty$ via a diagonal argument, there exists a pair $(\varphi,\psi)$ such that, up to a subsequence,
\begin{itemize}
    \item[(1)] $(\varphi_{R},\psi_{R})\to(\varphi,\psi)$ in $C^{\alpha}([-\rho,\rho]^{n-1}\times[0,\rho])$,
    \item[(2)] $(y^{1-2s}\partial_{y}\varphi_{R},y^{1-2s}\partial_{y}\psi_{R})\to(y^{1-2s}\partial_{y}\varphi,y^{1-2s}\partial_{y}\psi)$ in $C^{\alpha}([-\rho,\rho]^{n-1}\times[0,\rho])$.
\end{itemize}
Since $\mathcal{D}_{R}\geq C_{\rho}^{-1}$ in each $B_{\rho}$ for large $R$, we have $\mathcal{D}_{\infty}:=\varphi-\psi>0$. Moreover, $\varphi\geq0\geq\psi$, and $(\varphi,\psi)$ satisfies the limiting problem:
\begin{align*}
    &\left\{
    \begin{aligned}
        &\mathrm{div}(y^{1-2s}\nabla\varphi)=0,\\
        &\mathrm{div}(y^{1-2s}\nabla\psi)=0
    \end{aligned}
    \right.
    \quad \text{in }\mathbb{R}^{n-1}\times\mathbb{R}_{+},\\[3pt]
    &\left\{
    \begin{aligned}
        &-\lim_{y\to0}(y^{1-2s}\partial_{y}\varphi)
          +\overline{W}_{uu}\varphi+\overline{W}_{uv}\psi
          =\lambda_{\infty}\varphi,\\
        &-\lim_{y\to0}(y^{1-2s}\partial_{y}\psi)
          +\overline{W}_{vu}\varphi+\overline{W}_{vv}\psi
          =\lambda_{\infty}\psi
    \end{aligned}
    \right.
    \quad\text{on }\mathbb{R}^{n-1},
\end{align*}
where $\lambda_{\infty}:=\mathop{\lim}\limits_{R\to\infty}\lambda_{R}\geq0$.  
Thus, the inequalities in \eqref{eq. varphi and psi Schrodinger inequality} hold.

Finally, we show that $\varphi$ and $\psi$ are both nowhere zero. Suppose, for contradiction, that $\varphi(x'_{*},y)=0$ for some $(x'_{*},y)\in\mathbb{R}^{n-1}\times\overline{\mathbb{R}_{+}}$. Then necessarily $y=0$, by the strong maximum principle for $\mathrm{div}(y^{1-2s}\nabla\varphi)=0$.  
Using the positivity of $\overline{W}_{uv}$, the negativity of $\psi$, and the boundedness of $\overline{W}_{uu}$, we obtain
\begin{equation*}
    -\lim_{y\to0}(y^{1-2s}\partial_{y}\varphi)
    \geq -\overline{W}_{uu}\varphi
    \geq -\|\overline{W}_{uu}\|_{L^{\infty}(\mathbb{R}^{2})}\varphi.
\end{equation*}
 As we have assumed $\varphi(x'_{*},0)=0$ for some $x'_{*}\in\mathbb{R}^{n-1}$, then it follows from the Harnack principle (see \cite{TX11}) that $\varphi\equiv0$ in $\mathbb{R}^{n-1}\times\mathbb{R}_{+}$. As a result, $\psi(x')\equiv-\mathcal{D}_{\infty}(x')>0$ everywhere in $\mathbb{R}^{2}$, and thus
    \begin{equation*}
        -\lim_{y\to0}(y^{1-2s}\partial_{y}\varphi)\geq-\overline{W}_{uv}\psi>0\mbox{ everywhere in }\mathbb{R}^{n-1}.
    \end{equation*}
This contradicts the earlier conclusion that $\varphi\equiv 0$, and therefore we must have $\varphi>0$ in $\mathbb{R}^{n-1}$. By a similar argument, we also deduce that $\psi<0$ in $\mathbb{R}^{n-1}$.
\medskip

\textbf{Step 5: Classification of the limiting profile.}  
We now describe the properties of the limiting profile $(\overline{u},\overline{v})$. Using the pair $(\varphi,\psi)$ constructed in Step 4, we apply Corollary~\ref{cor. n=2 is more obvious} to $(\overline{u},\overline{v})$.  

From this, we conclude that either $(\overline{u},\overline{v})$ is constant, or it depends only on $(x_{n-1},y)$, up to a rotation in $\mathbb{R}^{n-1}$.  

In the first case, the only constant positive solutions are
\begin{equation*}
    (\overline{u},\overline{v})\equiv(a,b), \quad (b,a), \quad \text{or} \quad \Big(\sqrt{\frac{1+\omega}{2+\alpha}},\,\sqrt{\frac{1+\omega}{2+\alpha}}\Big).
\end{equation*}

In the second case, the monotonicity of $(\overline{u},\overline{v})$ with respect to $x_{n-1}$ also follows directly from Corollary~\ref{cor. n=2 is more obvious}.
\end{proof}

One can naturally expect that Lemma~\ref{lem. classification of one limiting profile} also applies to the other limiting profile $(\underline{u},\underline{v})$. Accordingly, we have the following description of the behavior of both limiting profiles.

\begin{lemma}\label{lem. classification of two limiting profiles}
    Assume that one of the following two cases holds:
    \begin{itemize}
        \item Case A: $\displaystyle\frac{1}{2}\leq s<1$ and $n=3$,
        \item Case B: $\displaystyle0<s<\frac{1}{2}$ and $n=2$.
    \end{itemize}
    Let $(\overline{u},\overline{v})$ and $(\underline{u},\underline{v})$ be the two limiting profiles as defined in Definition~\ref{def. limiting profile}. If $\displaystyle\frac{\partial u}{\partial x_{n}}>0>\frac{\partial v}{\partial x_{n}}$ in $\mathbb{R}^{n+1}_{+}$,
    then at least one of the following occurs:
    \begin{itemize}
        \item Case 1: $(\overline{u},\overline{v})\equiv(b,a)$.
        \item Case 2: $(\underline{u},\underline{v})\equiv(a,b)$.
        \item Case 3: Up to a rotation, $(\overline{u},\overline{v})$ depends only on $(x_{n-1},y)$ and $\displaystyle\frac{\partial\overline{u}}{\partial x_{n-1}}>0>\frac{\partial\overline{v}}{\partial x_{n-1}}$.
        \item Case 4: Up to a rotation, $(\underline{u},\underline{v})$ depends only on $(x_{n-1},y)$ and $\displaystyle\frac{\partial\underline{u}}{\partial x_{n-1}}>0>\frac{\partial\underline{v}}{\partial x_{n-1}}$.
    \end{itemize}
    In Cases 1 and  3, we have that the Ginzburg-Landau energy of the pair $(\overline{u},\overline{v})$ in $\mathbb{R}^{n-1}\times\mathbb{R}_{+}$, as defined in \eqref{eq. Ginzburg-Landau energy, s<1}, satisfies the growth rate estimate below (here, $B_{R}'$ represents the disc in $\mathbb{R}^{2}$ with radius $R$) for all $R\geq100$:
    \begin{equation}\label{eq. limiting profile GL energy growth}
        J(\overline{u},\overline{v},[-R,R]^{n-1}\times[0,R])\leq\left\{\begin{aligned}
            &C R^{1-2s},&\mbox{if }&0<s<\frac{1}{2}\mbox{ and }n=2,\\
            &C R\ln{R},&\mbox{if }&s=\frac{1}{2}\mbox{ and }n=3,\\
            &C R,&\mbox{if }&\frac{1}{2}<s\leq1\mbox{ and }n=3.
        \end{aligned}\right.
    \end{equation}
    
    Similarly, in Cases 2 and 4, the same estimate holds for the pair $(\underline{u},\underline{v})$.
\end{lemma}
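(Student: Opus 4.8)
\medskip
\noindent\emph{Proof strategy.} The plan is to combine the one-profile classification of Lemma~\ref{lem. classification of one limiting profile} with the monotone ordering relating the two limiting profiles, and then to feed the surviving one-dimensional profiles back into the energy-evolution estimate of Lemma~\ref{lem. evolution of E(t)}.

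First I would apply Lemma~\ref{lem. classification of one limiting profile} to $(\overline u,\overline v)$. Since the system \eqref{eq. main} is invariant under interchanging $u\leftrightarrow v$ together with the reflection $x_n\mapsto-x_n$ — a transformation sending $(u,v)$ to another solution with the same monotonicity signs whose $x_n\to+\infty$ limiting profile is $(\underline v,\underline u)$ — the same classification applies to $(\underline u,\underline v)$. Hence each limiting profile is either one of the three positive constant solutions $(a,b)$, $(b,a)$, $(c,c)$ with $c:=\sqrt{(1+\omega)/(2+\alpha)}$, or, up to a rotation in $\mathbb{R}^{n-1}$, a bounded monotone solution depending only on $(x_{n-1},y)$ with $\partial_{x_{n-1}}\overline u>0>\partial_{x_{n-1}}\overline v$ (and correspondingly for the other profile). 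If $(\overline u,\overline v)$ is of this reduced type we are in Case~3; if $(\underline u,\underline v)$ is, in Case~4. So it remains to treat the situation in which both profiles are constant.

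Here I would use the ordering. Lifting $\partial_{x_n}u>0>\partial_{x_n}v$ to the extensions, for fixed $(x',y)$ each map $x_n\mapsto u,v$ is strictly monotone, so $\underline u\le\overline u$ and $\underline v\ge\overline v$ pointwise, and the two constant profiles are distinct (equality would force a component to be constant in $x_n$). Running through the distinct ordered pairs drawn from $\{(a,b),(b,a),(c,c)\}$, every pair with $(\underline u,\underline v)\ne(a,b)$ and $(\overline u,\overline v)\ne(b,a)$ violates one of the two inequalities above — each such case collapses to $b\le a$, using only $a<b$. Hence $(\underline u,\underline v)\equiv(a,b)$ (Case~2) or $(\overline u,\overline v)\equiv(b,a)$ (Case~1), which establishes the four-case alternative.

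For the energy estimate, Cases~1 and~2 are immediate: $W(a,b)=W(b,a)=0$ because $a^2+b^2=1$ and $ab=\omega/\alpha$, and the Caffarelli--Silvestre extension of a constant is that constant, so the relevant Ginzburg--Landau energy vanishes identically. In Cases~3 and~4, say we are in Case~3; being monotone in $x_{n-1}$, the profile $(\overline u,\overline v)$ carries the sign-definite kernel element $(\partial_{x_{n-1}}\overline u,\partial_{x_{n-1}}\overline v)$ of the Schr\"odinger system \eqref{eq. Schrodinger of derivative}, so the quadratic form $\overline{\mathcal E}$ built from $D^2W(\overline u,\overline v)$ is nonnegative (this is Steps~1 and~3 of the proof of Lemma~\ref{lem. classification of one limiting profile}). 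The limits $\lim_{x_{n-1}\to\pm\infty}(\overline u,\overline v)$ are positive constant solutions, and they cannot equal $(c,c)$: since $D^2W(c,c)$ has a strictly negative eigenvalue (in the direction $(1,-1)$, so $(c,c)$ is a saddle of $W$), a test pair supported on a long $x_{n-1}$-slab on which $(\overline u,\overline v)$ is close to $(c,c)$, and stretched out in the remaining variables and in $y$, makes $\overline{\mathcal E}<0$ — its Dirichlet part, normalized by the $L^2$-mass of the trace, tends to $0$ under the stretching while its potential part tends to a negative constant — contradicting $\overline{\mathcal E}\ge0$. Together with $\partial_{x_{n-1}}\overline u>0$ this forces the limits to be $(a,b)$ at $-\infty$ and $(b,a)$ at $+\infty$, both zeros of $W$. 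Consequently the reduced profile is a bounded monotone solution in dimension $n-1$ whose limiting profile along $x_{n-1}$ is a $W$-zero constant, of vanishing energy; rerunning Lemma~\ref{lem. evolution of E(t)} for it (its proof uses only global bounds, Schauder estimates and the monotonicity, hence is dimension-free) bounds $\int_{\mathbb R}|dE/dt|$ by $CR^{(n-1)-2s}$, $CR^{n-2}\ln R$, or $CR^{n-2}$ according to $s<\tfrac12$, $s=\tfrac12$, $s>\tfrac12$, and since the limiting constant contributes zero energy this also bounds $J(\overline u,\overline v,[-R,R]^{n-1}\times[0,R])$; for the admissible pairs $(n,s)$ these exponents are exactly the rates in \eqref{eq. limiting profile GL energy growth}. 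The statement for $(\underline u,\underline v)$ follows from the $u\leftrightarrow v$, $x_n\mapsto-x_n$ symmetry. I expect the main obstacle to be precisely the exclusion of $(c,c)$ as an endpoint of the one-dimensional profile in Cases~3--4 — without it the potential part of the energy would grow linearly, defeating the claimed rates — and this rests on the stability (nonnegativity of $\overline{\mathcal E}$) inherited from monotonicity together with the instability of the saddle $(c,c)$, implemented through the nonlocal test-function scaling; everything else is bookkeeping with tools already established.
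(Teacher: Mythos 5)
Your proposal is correct, and its first half (the four-case alternative) is essentially the paper's argument: apply the one-profile classification to each limit and then use the strict ordering $\underline{u}<\overline{u}$, $\underline{v}>\overline{v}$ together with $a<c<b$, $c=\sqrt{(1+\omega)/(2+\alpha)}$, to eliminate every constant-constant combination except Case 1 or Case 2. Where you genuinely diverge is the energy estimate in Cases 3--4. The paper does \emph{not} rule out $(c,c)$ as an endpoint of the reduced profile: it reduces to the one-dimensional pair $(U,V)$, reapplies the same ordering argument to the limits of $(U,V)$ itself to conclude that at least one endpoint is a zero of $W$ (either $(\overline{U},\overline{V})=(b,a)$ or $(\underline{U},\underline{V})=(a,b)$), and then gets the bound from $E(0)\le \lim_{H\to\infty}E(H)+\int_{\mathbb{R}}|dE/dt|\,dt$ using the one-dimensional version of Lemma~\ref{lem. evolution of E(t)}, since the energy at that single end vanishes. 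You instead exclude $(c,c)$ at \emph{both} ends through a stability argument: the nonnegativity of $\overline{\mathcal E}$ (the paper's Steps 1 and 3) versus the linear instability of the saddle $(c,c)$ (indeed $W_{uu}-W_{uv}=\tfrac{2(2\omega-\alpha)}{2+\alpha}<0$ there), tested on large sets inside the half-space where the profile is near $(c,c)$; this pins the endpoints as $(a,b)$ and $(b,a)$ and then Lemma~\ref{lem. evolution of E(t)} in dimension $n-1$ gives the same rates. Your route costs an extra instability lemma that the paper's bookkeeping avoids, but it yields slightly more (both asymptotic states identified). One caveat in your test-function step: if the support is a slab of \emph{fixed} width in $x_{n-1}$ and you stretch only in the remaining trace variables and in $y$, the weighted Dirichlet part does not become negligible relative to the trace $L^{2}$-mass (the $\partial_{x_{n-1}}$ contribution scales like $L^{-2}T^{2-2s}$ times the mass); you must let the support grow in the $x_{n-1}$-direction as well, which is harmless since the region where the profile is $\varepsilon$-close to $(c,c)$ is a full half-space, so large balls (with extension cut off at comparable height) give Dirichlet $\sim T^{\,n-1-2s}$ against mass $\sim T^{\,n-1}$ and the contradiction with $\overline{\mathcal E}\ge 0$ follows.
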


\begin{proof}
\textbf{Step 1: Classification of the four cases.} 
We first assume that both Case 3 and Case 4 fail and aim to show that at least one of Case 1 or Case 2 must hold. By Lemma~\ref{lem. classification of one limiting profile}, this implies that both $(\overline{u},\overline{v})$ and $(\underline{u},\underline{v})$ are constant solutions.  

Suppose, in addition, that Case 1 also fails. Then we must have
\begin{equation*}
    (\overline{u},\overline{v}) \equiv \Big(\sqrt{\frac{1+\omega}{2+\alpha}}, \sqrt{\frac{1+\omega}{2+\alpha}}\Big) \quad \text{or} \quad (\overline{u},\overline{v}) \equiv (a,b).
\end{equation*}
Since $\displaystyle \frac{\partial u}{\partial x_n} > 0 > \frac{\partial v}{\partial x_n}$, we deduce that $\underline{u} < \overline{u}$ and $\underline{v} > \overline{v}$. In particular,
\begin{equation*}
    \underline{u} < \max\Big\{\sqrt{\frac{1+\omega}{2+\alpha}}, a\Big\} = \sqrt{\frac{1+\omega}{2+\alpha}}, 
    \quad 
    \overline{v} > \min\Big\{\sqrt{\frac{1+\omega}{2+\alpha}}, b\Big\} = \sqrt{\frac{1+\omega}{2+\alpha}}.
\end{equation*}
The only constant solution remaining for $(\underline{u},\underline{v})$, according to Lemma~\ref{lem. classification of one limiting profile}, is therefore $(a,b)$. This completes the classification of the limiting profiles.

    \textbf{Step 2: Energy estimate.} 
Next, we prove the growth rate estimate \eqref{eq. limiting profile GL energy growth} for the Ginzburg-Landau energy. The estimates in Case 1 and Case 2 are trivial, as both the infinitesimal Dirichlet energy and the potential energy vanish identically. The more challenging cases are Case 3 and Case 4; we focus on Case 3, noting that Case 4 follows by the same argument.

In Case 3, Lemma~\ref{lem. classification of one limiting profile} implies that
\begin{equation*}
    (\overline{u}(x',y), \overline{v}(x',y)) = (U(x_{n-1},y), V(x_{n-1},y))
\end{equation*}
for some one-dimensional extended solution $(U(t,y), V(t,y))$ satisfying the monotonicity
\begin{equation}\label{eq. one-D monotone}
    \partial_t U(t,y) > 0 > \partial_t V(t,y).
\end{equation}

To prove \eqref{eq. limiting profile GL energy growth}, it suffices to reduce the problem to the $x_1$-direction in the case $n=3$ (corresponding to $\frac{1}{2}\leq s<1$) and show that the one-dimensional pair $(U,V)$ satisfies
    \begin{equation*}
        J(U,V,[-R,R]\times[0,R])\leq\left\{\begin{aligned}
            &C R^{1-2s},&\mbox{if }&0<s<\frac{1}{2},\\
            &C\ln{R},&\mbox{if }&s=\frac{1}{2},\\
            &C,&\mbox{if }&\frac{1}{2}<s\leq1.
        \end{aligned}\right.
    \end{equation*}
More precisely, we need to establish  
    \begin{equation}\label{eq. 1 d energy bound}
        \int_{0}^{R}\int_{-R}^{R}\frac{|\nabla U|^{2}+|\nabla V|^{2}}{2}dtdy+\int_{-R}^{R}W(U,V)dt\leq\left\{\begin{aligned}
            &C R^{1-2s},&\mbox{if }&0<s<\frac{1}{2},\\
            &C\ln{R},&\mbox{if }&s=\frac{1}{2},\\
            &C,&\mbox{if }&\frac{1}{2}<s\leq1.
        \end{aligned}\right.
    \end{equation}

The idea is to apply Lemma~\ref{lem. evolution of E(t)}. For the one-dimensional pair $(U,V)$ with monotonicity \eqref{eq. one-D monotone}, we define the limiting profiles $(\overline{U},\overline{V})$ and $(\underline{U},\underline{V})$ as in Definition~\ref{def. limiting profile}. Since these profiles are defined on $\mathbb{R}^{0}\times \mathbb{R}_+ = \{0\} \times \mathbb{R}_+$, they must be constant and satisfy the system \eqref{eq. simplified extension equation} in $\{0\} \times \mathbb{R}_+$. Consequently, the only possible values are
\begin{equation*}
    (\overline{U},\overline{V}), (\underline{U},\underline{V}) \in \Big\{ (a,b), (b,a), \Big(\sqrt{\frac{1+\omega}{2+\alpha}}, \sqrt{\frac{1+\omega}{2+\alpha}}\Big) \Big\}.
\end{equation*}

Following the discussion in Step 1, at least one of the following must hold:
\begin{itemize}
    \item Case (i): $(\overline{U},\overline{V}) = (b,a)$,
    \item Case (ii): $(\underline{U},\underline{V}) = (a,b)$.
\end{itemize}

Without loss of generality, assume Case (i) holds. Let $H$ be sufficiently large and send $H \to \infty$. Then
\begin{equation*}
    \lim_{H\to+\infty} E(H) = 
    \lim_{H\to+\infty} \Bigg\{ \int_0^R \int_{H-R}^{H+R} \frac{|\nabla U|^2 + |\nabla V|^2}{2} \, dt \, dy
    + \int_{H-R}^{H+R} W(U,V) \, dt \Bigg\} = 0.
\end{equation*}
Applying the one-dimensional version of Lemma~\ref{lem. evolution of E(t)} to $(U,V)$ then establishes \eqref{eq. 1 d energy bound}. This completes Step 2 of the proof of Lemma~\ref{lem. classification of two limiting profiles}.
\end{proof}

\section{Proof of Lemma~\ref{lem. energy growth} and Theorem~\ref{thm. main theorem}}
Finally, we prove Lemma~\ref{lem. energy growth} and Theorem~\ref{thm. main theorem}.

\begin{proof}[Proof of Lemma~\ref{lem. energy growth}]
By Lemma~\ref{lem. classification of two limiting profiles}, we may assume without loss of generality that either Case 1 or Case 3 holds. Consequently, the estimate \eqref{eq. limiting profile GL energy growth} is valid.  

Applying Lemma~\ref{lem. evolution of E(t)} in the case $n\leq3$ with $s\geq\frac{1}{2}$, or in the case $n\leq2$ with $s<\frac{1}{2}$, we obtain
\begin{equation}\label{eq. proof of main, energy bound}
J(u,v,[-R,R]^{n}) 
\leq \int_{-\infty}^{\infty} \left| \frac{dE}{dt} \right| dt + R \cdot J(\overline{u},\overline{v},[-R,R]^{2}) 
\leq C R^{2}\ln{R}.
\end{equation}

For sufficiently large $R$, the same estimate holds when the cube $[-R,R]^{n}$ is replaced by the ball $B_{R}$. This completes the proof of Lemma~\ref{lem. energy growth}.
\end{proof}

\begin{proof}[Proof of Theorem~\ref{thm. main theorem}]
We argue similarly as in Corollary~\ref{cor. n=2 is more obvious}.  
We set
\begin{equation*}
    (\varphi,\psi):=\left(\frac{\partial u}{\partial x_{n}},\frac{\partial v}{\partial x_{n}}\right), 
    \qquad
    (\xi_{i},\eta_{i}):=\left(\frac{\partial u}{\partial x_{i}},\frac{\partial v}{\partial x_{i}}\right), 
    \quad i<n.
\end{equation*}
By Lemma~\ref{lem. derivative satisfies the Schrodinger equation} together with Lemma~\ref{lem. quadratic form Q is negative semi-definite}, we deduce that for
\begin{equation*}
    (\sigma_{i},\tau_{i})
    :=\left(\frac{\xi_{i}}{\varphi},\frac{\eta_{i}}{\psi}\right),
\end{equation*}
there exists a strictly positive function $\mathcal{P}(x)$ such that
\begin{equation*}
    Q_{(\varphi,\psi)}
    \begin{bmatrix}
        \sigma_{i}\\
        \tau_{i}
    \end{bmatrix}
    \;\leq\; -(\sigma_{i}-\tau_{i})^{2}\,\mathcal{P}(x)
    \qquad \text{everywhere in }\mathbb{R}^{n}.
\end{equation*}

In view of the growth rate estimate for the Ginzburg-Landau energy established in Lemma~\ref{lem. energy growth}, we can apply Lemma~\ref{lem. Liouville-type result with cutoff function}. This yields that $(\sigma_{i},\tau_{i})$'s are all constants for $i<n$. Moreover, $\sigma_{i}=\tau_{i}$ for $i<n$.  

Finally, by repeating the argument used in Corollary~\ref{cor. n=2 is more obvious}, we conclude that the pair $(u,v)$ is one-dimensional. By rotating the space, we may assume that now $(u,v)$ depends only on $x_n$. Its limiting profiles $(\overline{u},\overline{v})$ and $(\underline{u},\underline{v})$ must then be constant solutions, and $(\overline{u},\overline{v}) = (b,a)$, $(\underline{u},\underline{v}) = (a,b)$. Here, we remark that the limiting profiles cannot be $(\sqrt{\frac{1+\omega}{2+\alpha}},\sqrt{\frac{1+\omega}{2+\alpha}})$, otherwise it violates Step 3 of the proof of Lemma~\ref{lem. classification of one limiting profile}.
\end{proof}

\vspace{2mm}
\noindent \textbf{Acknowledgments.}
Wu is partially supported by National Natural Science Foundation of China (Grant No. 12401133) and the Guangdong Basic and Applied Basic Research Foundation (2025B151502069).

\vspace{2mm}
\noindent \textbf{Conflict of interest.} The authors do not have any possible conflicts of interest.

\vspace{2mm}

\noindent \textbf{Data availability statement.}
 Data sharing is not applicable to this article, as no data sets were generated or analyzed during the current study.

% \bibliographystyle{abbrv} 
% \parskip=0pt
% \small
% \bibliography{main}

%\bibliographystyle{plain}

 %\bibliographystyle{abbrv}
  %\bibliography{main}

\end{document}